\definecolor{maroon(html/css)}{rgb}{0.5, 0.0, 0.0}\definecolor{cobalt}{rgb}{0.0, 0.28, 0.67}
\renewcommand{\epsilon}{\varepsilon}
\newcommand{\R}{{\mathbb R}}
\newcommand{\tc}{\operatorname{tc}}
\newcommand{\osp}{\operatorname{OSP}}
\DeclareMathOperator{\op}{op}
\newcommand{\cH}{{\mathcal H}}
\newcommand{\cF}{{\mathcal F}}
\newcommand{\T}{{\bf T}}
\def \R{\mathbb{R}}
\def \/{\big/}
\def \S{\mathbf{S}}
\def \Span{\textup{span}}
\newcommand{\upto}{\raisebox{1.2 mm}{$\curvearrowright$}}
\newcommand{\downto}{\mathbin{\rotatebox[origin=c]{-180}{$\curvearrowleft$}}}
\declaretheorem[numberwithin=section]{theorem}
\declaretheorem[sibling=theorem]{question}
\declaretheorem[sibling=theorem]{proposition}
\declaretheorem[sibling=theorem]{lemma}
\declaretheorem[sibling=theorem]{corollary}
\declaretheorem[sibling=theorem]{problem}
\declaretheorem[sibling=theorem]{definition}
\declaretheorem[sibling=theorem, style=definition]{example}
\declaretheorem[sibling=theorem, style=definition]{remark}
\theoremstyle{plain}
\theoremstyle{definition}
\newtheorem*{Theorem*}{Theorem}
\newtheorem*{Corollary*}{Corollary}
\newenvironment{varthm*}[1]{\begin{list}{}{\labelwidth=0cm
			\leftmargin=0cm
			\listparindent=\parindent}\item[\hspace{\labelsep}\bfseries
		#1.]\itshape}{\end{list}}
\title{When Alcoved Polytopes Add}
\subjclass[2020]{Primary: 52B40. Secondary: 52B05, 51M20.}
\author{Nick Early}
\address{Nick Early, Institute for Advanced Study, USA}
\email{\href{mailto:earlnick@ias.edu}{earlnick@ias.edu}}
\author{Lukas K\"uhne}
\address{Lukas K\"uhne, Universit\"at Bielefeld, Germany and Institute for Advanced Study, USA}
\email{\href{mailto:lkuehne@math.uni-bielefeld.de}{lkuehne@math.uni-bielefeld.de}}
\author{Leonid Monin}
\address{Leonid Monin, École Polytechnique Fédérale de Lausanne (EPFL), Switzerland}
\email{\href{mailto:leonid.monin@epfl.ch}{leonid.monin@epfl.ch}}
\begin{document}
	
\begin{abstract}
Alcoved polytopes are characterized by the property that all facet normal directions are parallel to the roots $e_i-e_j$.
Unlike other prominent families of polytopes, like generalized permutohedra, alcoved polytopes are not closed under Minkowski sums. 
We nonetheless show that the Minkowski sum of a collection of alcoved polytopes is alcoved if and only if each pairwise sum is alcoved.
This implies that the type fan of alcoved polytopes is determined by its two-dimensional cones.  
Moreover, we provide a complete characterization of when the Minkowski sum of alcoved simplices is again alcoved via a graphical criterion on pairs of ordered set partitions. 
Our characterization reduces to checking conditions on restricted partitions of length at most six. In particular, we show how the Minkowski sum decompositions of the two most well-known families of alcoved polytopes, the associahedron and the cyclohedron, fit in our framework.  Additionally, inspired by the physical construction of one-loop scattering amplitudes, we present a new infinite family of alcoved polytopes, called $\hat{D}_n$ polytopes.
We conclude by drawing a connection to matroidal blade arrangements and the Dressian. 

\noindent \textbf{Keywords.} Alcoved Polytope, Minkowski Sum, Type fan, Associahedron, Dressian.
\end{abstract}

\maketitle

	\begingroup
	\let\cleardoublepage\relax
	\let\clearpage\relax
	\tableofcontents
	\endgroup

\section{Introduction}
A polytope in $\cH_n=\{x_1+\dots + x_n=0\}\subset \R^n$  is \emph{alcoved} if all its facet normals are parallel to the roots $e_i-e_j$ for some $i\neq j\in \left[n\right]$.
Equivalently, a polytope is alcoved if it is determined by the parameters  $a_{i,j}\in \R$ for $1\le i,j,\le n$ via the equation $x_1+\dots +x_n=0$ and  the inequalities
\begin{equation}\label{eq:alcoved}
	x_i-x_j\le a_{i,j} \text{ for all }i,j\in \lbrack n \rbrack, i\neq j.
\end{equation}

Alcoved polytopes were introduced by Lam and Postnikov~\cite{lam2007alcoved} and appeared in different fields under different names.
They are known in the literature as \emph{polytropes} as they are tropical polytopes which are convex in the usual sense~\cite{JK10}.
Moreover, they are \emph{Lipschitz polytopes} (for non-symmetric finite metric spaces)~\cite{GP17,lipschitzpolytope}.
The class of alcoved polytopes includes order polytopes, hypersimplices, and the associahedron. In applications, alcoved polytopes play a key role in phylogenetics \cite{yoshida2019tropical}, mechanism design \cite{crowell2016tropical}, algebraic statistics \cite{Wasserstein}, scattering amplitudes  \cite{CEGM2019,Ear22}, positive configuration spaces \cite{arkani2021positive} and amplituhedra \cite{parisi2021m}, and building theory~\cite{JSY07}. Unbounded alcoved polyhedra were also considered under the names of shortest path polyhedra in \cite[Section 8.3]{schrijver2003combinatorial} or \cite{khachiyan2008generating}  and of weighted digraph polyhedra in \cite{joswig2016weighted}.

Unlike for instance generalized permutohedra, alcoved polytopes are not closed under Minkowski sums in general.
This naturally raises the question when alcoved polytopes add.
\begin{problem}\label{prob:sum_alcoved_polytopes}Let $P,Q\subseteq \cH_n$ be alcoved polytopes.
	When is the Minkowski sum $P+Q$ alcoved?

    We call the alcoved polytopes $P$ and $Q$ \emph{compatible} if their sum $P+Q$ is alcoved.
\end{problem}

Problem \ref{prob:sum_alcoved_polytopes} is intimately tied to classification of normal fans of alcoved polytopes and the general study of their \emph{type fan} following McMullen~\cite{McMullen}.
Suppose the parameters $a_{i,j}$ from~\eqref{eq:alcoved} minimally define an alcoved polytope.
They then satisfy the following triangle inequalities \cite{joswig2016weighted}:
\[
a_{i,j}+a_{j,k}\geq a_{i,k}, \text{ for all } i,j,k.
\]
The cone in $\mathbb{R}^{(n-1)n}$ defined by these inequalities has an internal fan structure given by the different normal fans of alcoved polytopes.
This fan is the \emph{type fan of alcoved polytopes}~$\cF_n$.
This fan was also studied in the context of tropical geometry and optimization in~\cite{joswig2022parametric,tran2017enumerating}.
It is furthermore closely related to the so-called \emph{resonance  arrangement} studied in \cite{early2018honeycomb,kuehne23}, see~\Cref{sec:all_subset}.
In this setting, two polytopes $P$ and $Q$ are compatible if and only if their corresponding points are part of one potentially larger cone in the fan $\cF_n$, see~\Cref{prop:compatibility_type_fan}.
Understanding the compatibility of alcoved polytopes is therefore equivalent to the study of the cone structure of the type fan of alcoved polytopes  $\cF_n$.

Problem \ref{prob:sum_alcoved_polytopes} reveals a rich combinatorial structure even if we restrict our attention to the subclass of alcoved simplices. In particular, several prominent alcoved polytopes such as the associahedron and the cyclohedron are Minkowski sums of alcoved simplices, see~\Cref{sec:examples}. Motivated by this, we are particularly interested in the question of compatibility of alcoved simplices which we answer completely. 

\subsection{Motivation from physics} Apart from interest from combinatorics, the study of alcoved polytopes and their Minkowski decompositions is motivated from recent geometric approaches to the study of scattering amplitudes in theoretical physics.
The main property making alcoved polytopes encode information about scattering amplitudes is that their faces exhibit factorization properties reminiscent of physical singularities.

In particular, the associahedron, which maps onto the connected components in the tiling of the configuration space $\mathcal{M}_{0,n}$, has appeared in string theory for decades. It appears as a natural compactification of configuration spaces appearing in the Koba-Nielsen formulation of string theory amplitudes.
More recently in current approaches to quantum field theory \cite{CHY}; it governs the singularity locus of the $\text{tr}(\phi^3)$ amplitude.
Moreover this amplitude satisfies certain important physical compatibility constraints on pairs of poles, called the Steinmann relations.  
The study of pairwise compatible collections of alcoved polytopes could therefore generalize the role of the associahedron in the Koba-Nielsen string integral and in the Cachazo-He-Yuan formalism~\cite{CHY} to arbitrary alcoved polytopes, suggesting new connections between polytope theory and scattering amplitudes.   
A more recent such instance is the so-called $\hat{D}$-polytope recently described in the physics literature~\cite{dhat}, see Theorem \ref{thm: assocCycDHat}.

These results are examples of more general interplays of combinatorics, geometry and theoretical physics which recently have led to remarkable joint developments. 
 For instance, the study of generalized biadjoint scalar amplitudes \cite{CEGM2019} has revealed deep connections between tropical geometry, cluster algebras, quantum affine algebras \cite{early2024tropical}, and the geometry of the positive and chirotropical Grassmannians \cite{cachazo2024color,antolini2024chirotropical}. These connections suggest that broader families of polytopes could encode fundamental physical principles.

Let us finish this subsection with two concrete questions motivated from physics on alcoved polytopes which we will address in future works.
\begin{itemize}
    \item {\bf Construction of binary geometries from families of alcoved polytopes.} Binary geometries are affine varieties with stratifications determined by certain simplicial complexes.
    Classical examples of binary geometry are the associahedra \cite{arkani2021stringy} and a more recent one are the pellytopes \cite{pellytopes}.
    To see more binary geometries constructed through alcoved polytopes, it is crucial to compute the so-called $u$-variables arising from their Minkowski decompositions.   
\item{\bf Defining alcoved amplitudes.} Certain scattering amplitudes may be presented as $\varepsilon\to 0$ limits of integrals of the following form  called \textit{stringy} integrals:
\[
\int_{\mathbb{R}^d_{>0}}\frac{dy}{y}\prod_{j=1}^d x_j^{s_j}\prod_f f(y)^{\varepsilon s_f},
\]
where $f(y)$ are some given irreducible polynomials, the $s_j$ and $s_f$ are real variables that satisfy the requirement that the origin in $\mathbb{R}^d$ is in the interior of the Newton polytope of the product.
In the case when the Minkowski sum of the Newton polytopes of $f$ is the ABHY associahedron in kinematic space, it produces the classical Koba-Nielsen string integral.
We would like to understand more cases when the Newton polytope above is an alcoved polytope.
\end{itemize}

\subsection{Results}
We now describe the main contributions and the structure of this article.
In~\Cref{sec:type_fan} we prove that the compatibility of alcoved polytopes can be checked on pairs:
\begin{varthm*}{Theorem A}\label{thm:A}
	Let $P_1,\dots,P_k$ be alcoved polytopes in $\cH_n$.
	Suppose $P_i$ and $P_j$ are pairwise compatible for all $i\neq j\in\left[n\right]$.
	Then the entire collection is compatible, i.e., $P_1+\dots+P_k$ is alcoved.
\end{varthm*}
This in particular means that the combinatorial structure of the type fan is completely determined by its $2$-dimensional cones, see~\Cref{thm:2_determined}.

After discussing the intersection of root cones in~\Cref{sec:root_cones}, we focus on alcoved simplices in~\Cref{sec:alcoved_simplices}.
As they are Minkowski indecomposable, they are among the rays of the type fan~$\cF_n$.
Up to translation and scaling, every alcoved simplex in~$\cH_n$ is characterized by an \emph{ordered set partition} of $\left[n\right]$, see~\Cref{prop:every_simplex}.

In~\Cref{sec:compatibility} we give a characterization for the compatibility of alcoved simplices.
\begin{varthm*}{Theorem B}\label{thm:B}
    Let $\S$ and $\T$ be two ordered set partitions of $[n]$ corresponding to the alcoved simplices $\Delta_\S$ and $\Delta_\T$ in $\cH_n$.
	The simplices $\Delta_\S$ and $\Delta_\T$ are compatible if and only if the simplices corresponding to the restricted partitions $\S|_I$ and $\T|_I$ are compatible for all  $I\subset [n]$ with $|I|\le 6$.
\end{varthm*}

Equivalently, this theorem says that two alcoved simplices are compatible if and only if their faces of dimension at most five are pairwise compatible.
Full-dimensional alcoved simplices are parametrized by cyclic permutations.
In this case, the theorem says that two full-dimensional alcoved simplices are compatible if and only if their permutations avoid three patterns, one of length four and two of length six, see~\Cref{rmk:pattern_avoidance} for details.

We prove this theorem in two steps.
First, we construct a graph associated to the simplices  $\Delta_\S$ and $\Delta_\T$ of the ordered set partitions $\S$ and $\T$ such that the simplices are compatible if and only if there is no cycle in this graph of a specific type.
Subsequently, we show that if the graph has such a cycle we can already find such a cycle on a subset of $\left[n\right]$ of size at most~$6$.

Finally, we would like to emphasize, that the reduction of compatibility of alcoved simplices to compatibility of restricted ordered set partitions in Theorem~\hyperref[thm:B]{B} is quite special.
For instance, the analogous question of when the intersection of two root subspaces, i.e., subspaces generated by roots, is again a root subspace does not satisfy such a reduction property, see Remark~\ref{rmk:non_reduction}.

In~\Cref{sec:examples} we discuss a number of prominent examples of alcoved polytopes and amongst others confirm that the $\hat{D}_n$-polytope is alcoved.
Lastly, we connect our results to \emph{matroidal blade arrangements} in~\Cref{sec:blades}.
These are arrangements of normal fans of alcoved simplices placed on the vertices of a hypersimplex.
The first author completely characterized when the normal fan of one fixed alcoved simplex at different vertices induces a matroidal subdivision of the hypersimplex.
Our characterization of compatible alcoved simplices complements these results as they characterize the matroidal subdivisions induced by different normal fans placed at one fixed vertex.
The common generalization of placing general blades at different vertices remains an exciting question for future research.

\section{The type fan of alcoved polytopes}\label{sec:type_fan}
In this section we give a proof of Theorem~\hyperref[thm:A]{A} and discuss its implications for the type fan of alcoved polytopes.
Let us start with the definitions.

\begin{definition}
    We call the vectors $e_{ij}:= e_i - e_j \in \cH_n \subset \R^n$ the roots of type $A$. 
    We say that a vector subspace $L\subset \cH_n$ is a \emph{root subspace} if it is spanned by roots.
\end{definition}

We start with a reformulation of alcoved polytopes in terms of their normal fans.
We denote the normal fan of a polytope $P$ by $\Sigma_P$.
A polytope~$P$ in the hyperplane $\cH_n$ is alcoved if the lineality space $L$ of the normal fan $\Sigma_P$ is a root subspace and the rays of the quotient fan $\Sigma_P/L$ are generated by roots.
This follows immediately from the definition of alcoved polytopes in~\eqref{eq:alcoved}.

The main technical result we need  to prove Theorem~\hyperref[thm:A]{A} is the following proposition. 

\begin{proposition}\label{prop:alcovedspaces}
    Let $L_1, \dots, L_k\subset \cH_n$ be root subspaces, such that $L_s\cap L_t$ is a root subspace for all $1\leq s,t\leq k$. Then $L_1\cap\dots\cap L_k$ is a root subspace.
\end{proposition}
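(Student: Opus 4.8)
The plan is to recast everything in the combinatorics of the partition lattice and reduce the statement to a single closure property that I prove by a parity argument on an auxiliary bipartite graph.

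First I would set up the standard dictionary between root subspaces and set partitions. To a partition $\pi$ of $[n]$ I associate $L_\pi = \{x\in\cH_n : \sum_{i\in B} x_i = 0 \text{ for every block } B \text{ of } \pi\}$, a root subspace of dimension $n-|\pi|$; conversely every root subspace is some $L_\pi$, where the blocks are the connected components of the graph whose edges index a spanning set of roots. This dictionary is order-preserving (refinement $\leftrightarrow$ inclusion). The point of the proof is the asymmetry between joins and meets: the sum $L_\pi + L_{\pi'}$ is always a root subspace, equal to $L_{\pi\vee\pi'}$ (taking the join merges the block-graphs), whereas for intersections only $L_{\pi\wedge\pi'}\subseteq L_\pi\cap L_{\pi'}$ holds in general. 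Computing $\dim(L_\pi\cap L_{\pi'}) = \dim L_\pi + \dim L_{\pi'} - \dim(L_\pi+L_{\pi'})$ and comparing with $\dim L_{\pi\wedge\pi'} = n-|\pi\wedge\pi'|$ shows that $L_\pi\cap L_{\pi'}$ is a root subspace if and only if $|\pi\wedge\pi'|+|\pi\vee\pi'| = |\pi|+|\pi'|$; I will call such a pair \emph{modular}. So compatibility of $L_s,L_t$ is exactly modularity of the associated partitions.

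Next I would give a graph-theoretic reading of modularity. To a pair $\pi,\pi'$ attach the bipartite graph $H(\pi,\pi')$ whose two vertex classes are the blocks of $\pi$ and of $\pi'$ and whose edges are the nonempty intersections (equivalently, the blocks of $\pi\wedge\pi'$). Splitting the modularity identity over the blocks $G$ of the join $\pi\vee\pi'$ and using that each restricted graph $H_G$ is connected, hence has at least (number of vertices $-1$) edges, I obtain that every summand of $\sum_G(\#\text{edges}-\#\text{vertices}+1)$ is nonnegative; the identity holds if and only if each summand vanishes, i.e.\ if and only if every $H_G$ is a tree. Thus $(\pi,\pi')$ is modular precisely when $H(\pi,\pi')$ is a \emph{forest}.

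The heart of the argument, and the step I expect to be the main obstacle, is the closure property: if $(\alpha,\tau)$ and $(\beta,\tau)$ are modular, then so is $(\alpha\wedge\beta,\tau)$. I would prove the contrapositive. A cycle in $H(\alpha\wedge\beta,\tau)$ runs through distinct $(\alpha\wedge\beta)$-blocks $D_0,\dots,D_{l-1}$ and distinct $\tau$-blocks $T_0,\dots,T_{l-1}$, with $T_j$ meeting $D_j$ and $D_{j+1}$. Writing each $D_j = A_j\cap B_j$ with $A_j$ its unique $\alpha$-block, the edges $D_j\cap T_j\neq\emptyset$ and $D_{j+1}\cap T_j\neq\emptyset$ survive to $A_j\sim T_j$ and $A_{j+1}\sim T_j$, so the cycle projects to a closed walk $A_0,T_0,A_1,T_1,\dots,A_{l-1},T_{l-1}$ in the forest $H(\alpha,\tau)$. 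The crux is now a parity observation: in a forest every edge is traversed an even number of times by any closed walk (deleting an edge separates its endpoints, and a closed walk crosses the cut an even number of times). Since the $T_j$ are distinct, each is visited exactly once, so the only steps that can traverse an edge incident to $T_j$ are the two moves $A_j\!\to\!T_j\!\to\!A_{j+1}$; evenness forces these to be the same edge, i.e.\ $A_j=A_{j+1}$ for every $j$, whence all $A_j$ are equal. Running the identical argument through $\beta$ gives all $B_j$ equal, so all $D_j=A_j\cap B_j$ coincide, contradicting their distinctness. Hence $H(\alpha\wedge\beta,\tau)$ is a forest.

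Finally I would assemble the result by induction on $k$. The case $k\le 2$ is the hypothesis. For the inductive step, the pairwise hypothesis gives that each $(\pi_s,\pi_k)$ is modular; iterating the closure property shows that $\sigma:=\pi_1\wedge\dots\wedge\pi_{k-1}$ is modular with $\pi_k$. By the induction hypothesis $L_{\pi_1}\cap\dots\cap L_{\pi_{k-1}} = L_\sigma$ (this intersection is a root subspace, and being contained in each $L_{\pi_s}$ and containing $L_\sigma$ it must equal $L_\sigma$). Modularity of $(\sigma,\pi_k)$ then yields $L_{\pi_1}\cap\dots\cap L_{\pi_k} = L_\sigma\cap L_{\pi_k} = L_{\sigma\wedge\pi_k} = L_{\pi_1\wedge\dots\wedge\pi_k}$, which is a root subspace, completing the proof.
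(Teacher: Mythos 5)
Your proof is correct, but it takes a genuinely different route from the paper's. The paper stays with graphs $\Gamma_L$ on the ground set $[n]$ (edges are the roots contained in $L$): it characterizes pairwise compatibility by requiring every cycle of length at least four in $\Gamma_L\cup\Gamma_M$ to have a chord (Lemma~\ref{lem:cycles}), proves a technical lemma saying that a strictly alternating cycle whose $\Gamma_M$-edges lie in distinct components forces $\Gamma_L$ to be complete on the cycle's vertices (Lemma~\ref{lem:complete_grpah}), and then settles $k=3$ by applying that lemma to both $L_1$ and $L_2$, the general case following by induction. You instead pass to the partition lattice: compatibility becomes the modular-pair identity $|\pi\wedge\pi'|+|\pi\vee\pi'|=|\pi|+|\pi'|$ via a one-line dimension count (replacing the cycle-space analysis of Lemmas~\ref{lem:spaceintersect} and~\ref{lem:cycles}), which you convert into acyclicity of the bipartite block-incidence graph $H(\pi,\pi')$; your crux is the closure property that $(\alpha,\tau)$ and $(\beta,\tau)$ modular imply $(\alpha\wedge\beta,\tau)$ modular, proved by parity of closed walks in forests, and this plays exactly the role of Lemma~\ref{lem:complete_grpah}. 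The two pairwise criteria are equivalent (a chordless alternating cycle in $\Gamma_L\cup\Gamma_M$ is the same data as a cycle in $H$), but the payoffs differ: your argument is more self-contained and elementary --- linear algebra plus one parity fact --- ties the statement to classical lattice theory, and yields the explicit identification $L_{\pi_1}\cap\dots\cap L_{\pi_k}=L_{\pi_1\wedge\dots\wedge\pi_k}$; the paper's formulation in terms of roots and chordless cycles is heavier here but is precisely what gets reused later, for the root-cone criterion of Theorem~\ref{thm:cone_criterion} and the violating-cycle criterion of Theorem~\ref{thm:violating_cycles}, which your block-level formalism would not directly support since those settings involve directed (non-subspace) data.
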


To prove Proposition~\ref{prop:alcovedspaces}, we introduce a graph $\Gamma_L$ which completely determines the root subspace $L$ (Definition~\ref{def:GL}).
Subsequently, we give a graphical criterion for the compatibility of root subspaces (Lemma~\ref{lem:cycles}).
However, let us first present a proof of Theorem~\hyperref[thm:A]{A} assuming Proposition~\ref{prop:alcovedspaces}.

\begin{proof}[Proof of Theorem~{\hyperref[thm:A]{A}}]
    The goal is to show that $P=P_1+\ldots+P_k$ is alcoved.
    We can assume that $P$ is full-dimensional. Indeed, the lineality space of $L_P$ of the normal fan $\Sigma_P$ of $P$ is the intersection of lineality spaces $L_{P_i}$ of normal fans $\Sigma_{P_i}$ of $P_i$. By the assumption, each pairwise intersection $L_{P_i}\cap L_{P_j}$ is a root space and thus $L_P$ is a root space, by  Proposition~\ref{prop:alcovedspaces}, and we can pass to the quotient of $\Sigma_P/L_P$ in our analysis.
    
    Let $F$ be a facet of $P$. 
    Hence, we can choose faces $F_s$ of $P_s$ for all $1\le s \le k$ such that $F=F_1+\ldots+F_k$.
    Therefore, the normal ray $\rho_F$ is the intersection of the normal cones $\sigma_1\cap\dots\cap\sigma_k$ where $\sigma_s$ is the normal cone of $F_s$ for all $1\le s \le k$.

    The ray $\rho_F$ is generated by a root if and only if the linear span of $\rho_F$ is generated by a root.
    Therefore we aim to work with linear spaces instead of cones.
    To this end we replace each $\sigma_s$ with the smallest face of $\sigma_s$ containing $\rho_F$.
    Thus, we obtain the equality
    \[
   \Span(\rho_F)= \bigcap_{s=1}^k \Span(\sigma_{s}).
    \]
     So it is enough to show that $\bigcap_{s=1}^k \Span(\sigma_{s})$ is a root subspace.

    But by the assumptions of the theorem we have that $\Span(\sigma_{s})$ as well as $\Span(\sigma_{s})\cap\Span(\sigma_{t})$ is generated by roots for every $1\le s,t,\le k$.
    Indeed, since $P_s$ and $P_s+P_t$ are alcoved, the rays of their normal fans are all in  root directions and hence, every cone (and its linear span) is generated by roots.
    Therefore, by Proposition~\ref{prop:alcovedspaces} we get that $ \bigcap_{s=1}^k \Span(\sigma_{s})$ is generated by roots, and thus $\rho_F$ is also a multiple of a root.
\end{proof}

To prove Proposition~\ref{prop:alcovedspaces} we first need to set up some notation.
\begin{definition}\label{def:GL}
    For a root subspace $L\subset \cH_n$ we define an undirected graph $\Gamma_L$ as follows
    \begin{itemize}
        \item[(1)] The vertices of $\Gamma_L$ are labeled by $\left[n\right]$.
        \item[(2)] The graph $\Gamma_L$ has an edge $\{i,j\}$ if $e_{ij}\in L$, note that as $L$ is a subspace, it contains the root $e_{ij}$ if and only if it contains the root $e_{ji}$.
    \end{itemize}
\end{definition}

Since $\cH_n =\{x_1+\dots+x_n=0\}$ is the root subspace of all roots, the graph $\Gamma_{\cH_n}$ is the complete graph on $n$ vertices.

\begin{lemma}\label{lem:cliques}
	Each connected component of $\Gamma_L$ is a complete graph.
\end{lemma}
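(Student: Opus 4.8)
The plan is to show that the adjacency relation of $\Gamma_L$, extended by the diagonal $i\sim i$, is an equivalence relation on $[n]$. Its equivalence classes are then exactly the connected components of $\Gamma_L$, and showing each class is a clique will be immediate once transitivity is established. The only input needed beyond the definition is that $L$ is a \emph{vector subspace}, hence closed under addition.

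The key observation is the telescoping identity $e_{ik}=e_{ij}+e_{jk}$, which holds because $(e_i-e_j)+(e_j-e_k)=e_i-e_k$. Since $L$ is closed under addition, whenever $e_{ij}\in L$ and $e_{jk}\in L$ we conclude $e_{ik}\in L$; this is precisely transitivity of the adjacency relation. Symmetry is already recorded in Definition~\ref{def:GL} (namely $e_{ij}\in L$ iff $e_{ji}\in L$), and reflexivity holds by convention for the diagonal. Thus adjacency-or-equality is an equivalence relation.

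To finish, I would fix a connected component $C$ and take distinct vertices $i,k\in C$. By connectivity there is a path $i=v_0,v_1,\dots,v_m=k$ in $\Gamma_L$, so each $e_{v_\ell v_{\ell+1}}\in L$. Summing telescopes to
\[
\sum_{\ell=0}^{m-1} e_{v_\ell v_{\ell+1}} = e_{v_0 v_m}=e_{ik},
\]
which therefore lies in $L$ because $L$ is a subspace. Hence $\{i,k\}$ is an edge of $\Gamma_L$, and since $i,k\in C$ were arbitrary, $C$ is a complete graph.

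I do not expect a genuine obstacle here; the statement is essentially the transitivity of the root relation. The one point to handle with care is that adjacency is defined through membership of a single root in $L$, whereas being in the same component only guarantees a path of arbitrary length. The telescoping sum above is exactly what bridges this gap, converting a long path into the single direct edge by invoking closure of $L$ under (iterated) addition.
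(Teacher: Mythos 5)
Your proof is correct and is essentially identical to the paper's: both arguments take a path $s=i_1,\dots,i_k=t$ within a connected component and use the telescoping identity $e_{i_1i_2}+\dots+e_{i_{k-1}i_k}=e_{st}$ together with closure of $L$ under addition to produce the direct edge. The extra framing via an equivalence relation is harmless but adds nothing beyond the telescoping step, which is the whole content of the paper's proof as well.
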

\begin{proof}
	Indeed let two vertices $s$ and $t$ be in the same connected component of $\Gamma_L$, we will show that there is an edge $\{s,t\}$ in $\Gamma_L$.
	
	Let $\{s=i_1,i_2\},\dots, \{i_{k-1},i_{k}=t\}$ be a path in $\Gamma_L$ connecting $s$ and $t$. By definition of $\Gamma_L$ this implies that $e_{i_ji_{j+1}}\in L$ for all $1\le j \le k-1$. But
	\[
	e_{i_1i_2}+e_{i_2i_3} + \ldots +  e_{i_{k-1}i_k}= e_{i_1i_k}=e_{st} \in L,
	\]
	so $\{s,t\}$ is an edge of $\Gamma_L$.
\end{proof}

Any linear combination of roots $\sum_{1\le i<j\le n} w_{ij}e_{ij}$ defines a weighted sum of oriented edges of $\Gamma_{\cH_n}$ by assigning the orientation $j\to i$ and weight $w_{ij}$ to every edge $\{i,j\}$.
Two such combinations $\sum_{1\le i<j\le n} w_{ij}e_{ij}$ and $\sum_{1\le i<j\le n} w'_{ij}e_{ij}$ give rise to the same element of~$\cH_n$ if and only if their difference $\sum_{1\le i<j\le n} (w_{ij}-w'_{ij})e_{ij}$ is zero.
This linear combination is zero if and only if it defines a linear combination of oriented cycles in $G_{\cH_n}$ where we view the edges with a negative weight as oriented as $i\to j$.

Now, let $L, M$ be two root subspaces within $\cH_n$.
By the above observation, an element $x$ is in the intersection $L\cap M$ if it can be simultaneously represented by a linear combination of oriented edges from $\Gamma_L$ and $\Gamma_M$.
The difference of these two representation is thus a linear combination of cycles supported on the edges $\Gamma_L\cup \Gamma_M$.

On the other hand, every oriented cycle $C$ supported on  $\Gamma_L\cup \Gamma_M$ yields an element of  $L\cap M$ by taking the linear combination 
\begin{equation}\label{eq:xc}
x_C=\sum_{\{i,j\}\in C\cap \Gamma_L}e_{ij}=-\sum_{\{s,t\}\in C\cap \Gamma_M}e_{st}.
\end{equation}
This leads to the following lemma:

\begin{lemma}\label{lem:spaceintersect}
    Let $L, M$ be two root subspaces within $\cH_n$.
    The intersection $L\cap M$ is generated by elements corresponding to oriented cycles in $\Gamma_L\cup \Gamma_M$ as in~\eqref{eq:xc}.

    In particular, if $L\cap M$ is a root subspace, we have $\Gamma_{L\cap M}=\Gamma_L\cap \Gamma_M$.
\end{lemma}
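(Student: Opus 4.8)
The plan is to prove the two assertions in turn, leaning on the edge/cycle dictionary developed just before the statement. For the generating claim I would establish both inclusions. The inclusion $\mathrm{span}\{x_C\}\subseteq L\cap M$ is exactly the ``on the other hand'' observation already recorded in \eqref{eq:xc}: each $x_C$ equals a sum of roots $e_{ij}$ with $\{i,j\}\in\Gamma_L$, hence lies in $L$, and simultaneously equals, up to sign, a sum of roots with edges in $\Gamma_M$, hence lies in $M$. Thus $x_C\in L\cap M$, and the span of all such elements sits inside $L\cap M$.

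For the reverse inclusion I would take an arbitrary $x\in L\cap M$. Since $L$ is a root subspace it is spanned by the roots it contains, namely $\{e_{ij}:\{i,j\}\in\Gamma_L\}$, so I can write $x=\sum_{\{i,j\}\in\Gamma_L}a_{ij}e_{ij}$; likewise $x=\sum_{\{i,j\}\in\Gamma_M}b_{ij}e_{ij}$ using $M$-edges. Subtracting the two representations produces a linear combination of roots supported on $\Gamma_L\cup\Gamma_M$ that vanishes in $\cH_n$. By the characterization of vanishing root-combinations recalled just above the lemma, this null combination decomposes as $\sum_C\lambda_C$ over oriented cycles $C$ on $\Gamma_L\cup\Gamma_M$. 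Reading off only the $\Gamma_L$-labeled part of each cycle returns precisely $x_C$ as in \eqref{eq:xc}; applying this ``$\Gamma_L$-projection'' to the whole combination recovers $\sum_{\{i,j\}\in\Gamma_L}a_{ij}e_{ij}=x$ on one side and $\sum_C\lambda_C x_C$ on the other. Hence $x=\sum_C\lambda_C x_C$, giving $L\cap M\subseteq\mathrm{span}\{x_C\}$ and completing the first assertion.

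The step requiring the most care is the bookkeeping just invoked: an edge lying in $\Gamma_L\cap\Gamma_M$ may be used in either representation, so when passing from the vanishing root-combination to genuine cycles one must really work on the disjoint union $\Gamma_L\sqcup\Gamma_M$, keeping two distinguishable copies of a common edge, and track orientations carefully. This is what guarantees that the partition of each cycle's edges into $C\cap\Gamma_L$ and $C\cap\Gamma_M$ in \eqref{eq:xc} is the intended one and that the two telescoping sums around the cycle are genuinely negatives of each other. I expect this labeling issue to be the only real subtlety; once it is fixed, the ``$\Gamma_L$-projection'' identity is routine linear algebra.

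Finally, for the ``in particular'' clause I would assume $L\cap M$ is a root subspace, so that $\Gamma_{L\cap M}$ is defined by Definition~\ref{def:GL}. Then $\{i,j\}$ is an edge of $\Gamma_{L\cap M}$ if and only if $e_{ij}\in L\cap M$, if and only if $e_{ij}\in L$ and $e_{ij}\in M$, if and only if $\{i,j\}$ is an edge of both $\Gamma_L$ and $\Gamma_M$; that is, $\Gamma_{L\cap M}=\Gamma_L\cap\Gamma_M$. This chain is purely definitional, the hypothesis serving only to make $\Gamma_{L\cap M}$ meaningful, while the substantive content of the lemma is the generating statement, which certifies that the cycle elements $x_C$ account for all of $L\cap M$.
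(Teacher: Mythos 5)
Your proof is correct and follows essentially the same route as the paper, which states this lemma as a direct consequence of the preceding discussion (representing an element of $L\cap M$ two ways, subtracting, and decomposing the vanishing combination into oriented cycles supported on $\Gamma_L\cup\Gamma_M$) without writing out a formal proof. Your write-up simply makes explicit the details the paper leaves implicit, including the correct handling of edges in $\Gamma_L\cap\Gamma_M$ via the disjoint union, and the purely definitional ``in particular'' clause.
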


In addition, a more careful analysis yields the following lemma.
Recall that a chord $c$ in a cycle $C$ is an edge connecting two vertices that are not adjacent in $C$.

\begin{lemma}\label{lem:cycles}
    Let $L, M$ be two root subspaces within $\cH_n$.
    The intersection $L\cap M$ is a root subspace if and only if every cycle of length at least four in $\Gamma_L\cup \Gamma_M$ has a chord.
\end{lemma}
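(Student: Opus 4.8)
The plan is to prove both implications through the graphical description of $L\cap M$ furnished by Lemma~\ref{lem:spaceintersect}: that $L\cap M$ is spanned by the cycle elements $x_C$, and that if $L\cap M$ happens to be a root subspace then its graph equals $\Gamma_L\cap\Gamma_M$. Throughout I would lean on Lemma~\ref{lem:cliques} in the following form: whenever two edges sharing a common vertex both lie in $\Gamma_L$ (resp.\ $\Gamma_M$), the third edge of the triangle they span also lies in $\Gamma_L$ (resp.\ $\Gamma_M$). Let $R$ denote the span of the roots $\{e_{ij}:\{i,j\}\in\Gamma_L\cap\Gamma_M\}$; since each such $e_{ij}$ lies in both $L$ and $M$, we always have $R\subseteq L\cap M$, and the whole game is to decide whether this inclusion is an equality.

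For the direction ``every long cycle has a chord $\Rightarrow$ root subspace,'' I would show every generator $x_C$ lies in $R$, which forces $L\cap M=R$. This is an induction on the length of $C$. The degenerate two-edge case (an edge in $\Gamma_L\cap\Gamma_M$) gives a single root of $R$ directly. For a triangle the clique property shows that as soon as two edges lie in the same graph, the third edge lies in both $\Gamma_L$ and $\Gamma_M$, and a one-line computation gives that $x_C$ is either $0$ or a single root $e_{ij}$ with $\{i,j\}\in\Gamma_L\cap\Gamma_M$. For $C$ of length at least four, the hypothesis supplies a chord $\{v_p,v_q\}\in\Gamma_L\cup\Gamma_M$ splitting $C$ into two strictly shorter cycles $C_1,C_2$ sharing this edge; assigning the chord to whichever of $\Gamma_L,\Gamma_M$ contains it and leaving all other edge-assignments untouched, the chord is traversed in opposite orientations in $C_1$ and $C_2$ and cancels, yielding $x_C=x_{C_1}+x_{C_2}$. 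The inductive hypothesis applied to $C_1$ and $C_2$ closes this direction.

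For the converse I would show that a chordless cycle $C$ of length $\ell\ge 4$ produces an element $x_C\in L\cap M\setminus R$. First, no two cyclically consecutive edges of $C$ lie in the same graph (else the clique property yields a chord), and no edge of $C$ lies in $\Gamma_L\cap\Gamma_M$ (such an edge would combine with one of its neighbours to force a chord); hence $\ell$ is even and the edges strictly alternate between $\Gamma_L\setminus\Gamma_M$ and $\Gamma_M\setminus\Gamma_L$, so that $x_C=-e_{v_1}+e_{v_2}-\dots+e_{v_\ell}$ has support exactly the cycle vertices. The crux is to show the distinct cycle vertices lie in pairwise distinct components of $\Gamma_L\cap\Gamma_M$: if two of them were joined by a path in $\Gamma_L\cap\Gamma_M$, then the clique property of $\Gamma_L$ and of $\Gamma_M$ separately would place an edge of $\Gamma_L\cap\Gamma_M$ between them, contradicting that no $\Gamma_L\cap\Gamma_M$-edge joins cycle vertices (consecutive ones lie in exactly one graph, non-consecutive ones in neither). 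Letting $K$ be the component of $\Gamma_L\cap\Gamma_M$ containing $v_1$, the functional $\phi_K(x)=\sum_{i\in K}x_i$ vanishes on every root $e_{ij}$ with $\{i,j\}\in\Gamma_L\cap\Gamma_M$, hence on all of $R$; but $\phi_K(x_C)=-1\ne 0$, since $v_1$ is the only cycle vertex in $K$. Were $L\cap M$ a root subspace, Lemma~\ref{lem:spaceintersect} would give $\Gamma_{L\cap M}=\Gamma_L\cap\Gamma_M$ and force $x_C\in R$, a contradiction.

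I expect the main obstacle to be the converse, and specifically the claim that the cycle vertices occupy pairwise distinct components of $\Gamma_L\cap\Gamma_M$: this is where chordlessness, the forced alternation, and the clique structure of the two individual graphs must all be combined, and it is exactly this separation that makes the witnessing functional $\phi_K$ available. By comparison, the forward direction is a routine chord-reduction induction once the triangle base case is verified.
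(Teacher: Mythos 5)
Your proof is correct, and one of its two directions genuinely improves on the paper's. The implication ``every long cycle has a chord $\Rightarrow$ root subspace'' is essentially the paper's argument: both proofs take the cycle generators $x_C$ from Lemma~\ref{lem:spaceintersect} and split long cycles along chords, writing $x_C=x_{C_1}+x_{C_2}$ and inducting down to short cycles whose elements are visibly roots (the paper first reduces to strictly alternating cycles and splits all the way to $2$-cycles, while you keep triangles as a base case --- an immaterial difference, though your explicit subspace $R$ spanned by the roots of $\Gamma_L\cap\Gamma_M$ makes the bookkeeping cleaner). In the converse direction the two proofs diverge. The paper assumes $C$ strictly alternating, writes $x_C$ as a sum of roots of $\Gamma_L\cap\Gamma_M$, and then asserts that ``this means that there is an edge $e\in C$ that appears both in $\Gamma_L$ and $\Gamma_M$''; as stated this step is a leap, since the roots appearing in such a sum need not be supported on edges of $C$ at all. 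Your argument replaces this assertion with a complete proof: chordlessness forces strict alternation, the clique property of $\Gamma_L$ and $\Gamma_M$ (Lemma~\ref{lem:cliques}) forces the cycle vertices into pairwise distinct components of $\Gamma_L\cap\Gamma_M$, and then the functional $\phi_K(x)=\sum_{i\in K}x_i$ attached to one component $K$ vanishes on $R$ but not on $x_C$, so $x_C\in (L\cap M)\setminus R$, contradicting $L\cap M=R$. This separation-of-components step plus the separating functional is the genuinely new ingredient; it buys rigor exactly where the paper is terse, at the cost of a somewhat longer argument, and it is the kind of component-counting that the paper only deploys later, in the proof of Lemma~\ref{lem:complete_grpah}.
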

\begin{proof}
	First suppose that $L\cap M$ is a root subspace.
	Let $C$ be a cycle of length at least four in $\Gamma_L\cup \Gamma_M$.
	If $C$ contains two consecutive edges of $\Gamma_L$ or of $\Gamma_M$ it contains a chord as the connected components of both graphs are complete graphs by~\Cref{lem:cliques}.
	Hence, we can assume that $C$ is strictly alternating, i.e., if an edge in $C$ is in $\Gamma_L$ the next one on $C$ must be in $\Gamma_M$ and not in $\Gamma_L$ and vice versa.
	By~\Cref{lem:spaceintersect} the intersection $L\cap M$ is generated by oriented cycles.
	As $L\cap M$ is assumed to be a root subspace, the element $x_C$ corresponding to the cycle $C$  can be written as a sum of cycles in $\Gamma_L\cup \Gamma_M$ each of length two as cycles of length two correspond to roots.
	This means that there is an edge $e\in C$ that appears both in $\Gamma_L$ and $\Gamma_M$.
	This however contradicts the assumption that $C$ is strictly alternating and thus every such cycle must have a chord.
	
	For the converse, assume that every cycle of length at least four in  $\Gamma_L\cup \Gamma_M$ has a chord.
	Choose an arbitrary element $x\in L\cap M$.
	By Lemma~\ref{lem:spaceintersect} $x$ is a linear combination of $x_{C_1},\dots, x_{C_k}$ for oriented cycles $C_1,\dots,C_k$ in $\Gamma_L\cup \Gamma_M$.
	Moreover as above, these cycles can be chosen to be strictly alternating.
	Suppose a cycle $C_i$ in this collection is of length at least four.
	By our assumption, $C_i$ thus has a chord.
	Splitting up the cycle $C_i$ along this chord into the two shorter cycles $C_i'$ and $C_i''$, we can write $x_{C_i}=x_{C_i'}+x_{C_i''}$.
	Iterating this process means that we can shorten the cycles whenever they are of length at least four.
	So in total we can assume that all cycles $C_1,\dots,C_k$ are of length two.
	As the point corresponding to a cycle of length two is a root this implies that $x$ is generated by roots.
	Hence, $L\cap M$ is a root subspace.
\end{proof}

We give one final technical lemma that we will use in the proof of Proposition~\ref{prop:alcovedspaces}.

\begin{lemma}\label{lem:complete_grpah}
	Suppose that $L,M\subseteq \cH_n$ are two root subspaces such that $L\cap M$ is a root subspace, too.
	Assume that there is a strictly alternating cycle $C$ in $\Gamma_L\cup \Gamma_M$ such that every edge in $C\cap \Gamma_M$ belongs to a different connected component of $\Gamma_M$.
	This implies that the restriction of $\Gamma_{L}$ to the vertices of $C$ is a complete graph.
\end{lemma}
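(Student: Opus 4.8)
The plan is to prove the a priori stronger statement that all vertices of $C$ lie in a single connected component of $\Gamma_L$; since each component of $\Gamma_L$ is a complete graph by \Cref{lem:cliques}, this is exactly the claimed completeness of $\Gamma_L$ restricted to $V(C)$. Write $C=(v_0,v_1,\dots,v_{2\ell-1})$ so that the edges of $C$ lying in $\Gamma_L$ are $\{v_{2k},v_{2k+1}\}$ and those in $\Gamma_M$ are $\{v_{2k+1},v_{2k+2}\}$ (indices mod $2\ell$), and set $P_k=\{v_{2k},v_{2k+1}\}$. The first thing I would record is that the distinct-component hypothesis makes the $\ell$ M-edges of $C$ the only edges of $\Gamma_M$ among the vertices of $C$: two vertices of $C$ lie in a common component of $\Gamma_M$ precisely when they are the endpoints of one of these $\ell$ M-edges. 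Consequently every chord of $C$ in $\Gamma_L\cup\Gamma_M$ must lie in $\Gamma_L$.

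I would then argue by induction on the length $2\ell$. When $2\ell=4$, \Cref{lem:cycles} (applicable since $L\cap M$ is a root subspace) produces a chord, necessarily an L-edge joining $P_0$ and $P_1$, and together with the two L-edges of $C$ this places all four vertices in one component of $\Gamma_L$. For $2\ell\ge 6$, \Cref{lem:cycles} again supplies a chord, an L-edge $\{v_a,v_b\}$ with $v_a\in P_i$, $v_b\in P_j$ and $i\ne j$. Transitivity through the L-pair edges puts $v_{2i},v_{2i+1},v_{2j},v_{2j+1}$ in one component of $\Gamma_L$, so both mixed edges $\{v_{2i+1},v_{2j}\}$ and $\{v_{2i},v_{2j+1}\}$ are L-edges. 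If $P_i$ and $P_j$ were cyclically adjacent, one of these mixed edges would coincide with an M-edge of $C$, contradicting strict alternation; hence $P_i,P_j$ are non-adjacent and both mixed edges are genuine chords. Taking $p=2i+1$ and $q=2j$, the arc $(v_p,\dots,v_q)$ closed by the chord $\{v_p,v_q\}$ is a strictly alternating cycle $C_A$, because its first and last arc-edges $e_p,e_{q-1}$ are M-edges and so match the L-chord at both junctions. As $C_A$ is shorter than $C$, still has its M-edges in distinct components, and inherits $L\cap M$ being a root subspace, the induction hypothesis shows that $V(C_A)$ lies in one component of $\Gamma_L$.

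To reach the remaining vertices I would contract the arc $A$. Since $v_p=v_{2i+1}$ and $v_q=v_{2j}$ are now L-equivalent and are joined to $v_{2i}$ and $v_{2j+1}$ by the L-edges $e_{2i}=\{v_{2i},v_{2i+1}\}$ and $e_{2j}=\{v_{2j},v_{2j+1}\}$, closing the complementary arc $(v_{2j+1},\dots,v_{2i})$ by the L-edge $\{v_{2i},v_{2j+1}\}$ yields a second strictly alternating cycle $C^*$ (its extreme arc-edges $e_{2j+1},e_{2i-1}$ are again M-edges), which is also strictly shorter than $C$. Induction gives $V(C^*)$ in one component of $\Gamma_L$, and the bridging L-edges $e_{2i}$ and $e_{2j}$ identify the components of $V(C_A)$ and $V(C^*)$. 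Hence all of $V(C)$ lies in one component of $\Gamma_L$, which is the desired conclusion.

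The step I expect to be the real obstacle is the contraction. Splitting $C$ along a single L-chord always leaves one of the two arcs non-alternating, since two L-edges then meet at each chord endpoint, so a naive ``split and recurse on both halves'' fails. The remedy is to first pass to the mixed-parity chord $\{v_{2i+1},v_{2j}\}$ (rather than the original chord, whose endpoints may share parity), which guarantees that the arc $A$ is genuinely alternating; then upgrade $A$ to a complete $\Gamma_L$-subgraph via the induction hypothesis; and only afterwards collapse $A$ to a single L-edge. It is exactly this collapse that restores strict alternation on the complementary cycle $C^*$ and allows the induction to close.
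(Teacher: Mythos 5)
Your proof is correct, and at the top level it follows the same skeleton as the paper's: get a chord from \Cref{lem:cycles}, observe that the distinct-component hypothesis forces every chord into $\Gamma_L$, settle length four directly via \Cref{lem:cliques}, and recurse on shorter cycles. Where you genuinely diverge is in how the recursion is closed. The paper argues by minimal counterexample and splits $C$ along the chord $e$ itself, asserting that one of the two resulting cycles is again ``of the same kind''; as you correctly observe, this step is problematic, because splitting along a single $L$-chord leaves at least one of the two halves non-alternating (two $\Gamma_L$-edges meet at a chord endpoint), and the minimality argument would additionally need the smaller cycle to inherit the non-completeness of the $\Gamma_L$-restriction, which the paper does not address. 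Your replacement — upgrade the chord to the two mixed-parity chords $\{v_{2i+1},v_{2j}\}$ and $\{v_{2i},v_{2j+1}\}$ via cliqueness of the $\Gamma_L$-components, note that strict alternation forces $P_i$ and $P_j$ to be cyclically non-adjacent so both mixed edges are genuine chords, split $C$ into the two strictly alternating cycles $C_A$ and $C^*$, apply the induction hypothesis to each, and glue the two resulting $\Gamma_L$-components along the bridging edges $\{v_{2i},v_{2i+1}\}$ and $\{v_{2j},v_{2j+1}\}$ — is exactly what is needed to make the induction close, and I verified each step (in particular that $C_A$ and $C^*$ are strictly alternating, strictly shorter, and retain the distinct-$\Gamma_M$-component property). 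A minor remark: for $2\ell=6$ any two of the three $L$-pairs are cyclically adjacent, so your non-adjacency conclusion shows the hypotheses are vacuous in that case; this is consistent (it is precisely the configuration of \Cref{rmk:non_reduction}, where $L\cap M$ fails to be a root subspace), and your proof remains valid since the conclusion follows from the hypotheses in all cases. In short: same strategy as the paper, but your execution of the inductive step is tighter and effectively repairs a gap in the paper's own write-up.
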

\begin{proof}
	For a contradiction, assume that there is a strictly alternating cycle $C$ in $\Gamma_L\cup \Gamma_M$ such that edges in $C\cup \Gamma_M$ belong to different connected components of $\Gamma_M$ and the restriction of  $\Gamma_L$ to the vertices of $C$ is not a complete graph.
	Suppose that $C$ is such a cycle of minimal length.
	
	As a cycle of length two is clearly complete on its vertices, $C$ must be of length at least four.
	By~\Cref{lem:cycles} and the assumption that $L\cap M$ is a root subspace, the cycle $C$ must have a chord~$e$ in $\Gamma_L\cup \Gamma_M$.
	
	As the edges in $C\cap \Gamma_M$ all belong to pairwise different connected components of  $\Gamma_M$, the chord $e$ cannot belong to $\Gamma_M$ but must be in $\Gamma_L$.
	If the cycle $C$ is of length four the chord $e$ thus connects the edges of $C\cap \Gamma_M$ which together with~\Cref{lem:cliques} implies that $C\cap \Gamma_M$ is a complete graph as claimed.
	
	If the cycle $C$ is of length at least six we can split up the cycle $C$ into two smaller cycles along the chord $e$ one of which is at least of length four and still satisfies the assumption on $\Gamma_M$.
	This contradicts our assumption on $C$ being the cycle of minimal length of this kind.
\end{proof}

Now we are finally ready to prove Proposition~\ref{prop:alcovedspaces}.
\begin{proof}[Proof of Proposition~\ref{prop:alcovedspaces}]
    It is enough to show the statement for $k=3$.
    Indeed assuming this case, the general case follows via induction after replacing $L_1,L_2$ by $L_1\cap L_2$.
    In the case $k=3$, let us define $L_{ij}:= L_i\cap L_j$ for $1\le i<j\le3$.
    Since by assumption $L_{ij}$ is a root subspace, we have $\Gamma_{L_{ij}} = \Gamma_{L_i}\cap \Gamma_{L_j}$ by~\Cref{lem:spaceintersect}.
    
	So our aim is to show that  $L_1\cap L_2 \cap L_3=L_{12}\cap L_3$ is a root subspace.
	To this end, assume that there is a cycle $C$ of length at least 4 in $\Gamma_{L_{12}}\cup \Gamma_{L_3}$.
	By Lemma~\ref{lem:cycles} we need to show that~$C$ has a chord in $\Gamma_{L_{12}}\cup \Gamma_{L_3}$.
	
	Assume that $C$ has no chord in $\Gamma_{L_{12}}\cup \Gamma_{L_3}$.
	Since the connected components of both $\Gamma_{L_{12}}$ and $\Gamma_{L_3}$ are cliques this means that the edges in $C$ alternate between $\Gamma_{L_{12}}$ and $\Gamma_{L_3}$.
	In other words, the cycle $C$ is of the form
	\[
	\{i_1,i_2\},\{i_2,i_3\},\dots,\{i_{2s},i_{2s+1}=i_1\},
	\]
	where $\{i_{2k-1},i_{2k}\}$ is an edge of $\Gamma_{L_3}$ and $\{i_{2k},i_{2k+1}\}$ is an edge of $\Gamma_{L_{12}}$ such that any two edges belong to different components of $\Gamma_{L_3}$  and $\Gamma_{L_{12}}$, respectively.
	
	Every edge in $\Gamma_{L_{12}}\cup \Gamma_{L_3}$ is also contained in $\Gamma_{L_{1}}\cup \Gamma_{L_3}$, thus $C$ is also a cycle in $\Gamma_{L_{1}}\cup \Gamma_{L_3}$.
	The assumption that $L_{13}$ is a root subspace together with~\Cref{lem:complete_grpah} implies that the restriction of $\Gamma_{L_1}$ to the vertices of $C$ is a complete graph.

    The same argument also applies to $\Gamma_{L_2}$ restricted to the vertices of $C$.
    Therefore, $\Gamma_{L_{12}}=\Gamma_{L_1}\cap \Gamma_{L_2}$ restricted to vertices of $C$ is also a complete graph which contradicts the assumption of $C$ being chordless in $\Gamma_{L_{12}}\cup \Gamma_{L_3}$.
\end{proof}

 Note that our proof of Theorem~\hyperref[thm:A]{A} does not use the boundedness of alcoved polytopes and thus is also applicable to the case of unbounded alcoved polyhedra (or weighed digraph polyhedra) which brings us to a more general result
 \begin{theorem}
     Let $P_1,\dots,P_k$ be (possibly unbounded) alcoved polyhedra in $\cH_n$.
	Suppose $P_i$ and $P_j$ are pairwise compatible for all $i\neq j\in\left[n\right]$.
	Then the entire collection is compatible, i.e., $P_1+\dots+P_k$ is alcoved.
 \end{theorem}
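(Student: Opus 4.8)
The plan is to observe that the argument given for Theorem~\hyperref[thm:A]{A} never invokes the compactness of the summands, so that the only task is to check that each of its geometric inputs survives the passage to unbounded polyhedra. Concretely, the proof of Theorem~\hyperref[thm:A]{A} rests on three ingredients: (i) the reformulation that a polyhedron in $\cH_n$ is alcoved precisely when the lineality space of its normal fan is a root subspace and the rays of the quotient fan are generated by roots; (ii) the description of the normal fan of a Minkowski sum as the common refinement of the normal fans of the summands, together with the fact that every face of the sum decomposes as a Minkowski sum of faces of the summands; and (iii) Proposition~\ref{prop:alcovedspaces}. The first thing I would emphasize is that ingredient (iii) is a purely linear-algebraic statement about root subspaces of $\cH_n$ and is entirely indifferent to whether a polytope or an unbounded polyhedron produced those subspaces, so it is available unchanged.

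First I would record that (i) persists for unbounded alcoved polyhedra, since it is read off directly from the defining inequalities $x_i-x_j\le a_{i,j}$ exactly as in the bounded case; allowing unboundedness merely amounts to retaining only a subset of these inequalities, which removes some facets and hence deletes some rays from the normal fan while leaving every remaining facet normal parallel to a root. Next I would verify (ii): for polyhedra $P_1,\dots,P_k$ with compatible recession cones, the normal fan of $P_1+\dots+P_k$ is still the common refinement $\Sigma_{P_1}\wedge\dots\wedge\Sigma_{P_k}$, its lineality space is the intersection $\bigcap_s L_{P_s}$, and each facet $F$ of the sum decomposes as a Minkowski sum $F=F_1+\dots+F_k$ of faces $F_s$ of $P_s$ whose normal cones intersect precisely in the normal ray $\rho_F$. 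These are standard facts in the theory of polyhedral fans and require no boundedness hypothesis.

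With (i)--(iii) in hand, I would then run the proof of Theorem~\hyperref[thm:A]{A} verbatim. Pairwise compatibility makes each intersection $L_{P_s}\cap L_{P_t}$ a root subspace, so by Proposition~\ref{prop:alcovedspaces} the lineality space $L_P=\bigcap_s L_{P_s}$ is a root subspace and we may pass to the quotient fan and assume $P$ is full-dimensional. For a facet $F=F_1+\dots+F_k$, replacing each normal cone $\sigma_s$ by its smallest face containing $\rho_F$ yields
\[
\Span(\rho_F)=\bigcap_{s=1}^k \Span(\sigma_s),
\]
and pairwise compatibility guarantees that each $\Span(\sigma_s)$ and each $\Span(\sigma_s)\cap\Span(\sigma_t)$ is a root subspace. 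Proposition~\ref{prop:alcovedspaces} then forces $\Span(\rho_F)$ to be a root subspace, so $\rho_F$ is a multiple of a root, and $P$ is alcoved.

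I expect the only genuine obstacle to be a careful formulation of ingredient (ii) in the unbounded setting: one must take the Minkowski sum among polyhedra whose recession cones are compatible, so that the common-refinement description of the normal fan and the facet decomposition $F=F_1+\dots+F_k$ both remain valid, and one must confirm that the normal fan of an unbounded polyhedron (supported on the dual of its recession cone) still interacts with Minkowski sums in the expected way. Once this bookkeeping is settled, the remainder of the argument is identical to the bounded case and the result is immediate.
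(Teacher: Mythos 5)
Your proposal is correct and takes essentially the same route as the paper: the paper establishes this theorem precisely by observing that its proof of Theorem~A never uses boundedness of the summands and hence applies verbatim to unbounded alcoved polyhedra. Your explicit verification of the three ingredients in the unbounded setting is exactly the implicit content of that observation, so the two arguments coincide.
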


\subsection{Consequences for the type fan of alcoved polytopes}\label{subsec:type_fan}

The cone of parameters $a_{i,j}$ from~\eqref{eq:alcoved} minimally defining an alcoved polytope is naturally subdivided into regions parametrized by the normal fans of alcoved polytopes.
Given two alcoved polytopes $P$ and $Q$ with the same normal fan defined by parameters $a_{i,j}$ and $b_{i,j}$, their Minkowski sum $P+Q$ has the same normal fan and is defined by the parameters $a_{i,j}+b_{i,j}$.
So each region of alcoved polytopes with fixed normal fan forms a cone in $\R^{(n-1)n}$ which is open in the relative topology. In particular, the subdivision by normal fans defines a fan structure on the cone of alcoved polytopes. We call this the \emph{type fan of alcoved polytopes} and denote it by $\cF_n$.

This fan structure also encodes the compatibility of alcoved polytopes as posed in Problem~\ref{prob:sum_alcoved_polytopes}.

\begin{proposition}\label{prop:compatibility_type_fan}
	Let $P,Q\subset \cH_n$ be two alcoved polytopes.
	Suppose $P$ and $Q$ are minimally defined by the parameters  $\mathbf{a}=(a_{i,j})_{1\le i,j\le n}$ and  $\mathbf{b}=(b_{i,j})_{1\le i,j\le n}$ via~\eqref{eq:alcoved} in the fan $\cF_n\subseteq \R^{(n-1)n}$, respectively.
	Then $P+Q$ is an alcoved polytope if and only if there exists a cone $C\in \cF_n$ such that both $\mathbf{a}$ and $\mathbf{b}$ are in $C$.
\end{proposition}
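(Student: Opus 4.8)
The plan is to recast everything through support functions and the additivity of Minkowski sums, and then read off the equivalence from the cone structure of $\cF_n$. Writing $h_P$ for the support function of $P$, the minimal parameters are exactly its values on roots, $a_{i,j}=h_P(e_i-e_j)$, and for any parameter vector $\mathbf c$ the inequalities \eqref{eq:alcoved} describe the polytope
\[
A(\mathbf c)=\{x\in\cH_n : \langle e_i-e_j,x\rangle\le c_{i,j}\ \text{for all }i\neq j\},
\]
which is alcoved by construction. Since $h_{P+Q}=h_P+h_Q$, the sum $P+Q$ and $A(\mathbf a+\mathbf b)$ share the same support values on all roots, so $P+Q\subseteq A(\mathbf a+\mathbf b)$ with equality if and only if every facet normal of $P+Q$ is a root, i.e.\ if and only if $P+Q$ is alcoved. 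Thus the first step reduces the statement to the single identity $P+Q=A(\mathbf a+\mathbf b)$.

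Next I would record the two structural inputs about $\cF_n$. Writing $\Sigma_C$ for the (alcoved) normal fan attached to the relative interior of a cone $C\in\cF_n$, a parameter vector $\mathbf c$ lies in $C$ precisely when $\Sigma_{A(\mathbf c)}$ is coarsened by $\Sigma_C$; equivalently, $C$ is the deformation (type) cone of $\Sigma_C$ in the sense of McMullen \cite{McMullen}. On such a cone the support-number parametrization is Minkowski-additive: if $\Sigma_C$ refines both $\Sigma_{A(\mathbf c)}$ and $\Sigma_{A(\mathbf d)}$, then $A(\mathbf c)+A(\mathbf d)=A(\mathbf c+\mathbf d)$. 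I also use the standard fact that the normal fan of a Minkowski sum is the common refinement, $\Sigma_{P+Q}=\Sigma_P\wedge\Sigma_Q$.

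For the direction ($\Leftarrow$), assume $\mathbf a,\mathbf b\in C$ for some $C\in\cF_n$. Convexity of $C$ gives $\mathbf a+\mathbf b\in C$, and additivity on the deformation cone of $\Sigma_C$ yields $P+Q=A(\mathbf a)+A(\mathbf b)=A(\mathbf a+\mathbf b)$; the right-hand side is alcoved, hence so is $P+Q$. For ($\Rightarrow$), assume $P+Q$ is alcoved and set $\Sigma:=\Sigma_{P+Q}=\Sigma_P\wedge\Sigma_Q$, which is then an alcoved fan refining both $\Sigma_P$ and $\Sigma_Q$. Taking $C$ to be the deformation cone of $\Sigma$---a cone of $\cF_n$ since $\Sigma$ is realized by $P+Q$---the refinement relations place both $\mathbf a$ and $\mathbf b$ in $C$, as required.

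The main obstacle I anticipate is justifying the two structural inputs, namely that each cone of $\cF_n$ is exactly the deformation cone of its associated alcoved fan and that passing from a polytope to a coarsening of its normal fan corresponds to passing to a face of that cone. Concretely, the delicate point in ($\Rightarrow$) is to guarantee that the locus $\{\mathbf c : \Sigma \text{ refines }\Sigma_{A(\mathbf c)}\}$ is a single convex cone of $\cF_n$, so that $\mathbf a$ and $\mathbf b$ genuinely share one cone rather than merely lying in two cones admitting a common coarsening; this is precisely where McMullen's type-cone formalism is needed, while everything else is bookkeeping with support functions.
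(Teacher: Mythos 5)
Your proposal is correct and takes essentially the same approach as the paper: both directions rest on the two facts that $\Sigma_{P+Q}$ is the common refinement of $\Sigma_P$ and $\Sigma_Q$, and that membership of a minimal parameter vector in a (closed) cone of $\cF_n$ is equivalent to that cone's fan refining the polytope's normal fan --- exactly the paper's observations (1) and (2). Your support-function packaging via $A(\mathbf{c})$ and the identity $P+Q=A(\mathbf{a}+\mathbf{b})$ is a cosmetic reformulation, and the deformation-cone property you flag as the delicate point is precisely what the paper also assumes (as its observation (2)) rather than proves.
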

\begin{proof}
	Let $P,Q\subseteq \cH_n$ be two alcoved polytopes minimally defined by the points $\mathbf{a}, \mathbf{b}\in \R^{(n-1)n}$, respectively.
	We begin with two observations which immediately follow from the definition of the normal fan of a polytope.
	\begin{enumerate}
		\item The normal fan of the Minkowski sum $P+Q$ is the common refinement of the normal fans $\Sigma_P$ and $\Sigma_Q$.
		\item\label{property:2} Suppose that $\mathbf{a}$ and $\mathbf{b}$ are in the interior of the cones $C_a$ and $C_b$ in $\cF_n$, respectively.
		The normal fan $\Sigma_P$ is a refinement of the normal fan $\Sigma_Q$ if and only if the cone $C_b$ is a face of the cone $C_a$ in $\cF_n$.
	\end{enumerate}
	With this prelude we can conclude that if the polytopes $P$ and $Q$ are compatible then the common refinement $\Sigma:=\Sigma_P\cap \Sigma_Q$ of their normal fans is alcoved, i.e., the normal fan of some alcoved polytope.
	The normal fan $\Sigma$ thus corresponds to a point $\mathbf{s}\in \cF_n$ lying in the interior of some cone $C_s$ of $\cF_n$.
	As $\Sigma$ is by construction a refinement of both $\Sigma_P$ and $\Sigma_Q$,  both cones $C_a$ and $C_b$ are faces of the cone $C_s$ (by property~\eqref{property:2} above).
	Hence, both $\mathbf{a}$ and $\mathbf{b}$ are contained in the cone $C_s$.
	Reversing these arguments yields the converse which completes the proof.
\end{proof}

Theorem~\hyperref[thm:A]{A} has the following consequence for the type fan of alcoved polytopes.

\begin{theorem}\label{thm:2_determined}
    The type fan of alcoved polytopes is two-determined, i.e., if in a collection of rays $\rho_1,\ldots , \rho_s$ every pair $\rho_i,\rho_j$ belongs to some cone of $\cF_n$, then there exists a cone containing the whole collection.
\end{theorem}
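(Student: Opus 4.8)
The plan is to reduce this statement to Theorem~\hyperref[thm:A]{A} by passing through the dictionary provided by \Cref{prop:compatibility_type_fan}. First I would choose, for each ray $\rho_i$, a parameter point $\mathbf{a}_i$ in its relative interior; this $\mathbf{a}_i$ minimally defines an alcoved polytope $P_i$, and since rescaling $P_i$ only moves $\mathbf{a}_i$ along $\rho_i$, the choice of representative is immaterial. Because $\mathbf{a}_i$ generates $\rho_i$, a ray $\rho_i$ is contained in a cone $C\in\cF_n$ if and only if $\mathbf{a}_i\in C$; hence two rays $\rho_i,\rho_j$ lie in a common cone of $\cF_n$ precisely when $\mathbf{a}_i$ and $\mathbf{a}_j$ do. By \Cref{prop:compatibility_type_fan} the latter is equivalent to $P_i$ and $P_j$ being compatible. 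Thus the hypothesis that every pair of rays lies in a common cone translates exactly into the statement that the polytopes $P_1,\dots,P_s$ are pairwise compatible.

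Applying Theorem~\hyperref[thm:A]{A} then gives that the full Minkowski sum $P:=P_1+\dots+P_s$ is alcoved; equivalently, the common refinement $\Sigma:=\Sigma_{P_1}\cap\dots\cap\Sigma_{P_s}$ of all the normal fans is the normal fan of an alcoved polytope, and so corresponds to a point $\mathbf{s}$ lying in the relative interior of some cone $C_s$ of $\cF_n$. It then remains to verify that $C_s$ contains every ray $\rho_i$. For this I would invoke the multi-polytope analogue of property~\eqref{property:2} in the proof of \Cref{prop:compatibility_type_fan}: since $\Sigma$ refines each $\Sigma_{P_i}$, the cone whose relative interior contains $\mathbf{a}_i$ --- which is exactly $\rho_i$ --- is a face of $C_s$. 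Consequently $\rho_i\subseteq C_s$ for all $i$, and $C_s$ is the desired single cone containing the whole collection.

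The genuine mathematical content is entirely carried by Theorem~\hyperref[thm:A]{A}, which I am assuming; the present statement is essentially its fan-theoretic reformulation. The only point requiring care is the upgrade of \Cref{prop:compatibility_type_fan} from two polytopes to $s$ of them in the reverse direction, namely that compatibility of the entire collection yields one cone with all the $\rho_i$ as faces. I expect this to be the main (though routine) obstacle: it follows by rerunning the argument of \Cref{prop:compatibility_type_fan} with the common refinement $\Sigma$ of all $s$ normal fans in place of a pairwise common refinement, the alcovedness of $\Sigma$ being guaranteed by Theorem~\hyperref[thm:A]{A} rather than by a two-fold application of the proposition.
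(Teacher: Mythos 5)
Your proposal is correct and is precisely the reduction the paper intends: the paper states \Cref{thm:2_determined} as a direct consequence of Theorem~\hyperref[thm:A]{A} without spelling out the translation, and your argument fills in exactly that dictionary via \Cref{prop:compatibility_type_fan}. Note that your final step needs no genuine ``multi-polytope analogue'' of property~\eqref{property:2} --- applying it pairwise to $P$ and each $P_i$ (with $\Sigma_P$ refining $\Sigma_{P_i}$) already yields that each $\rho_i$ is a face of $C_s$.
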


Theorem~\ref{thm:2_determined} implies that to describe the combinatorics of the type fan $\cF_n$ it is enough to know its rays, and its two-dimensional cones.
The rays of the type fan $\cF_n$ are parametrized by (normal fans of) indecomposable alcoved polytopes, i.e., alcoved polytopes which do not admit nontrivial Minkowski sum decompositions with alcoved polytope summands. 

One family of indecomposable alcoved polytopes is formed by alcoved simplices which are classified by ordered set partitions.
We investigate alcoved simplices in detail in~\Cref{sec:alcoved_simplices}.
Another class of alcoved polytopes which are indecomposable is formed by  connected positroids in the sense of matroid theory. Indeed, positroid polytopes are alcoved polytopes such that their edges are parallel to roots (after a coordinate transformation described below). Hence, if a positroid polytope is decomposed as a Minkowski sum of two alcoved polytopes, both summands have to be positroid polytopes as well. In particular, positroid polytopes are indecomposable into alcoved summands if and only if the positroid is connected.
Combinatorially, connected positroids are parametrized by stabilized-interval-free permutations which are permutations that do not stabilize any proper interval~\cite{ARW16}. In this work we mostly focus on compatibility of alcoved simplices, however we believe that our techniques could be applied to study compatibility of positroid polytopes as well.

\subsection{Connection to the all-subset hyperplane arrangement}\label{sec:all_subset} 
The \emph{all-subset hyperplane arrangement} in $\cH_n$
consists of all hyperplanes $\sum_{j \in S} x_j=0$ for all proper, nonempty subsets $S$ of $\lbrack n\rbrack$. 
The all-subset arrangement, sometimes also called the \emph{resonance arrangement}, has been extensively studied in the recent years \cite{early2017canonical,early2018honeycomb,gutekunst2019root,kuehne23,liu2023adjoint}.
In particular, maximal chambers of the all-subset arrangement are in bijection with certain generalized retarded Green's functions which occur in thermal field theory in theoretical particle physics \cite{evans1995being} and are enumerated in O.E.I.S. entry A034997. 
In this subsection we explain the relation between the all-subset arrangement and alcoved polytopes.  
\begin{proposition}\label{prop:refinement}
	The common refinement of the normal fans of all alcoved polytopes is equal to the chamber fan for the all-subset hyperplane arrangement in its coarsest fan structure.
\end{proposition}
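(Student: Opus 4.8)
The plan is to show that the two fans refine one another. Denote by $\cR$ the common refinement of the normal fans $\Sigma_P$ of all alcoved polytopes $P\subseteq\cH_n$, and by $\cA$ the chamber fan of the all-subset arrangement in its coarsest structure; recall that $\cA$ is exactly the common refinement of the two-chamber fans $\{H_S^+,H_S^-\}$ of the single hyperplanes $H_S=\{x\in\cH_n:\sum_{j\in S}x_j=0\}$, as $S$ ranges over the proper nonempty subsets of $[n]$.

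The first step is a dictionary between codimension-one root subspaces and all-subset hyperplanes. By~\Cref{lem:cliques} the graph $\Gamma_L$ of a root subspace $L$ is a disjoint union of cliques, so $\dim L=n-c$, where $c$ is the number of connected components of $\Gamma_L$ (isolated vertices counting as components). Hence $L$ has dimension $n-2$ if and only if $\Gamma_L$ has exactly two components, that is $[n]=S\sqcup S^c$; in that case $L$ is spanned by the roots $e_{ij}$ with $i,j\in S$ together with those with $i,j\in S^c$, and a direct check identifies this span with $H_S$. Thus the $(n-2)$-dimensional root subspaces of $\cH_n$ are precisely the hyperplanes $H_S$.

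For the inclusion that $\cA$ refines $\cR$, let $P$ be any alcoved polytope. By the normal-fan characterization of alcovedness, every cone of $\Sigma_P$ is generated by roots, so each wall of $\Sigma_P$ is a root cone of dimension $n-2$ and therefore spans an $(n-2)$-dimensional root subspace, which by the dictionary is some $H_S$. Every wall of $\Sigma_P$ hence lies on an all-subset hyperplane, so $\cA$ refines $\Sigma_P$. As this holds for each $P$ and $\cR$ is the common (i.e.\ coarsest) refinement of the $\Sigma_P$, we conclude that $\cA$ refines $\cR$. For the reverse inclusion I exhibit for each $S$ an alcoved polytope realizing $H_S$: take the segment $I_S\subseteq\cH_n$ in the direction of the projection of $\sum_{j\in S}e_j$ to $\cH_n$. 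Its normal fan has lineality space $H_S$ and two opposite rays in $\cH_n/H_S$; the polytope $I_S$ is alcoved because the image of a root $e_{ij}$ with $i\in S$, $j\in S^c$ is a positive multiple of the segment direction and so generates one quotient ray, while $e_{ji}$ generates the other, meeting the alcoved criterion. Therefore $\Sigma_{I_S}=\{H_S^+,H_S^-,H_S\}$, and since $\cR$ refines each such $\Sigma_{I_S}$ it refines their common refinement, which is exactly $\cA$. Combining the two inclusions yields $\cR=\cA$.

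The step I expect to require the most care is this dictionary together with the verification that the segments $I_S$ are genuinely alcoved. The subtle point is that the direction $\sum_{j\in S}e_j$ is parallel to a root only when $|S|\in\{1,n-1\}$; for intermediate $S$ the segment is nonetheless alcoved because alcovedness is tested on the quotient $\Sigma_{I_S}/H_S$, where the two rays are generated by the crossing roots $e_{ij}$ with $i\in S$ and $j\in S^c$. Once these points are settled, the remainder is routine bookkeeping with refinements of fans.
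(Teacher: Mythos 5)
Your proof is correct and follows essentially the same route as the paper's: one direction realizes each all-subset hyperplane as the normal fan of a one-dimensional alcoved polytope (your segments $I_S$ are precisely the two-block alcoved simplices $\Delta_{(S^c,S)}$ up to translation and dilation), and the other observes that the walls of any alcoved normal fan lie on all-subset hyperplanes, so its cones are unions of chambers. Your write-up merely supplies more detail than the paper does — in particular the dictionary identifying the $(n-2)$-dimensional root subspaces with the hyperplanes $H_S$, and the explicit verification via the quotient criterion that the segments $I_S$ are alcoved — which the paper leaves implicit.
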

\begin{proof}
	The common refinement of the normal fans of all alcoved polytopes refines the chamber fan for the all-subset hyperplane arrangement, since the hyperplanes in the all-subset hyperplane arrangement are exactly the normal fans of the one-dimensional alcoved polytopes, i.e., line segments.  Conversely, for any alcoved polytope,  any cone in its normal fan is bounded by hyperplanes in the all-subset arrangement, hence the cones themselves are unions of cones in the chamber fan for the all-subset hyperplane arrangement.
\end{proof}

For a fan $\Sigma$, a convex polytope $P$ such that the normal fan $\Sigma_P$ coarsens $\Sigma$ is uniquely determined by its support function. Conversely, every convex cone-wise linear function on $\Sigma$ is the support function of some polytope with normal fan coarser than $\Sigma$. Thus the set of polytopes with normal fan coarser than $\Sigma$ naturally form a polyhedral cone. Faces of the above cone correspond to the possible normal fans of polytopes with normal fan coarser than $\Sigma$.
Let us denote by $R_n$ the polytopes whose normal fans are coarser than the all-subset arrangement.
Since the all-subset arrangement is a refinement of any normal fan of alcoved polytope, their normal fans correspond to (some) faces of $R_n$. Therefore we obtain the following corollary of \Cref{prop:refinement}.
\begin{corollary}
    The type fan of alcoved polytopes $\cF_n$ is combinatorially equivalent to a union of faces of $R_n$.
\end{corollary}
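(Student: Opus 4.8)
The plan is to realize the cones of $\cF_n$ as a subfamily of the faces of $R_n$ and to check that the two face posets agree on this subfamily. Write $\cA_n$ for the chamber fan of the all-subset arrangement in its coarsest fan structure. By \Cref{prop:refinement}, $\cA_n$ is the common refinement of the normal fans of all alcoved polytopes; in particular $\cA_n$ refines $\Sigma_P$ for every alcoved polytope $P$, so each alcoved normal fan is a \emph{coarsening} of $\cA_n$. By the discussion preceding the corollary, the faces of $R_n$ are indexed by exactly the normal fans that coarsen $\cA_n$, with $F'$ being a face of $F$ precisely when the fan indexing $F'$ coarsens the fan indexing $F$. Hence every alcoved normal fan already indexes a face of $R_n$, and I would let $F_\Sigma$ denote the face attached to an alcoved fan $\Sigma$.

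Next I would set up the bijection. The cones of $\cF_n$ are by definition indexed by the normal fans of alcoved polytopes, say $C_\Sigma$ for the alcoved fan $\Sigma$; I would send $C_\Sigma \mapsto F_\Sigma$. This is well defined and injective because distinct cones of the type fan carry distinct normal fans. Its image is precisely the set of faces of $R_n$ whose indexing fan is alcoved. Here I would use that being alcoved is a property of the normal fan alone---the lineality is a root subspace and the rays of the quotient fan are roots, as recorded just after the definition of alcoved polytopes---so that any polytope realizing an alcoved normal fan is itself alcoved; this guarantees that the relative interior of $F_\Sigma$ consists of alcoved polytopes and corresponds, under this indexing, to the relatively open cone $C_\Sigma$.

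Finally I would verify that $\Sigma \mapsto F_\Sigma$ is an isomorphism of posets onto this subfamily, which gives the asserted combinatorial equivalence. In both structures incidence is governed by coarsening: in $\cF_n$ the cone $C_{\Sigma'}$ is a face of $C_\Sigma$ exactly when $\Sigma'$ is a coarsening of $\Sigma$ (the polytope degenerating along the boundary of its type cone), and the faces of $F_\Sigma$ in $R_n$ are exactly the $F_{\Sigma'}$ with $\Sigma'$ coarsening $\Sigma$ by the description above. Thus $C_{\Sigma'}$ is a face of $C_\Sigma$ if and only if $F_{\Sigma'}$ is a face of $F_\Sigma$. I expect the main subtlety to be precisely this compatibility of the two face relations, together with the fact that the chosen faces need not be down-closed inside $R_n$ (a face of an alcoved face of $R_n$ may fail to be alcoved); consequently the phrase \emph{union of faces} must be read at the level of the induced sub-poset of alcoved faces rather than as a polyhedral subcomplex, and the equivalence is one of face posets rather than an equality of point sets in a common ambient space.
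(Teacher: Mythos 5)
Your main argument is essentially the paper's own proof: the paper deduces the corollary directly from the discussion preceding it (faces of $R_n$ correspond to the normal fans coarsening the all-subset chamber fan) together with \Cref{prop:refinement}, and your bijection $C_\Sigma\mapsto F_\Sigma$, with the face relations on both sides governed by coarsening of normal fans (observation~\eqref{property:2} in the proof of \Cref{prop:compatibility_type_fan} on the $\cF_n$ side), is a more explicit write-up of exactly that argument. That part is correct and, if anything, more careful than the paper.

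Your closing caveat, however, is backwards, and it is worth fixing because your own setup quietly depends on its negation. The alcoved faces of $R_n$ \emph{are} down-closed. If $F_{\Sigma'}$ is a face of an alcoved face $F_\Sigma$, then $\Sigma'$ coarsens $\Sigma$, so every cone of $\Sigma'$ is a union of cones of $\Sigma$, and every cone of an alcoved normal fan is a root cone (as noted in the proof of Theorem~\hyperref[thm:A]{A}). Hence the lineality space $L'$ of $\Sigma'$, being a union of root cones, is spanned by the roots it contains and is therefore a root subspace; and any ray of $\Sigma'/L'$ lifts to a cone of $\Sigma'$ of dimension $\dim L'+1$, which must contain a cone of $\Sigma$ of that same dimension (a finite union of lower-dimensional cones cannot cover it), hence contains a root outside $L'$ whose image generates the ray. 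By the normal-fan characterization of alcovedness at the start of \Cref{sec:type_fan}, every polytope with normal fan $\Sigma'$ is then alcoved; equivalently, every weak Minkowski summand of an alcoved polytope is alcoved. So ``union of faces'' may be read literally, as a polyhedral subcomplex, not merely as an induced subposet. Moreover, were your caveat true, $\cF_n$ would not be closed under passing to faces of its cones --- it would not be a fan, contradicting \Cref{subsec:type_fan} --- and your description of the face relation in $\cF_n$ (that the faces of $C_\Sigma$ are exactly the $C_{\Sigma'}$ with $\Sigma'$ coarsening $\Sigma$) would break down, since some faces of $C_\Sigma$ would then not be cones of $\cF_n$ at all. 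The main body of your proof stands; only this side remark needs correction.
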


\section{Intersection of root cones}\label{sec:root_cones}

In this section we set up a general framework to study the intersection of cones generated by roots.
We will use these techniques in the following sections to discuss the common refinement of the normal fans of alcoved simplices.

A root cone $\sigma$ is a (in general not pointed) cone in $\R^n$ generated by roots.
We define a partially directed graph $\Gamma_\sigma$ which captures the combinatorics of the roots in $\sigma$.
A partially directed graph is a graph that contains both directed and undirected edges.
We think of this as a simplification of directed graphs where we replace a pair of directed edges $i\to j$ and $j\to i$ by an undirected edge $\{i,j\}$.
In what follows we treat directed graphs as partially directed graphs using this identification.

\begin{definition}
	For a root cone $\sigma\subseteq \R^n$ we define a partially directed graph $\Gamma_\sigma$ in the following way.
	\begin{enumerate}
		\item The vertices of $\Gamma_\sigma$ are labeled by $[n]$;
		\item $\Gamma_\sigma$ has an undirected edge $\{i,j\}$ if both roots $e_{ij}$ and $e_{ji}$ are in $\sigma$; 
		\item $\Gamma_\sigma$ has a directed edge $i\to j$ for every root $e_{ij}\in \sigma$ (and $e_{ji}\not \in \sigma$).
	\end{enumerate}
\end{definition}

\begin{remark}
	Note that if $\sigma$ is a pointed root cone, i.e., $\sigma$ does not contain any vector space, the graph $\Gamma_\sigma$ does not have undirected edges. 
	
	On the other end of the spectrum, if $\sigma=L$ is a linear space, the graph $\Gamma_\sigma$ does not have directed edges and coincides with the graph $\Gamma_L$ from Definition~\ref{def:GL}.
	
	Now, let  $\sigma$ be a general root cone with the lineality space $L_\sigma$ and let $\sigma'$ be the  
	corresponding pointed root cone in the quotient $\cH_n/L_\sigma$. Then, $\Gamma_{L_\sigma}$ is the union of all the undirected edges of $\Gamma_\sigma$ and $\Gamma_{\sigma'}$ is obtained form $\Gamma_\sigma$ by contracting all undirected edges and then removing parallel directed edges.
\end{remark}

By a directed path from $i$ to $j$ in a partially directed graph we mean a sequence of edges starting in $i$ and ending in $j$ where undirected edges can be crossed in both directions and directed edges only in their specified directions.

A partially directed graph $G$ is \emph{transitively closed} if there exists an edge $i\to j$ in $G$ whenever $G$ has a directed path from $i$ to $j$.
For a given graph $G$, we define its transitive closure denoted by $\tc (G)$ as the smallest partially directed transitively closed graph containing $G$.

\begin{lemma}\label{lem:transitively_closed}
	The partially directed graph $\Gamma_\sigma$ of a root cone $\sigma\subseteq\R^n$ is transitively closed. Moreover, let $\sigma_A$ be a cone generated by a set of roots $A$. Then the graph $\Gamma_{\sigma_A}$ is the transitive closure of the graph $\Gamma_A$ having a directed edge $i\to j$ for every root $e_{ij}\in A$.
\end{lemma}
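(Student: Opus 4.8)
The plan is to derive both statements from a single telescoping identity relating directed walks to sums of roots, together with the elementary fact that a cone is closed under addition and nonnegative scaling.

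For the first claim I would argue as follows. Suppose $\Gamma_\sigma$ contains a directed path $i=v_0,v_1,\dots,v_k=j$. Each step either traverses a directed edge $v_\ell\to v_{\ell+1}$ or an undirected edge $\{v_\ell,v_{\ell+1}\}$, and in both cases the defining conditions of $\Gamma_\sigma$ guarantee $e_{v_\ell v_{\ell+1}}\in\sigma$. Summing over the path and telescoping,
\[
\sum_{\ell=0}^{k-1} e_{v_\ell v_{\ell+1}} = e_{v_0}-e_{v_k} = e_i-e_j = e_{ij},
\]
which lies in $\sigma$ since $\sigma$ is closed under addition. By definition of $\Gamma_\sigma$ the membership $e_{ij}\in\sigma$ forces an edge $i\to j$ (directed if $e_{ji}\notin\sigma$, undirected otherwise), so $\Gamma_\sigma$ is transitively closed.

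For the second claim I would prove $\Gamma_{\sigma_A}=\tc(\Gamma_A)$ by two inclusions. The inclusion $\tc(\Gamma_A)\subseteq\Gamma_{\sigma_A}$ is immediate: every edge of $\Gamma_A$ arises from a root of $A\subseteq\sigma_A$ and is therefore an edge of $\Gamma_{\sigma_A}$, so $\Gamma_A\subseteq\Gamma_{\sigma_A}$; since $\Gamma_{\sigma_A}$ is transitively closed by the first part and $\tc(\Gamma_A)$ is by definition the smallest transitively closed graph containing $\Gamma_A$, the inclusion follows. The reverse inclusion is where I expect the real work to be.

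The main obstacle will be the inclusion $\Gamma_{\sigma_A}\subseteq\tc(\Gamma_A)$, i.e.\ showing that any edge $i\to j$ of $\Gamma_{\sigma_A}$, equivalently $e_{ij}\in\sigma_A$, is realized by a directed path in $\Gamma_A$. The idea is to write $e_{ij}=\sum_{a\in A}c_a\,a$ with all $c_a\ge 0$ and to read each coefficient $c_a$ as a flow along the directed edge of $\Gamma_A$ corresponding to the root $a$; comparing coordinates, the identity says the net flow is $+1$ at $i$, $-1$ at $j$, and $0$ at every other vertex. Letting $S$ be the set of vertices reachable from $i$ along edges of positive flow, if $j\notin S$ then summing the conservation law over $m\in S$ gives total net outflow $1$, whereas no positive-flow edge leaves $S$ and hence the net outflow is at most $0$; this contradiction forces $j\in S$, producing a directed path from $i$ to $j$ in $\Gamma_A$ and thus the edge $i\to j$ in $\tc(\Gamma_A)$. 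For an undirected edge $\{i,j\}$ of $\Gamma_{\sigma_A}$ the same argument applied to both $e_{ij}$ and $e_{ji}$ yields both directed edges, hence the undirected edge, in $\tc(\Gamma_A)$. The delicate point is precisely this reachability-and-conservation argument, a self-contained substitute for flow decomposition that must handle the nonnegativity of the cone coefficients carefully.
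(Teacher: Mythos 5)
Your proof is correct and follows essentially the same route as the paper: the telescoping identity for transitive closure, and the equivalence between directed paths in $\Gamma_A$ and expressions of $e_{ij}$ as nonnegative combinations of roots in $A$. The only difference is that the paper simply asserts this equivalence (``notice that\dots''), whereas you supply a complete proof of its nontrivial direction via the flow-conservation and reachability argument --- a detail the paper leaves to the reader.
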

\begin{proof}
	For the first part, assume there is a path from $i=i_1\to i_2 \to \dots \to i_k = j$ in $\Gamma_\sigma$ then $\sigma$ contains the roots $e_{i_si_{s+1}}$ for $1\le s\le k-1$.
	Therefore, $e_{i_1i_2}+\dots+e_{i_{k-1}i_k}=e_{ij}\in \sigma$.
	
	For the second part notice that there is a directed path from $i$ to $j$ in $\Gamma_A$ if and only if the root $e_{ij}$ can be expressed as a non-negative combination of roots in $A$. Thus $\Gamma_{\sigma_A}=\tc (\Gamma_A)$ by the definition of both graphs.
\end{proof}

Analogously to the discussion in~\Cref{sec:type_fan} we can describe the set of points in $\sigma$ as a collection of weights on edges of $\Gamma_\sigma$, which are non-negative for directed edges. Similarly, a collection of weights defines the same point $x\in \sigma$ if and only if their difference corresponds to a linear combination of directed cycles in $\Gamma_\sigma$.
To study the intersection of two root cones $\sigma$ and $\tau$ we define the \emph{intersection graph of} $\sigma$ and $\tau$ as a partially directed graph $\Gamma_{\sigma,\tau}$ that encodes the rays of $\sigma\cap \tau$.
\begin{definition}
	Concretely, we define the partially directed graph $\Gamma_{\sigma,\tau}$ as $\Gamma_\sigma\cup \Gamma^{\op}_{\tau}$, where $\Gamma^{\op}_{\tau}$ is the opposite graph of $\Gamma_\tau$, i.e., the same graph with all directed edges reversed.
	To record for each edge in~$\Gamma_{\sigma,\tau}$ whether it stems from $\Gamma_\sigma$ or $\Gamma^{\op}_\tau$ we call the edges of $\Gamma_\sigma$ \emph{upper} and of $\Gamma^{\op}_\tau$ \emph{lower} edges, respectively, where we have a drawing of the graph $\Gamma_{\sigma,\tau}$ as in Figure~\ref{fig:Gsigma} in mind.
	Note that if an edge appears both in $\Gamma_\sigma$ and $\Gamma^{\op}_\tau$ we keep it twice in $\Gamma_{\sigma,\tau}$ once as an upper and once as a lower edge (as for instance the edge $1\to 2$ in Figure~\ref{fig:Gsigma}).
\end{definition}

\begin{definition}\label{def:primalt}
	Let $C$ be a cycle in an intersection graph $\Gamma_{\sigma,\tau}$.
	We call the cycle $C$ \emph{alternating} if the upper and lower edges in $C$ alternate, i.e., no two consecutive edges in $C$ are both upper or both lower.
	
	We call an alternating cycle $C$ \emph{primitive} if there is no collection of non-empty alternating cycles $C_1,\ldots, C_k$ with $k\geq 2$ such that the sets of heads and tails of upper edges of $C$ coincides with the union of the sets of heads and tails of upper edges of $C_1,\ldots,C_k$.
\end{definition}

With these definitions we can give a graphical description of the intersection of two root cones.

\begin{proposition}\label{prop:root_cones}
	Let $\sigma, \tau$ be two root cones. 
	\begin{enumerate}[(i)]
		\item The lattice points in the intersection $\sigma\cap\tau$ are given by collections of cycles in $\Gamma_{\sigma,\tau}$. 
		More precisely if $\mathcal{C}$ is a set of cycles in $\Gamma_{\sigma,\tau}$ then
		\[
		x_\mathcal{C} = \sum_{(i\to j) \in \mathcal{C}\cap \Gamma_{\sigma}} e_{ij}
		\]
		is a lattice point in $\sigma\cap\tau$ where this sum takes the number of appearances of $e_{ij}$ in the collection of cycles $\mathcal{C}$ into account.
		Moreover, every lattice point arises in this way.
		However, several collections of cycles can define the same point in $\sigma\cap\tau$.
		If $\mathcal{C}$ contains just a single cycle $C$ we also denote $x_\mathcal{C}$ by $x_C$.
		\item For every cycle $C$ in $\Gamma_{\sigma,\tau}$ there exists an alternating cycle $C'$ with $x_C=x_{C'}$.
		Therefore it suffices to consider the alternating cycles of $\Gamma_{\sigma,\tau}$.
		\item Assume that $\sigma\cap \tau$ is a pointed cone.
		In this case, every ray of $\sigma\cap \tau$ is generated by a point $x_C$ where $C$ is a primitive alternating cycle of~$\Gamma_{\sigma,\tau}$.
		And conversely, for every primitive alternating cycle $C$ there is a ray generated by $x_C$.
	\end{enumerate}
\end{proposition}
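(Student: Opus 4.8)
The plan is to treat points of $\sigma \cap \tau$ as circulations on the intersection graph $\Gamma_{\sigma,\tau}$ and to translate the geometry of the cone into the combinatorics of its cycles, exactly as foreshadowed by the weight description preceding Definition~\ref{def:primalt}. For part (i) I would start from a lattice point $x \in \sigma \cap \tau$. Writing $x$ both as a nonnegative combination of roots of $\sigma$ and of roots of $\tau$, and recording the second representation in the reversed graph $\Gamma^{\op}_\tau$, produces two weightings of $\Gamma_{\sigma,\tau}$ representing the same vector; their concatenation is balanced at every vertex, i.e.\ a nonnegative circulation. The point to justify carefully is \emph{integrality}: since the vectors $e_{ij}=e_i-e_j$ form the incidence matrix of a digraph and are therefore totally unimodular, every lattice point of a root cone is a nonnegative integer combination of its roots, so the circulation can be taken integral. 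A standard flow decomposition then writes it as a sum of directed cycles $\mathcal{C}$ in $\Gamma_{\sigma,\tau}$, and restricting to the upper edges gives $x=x_{\mathcal{C}}$. The converse is the direct computation that, for any collection of cycles, the telescoping identity $\sum_{\text{upper}}e_{ij}=-\sum_{\text{lower}}e_{st}$ exhibits $x_{\mathcal{C}}$ simultaneously as a nonnegative combination of roots of $\sigma$ and of $\tau$.

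For part (ii) I would remove non-alternation by shortcutting. If a cycle contains two consecutive upper edges $i\to j\to k$, then $\Gamma_\sigma$ is transitively closed by Lemma~\ref{lem:transitively_closed}, so $i\to k$ is again an upper edge; replacing the two edges by this single one shortens the cycle, and because $e_{ij}+e_{jk}=e_{ik}$ it leaves $x_C$ unchanged. The same works for consecutive lower edges using $\Gamma^{\op}_\tau$, which is transitively closed as well. Iterating terminates in an alternating cycle $C'$ with $x_{C'}=x_C$.

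For part (iii) assume $\sigma\cap\tau$ is pointed. Combining (i) and (ii), and then splitting every non-primitive alternating cycle according to Definition~\ref{def:primalt} while inducting on length, I would show $\sigma\cap\tau=\mathrm{cone}\{x_C : C\text{ primitive alternating}\}$. Since in a pointed cone every extreme ray must occur, up to scaling, in any generating set, this immediately gives that each ray is generated by some primitive alternating $x_C$. For the converse, that each primitive $x_C$ spans an extreme ray, I would argue by contraposition: a non-extreme $x_C$ can be written as a nonnegative combination of alternating-cycle points pointing in at least two independent directions, and the task is to upgrade this vector identity to a genuine combinatorial splitting of $C$ contradicting primitivity. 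Here the enabling observation is that in a \emph{simple} alternating cycle each vertex meets exactly one upper and one lower edge, so the sets of heads and tails of its upper edges are disjoint and $x_C$ is a $\{0,\pm 1\}$-vector from which those sets are recoverable; pointedness of $\sigma\cap\tau$ then rules out cancelling monochromatic sub-circulations and forces any decomposition to respect the head/tail data.

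The main obstacle will be precisely this last correspondence in (iii): matching the combinatorial notion of primitivity (non-splittability of the head/tail multisets) with geometric extremality. The generation statement, and hence that every ray is primitive, is soft; the delicate direction is \emph{primitive} $\Rightarrow$ \emph{extreme}, which requires controlling how a rational decomposition of $x_C$ can be realized by integral alternating cycles reusing the edges of $C$, and using carefully that it is the intersection $\sigma\cap\tau$, and not the individual cones $\sigma$ and $\tau$, that is assumed pointed.
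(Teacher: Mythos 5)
Your parts (i) and (ii) are exactly the paper's argument: the telescoping identity $\sum_{(i\to j)\in C^\uparrow} e_{ij} = \sum_{(i'\to j')\in C^\downarrow} e_{j'i'}$ for the inclusion of cycle points into $\sigma\cap\tau$, and shortcutting runs of consecutive upper (resp.\ lower) edges using the transitive closedness of $\Gamma_\sigma$ and $\Gamma^{\op}_\tau$ from Lemma~\ref{lem:transitively_closed} for (ii). If anything, you are more careful than the paper on the converse inclusion in (i): the paper delegates the claim that every lattice point of $\sigma\cap\tau$ comes from a collection of cycles to the informal circulation discussion preceding the proposition, whereas you justify integrality via total unimodularity of the digraph incidence matrix and then invoke integral flow decomposition; both steps are correct and worth making explicit.

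Part (iii) is where the comparison is most informative. The paper's entire proof of (iii) is the remark that a ray generator of a pointed cone cannot be written as a sum of two non-collinear cone elements, followed by the assertion that (iii) ``follows directly from part (i), (ii) and the definition of primitive cycles.'' Your generation direction (every extreme ray is some primitive $x_C$) is exactly this soft argument, and the direction you flag as delicate --- primitive $\Rightarrow$ extreme --- is precisely the step the paper does not spell out either; you have not missed an idea that the paper supplies. Moreover, your worry is substantive: after scaling a decomposition $x_C = y + z$ and applying (i)--(ii), the resulting alternating cycles may pass through vertices not lying on $C$, and the ensuing cancellations in the vector sum are invisible to the purely set-theoretic condition of Definition~\ref{def:primalt}. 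For instance, take $\sigma=\tau=\langle e_{13},e_{32}\rangle$; then $\Gamma_{\sigma,\tau}$ contains the alternating $2$-cycle $1\to 2$ (upper), $2\to 1$ (lower), which satisfies the letter of primitivity (any witnessing collection would have to involve the vertex $3$), yet $x_C = e_{12} = e_{13}+e_{32}$ is not extremal in $\sigma\cap\tau$. In this example the discrepancy involves only a length-two cycle, whose $x_C$ is a root in any case, so the application the paper actually needs (Corollary~\ref{cor:pointed_cones_comp}, which concerns primitive cycles of length at least four) is not endangered; but the step your proposal leaves open coincides with a genuine looseness in the paper's own proof rather than with a gap the paper closes.
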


Before proving this proposition we give an example.
\begin{example}\label{ex:root_cones}
	Consider the two roots cones $\sigma,\tau \in \R^4$ defined by
	\[
	\sigma = \langle e_{12},e_{23},e_{34} \rangle,\quad \tau = \langle e_{14},e_{21},e_{32} \rangle.
	\]
	Their corresponding graphs $\Gamma_\sigma,\Gamma_\tau,\Gamma_{\sigma,\tau}$ are depicted in Figure~\ref{fig:Gsigma}.
	The intersection graph $\Gamma_{\sigma,\tau}$ has the four primitive alternating cycles
	\[
	C_1: \,1\to 4 \to 1, \quad C_2: \,2 \to 4 \to 2, \quad C_3: \, 3 \to 4 \to 3, \mbox{ and } C_4: \, 1 \to 2 \to 3 \to 4 \to 1.
	\]
	The cycle $C_4$ is the one highlighted in Figure~\ref{fig:Gsigma} in bold.
	The corresponding rays are generated by
	\[
	x_{C_1}=e_{14}, \quad x_{C_2}=e_{24}, \quad x_{C_3}= e_{34}, \mbox{ and } x_{C_4}= e_{12}+e_{34}.
	\]
	Note that the ray generated by $x_{C_4}$ is not generated by a root.
\end{example}
	\begin{center}
		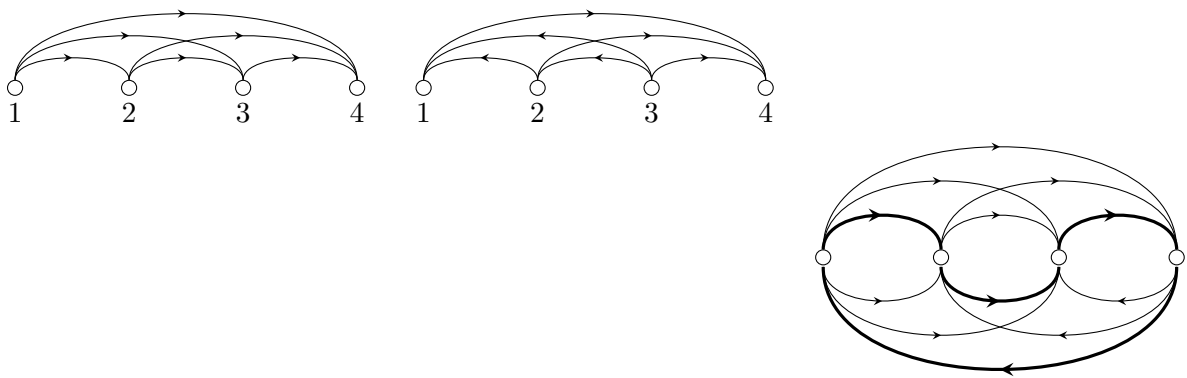
\begin{figure}[hbt]
			\begin{subfigure}{.3\linewidth}
				\begin{center}
				\begin{tikzpicture}[baseline=0,>=stealth,xscale=1.5]
        			\foreach \x in {1,...,4}{
        				\coordinate (\x) at (\x,0);
        			}
        			\foreach \x in {1,2,3,4}{
        				\draw (\x) node[below,fill=white,draw=black,circle,inner sep =2] {} node[below,yshift=-5pt] {$\x$};
        			}
        			\begin{scope}[decoration={
        					markings,
        					mark=at position 0.5 with {\arrow{>}}}
        				]
        				\draw[postaction={decorate}] (1) to [in=90,out=90] (2);
        				\draw[postaction={decorate}] (2) to [in=90,out=90] (3);
        				\draw[postaction={decorate}] (3) to [in=90,out=90] (4);
        				\draw[postaction={decorate}] (1) to [in=90,out=90] (3);
        				\draw[postaction={decorate}] (2) to [in=90,out=90] (4);
        				\draw[postaction={decorate}] (1) to [in=90,out=90] (4);
        			\end{scope}
        		\end{tikzpicture}
				\end{center}
			\end{subfigure}
            \hspace{.02\linewidth}
			\begin{subfigure}{.3\linewidth}
				\begin{center}
					\begin{tikzpicture}[baseline=0,>=stealth,xscale=1.5]
            			\foreach \x in {1,...,4}{
            				\coordinate (\x) at (\x,0);
            			}
            			\foreach \x in {1,2,3,4}{
            				\draw (\x) node[below,fill=white,draw=black,circle,inner sep =2] {} node[below,yshift=-5pt] {$\x$};
            			}
            			\begin{scope}[decoration={
            					markings,
            					mark=at position 0.5 with {\arrow{>}}}
            				]
            				\draw[postaction={decorate}] (1) to [in=90,out=90] (4);
            				\draw[postaction={decorate}] (2) to [in=90,out=90] (1);
            				\draw[postaction={decorate}] (3) to [in=90,out=90] (2);
            				\draw[postaction={decorate}] (2) to [in=90,out=90] (4);
            				\draw[postaction={decorate}] (3) to [in=90,out=90] (1);
            				\draw[postaction={decorate}] (3) to [in=90,out=90] (4);
            			\end{scope}
            		\end{tikzpicture}
				\end{center}
			\end{subfigure}
            \hspace{.02\linewidth}
			\begin{subfigure}{.3\linewidth}
				\begin{center}
						\begin{tikzpicture}[baseline=0,>=stealth,scale=1.55]
        			\foreach \x in {1,...,4}{
        				\coordinate (\x) at (\x,-1.45);
        			}
         			\coordinate (1a) at (1,-1.6);
        			\coordinate (2a) at (2,-1.6);
        			\coordinate (3a) at (3,-1.6);
        			\coordinate (4a) at (4,-1.6);
        			\foreach \x in {1,2,3,4}{
        				\draw (\x) node[below,fill=white,draw=black,circle,inner sep =2] {};
        			}
        			\begin{scope}[decoration={
        					markings,
        					mark=at position 0.5 with {\arrow{>}}}
        				]
        				\draw[very thick,postaction={decorate}] (1) to [in=90,out=90] (2);
        				\draw[postaction={decorate}] (2) to [in=90,out=90] (3);
        				\draw[very thick,postaction={decorate}] (3) to [in=90,out=90] (4);
        				\draw[postaction={decorate}] (1) to [in=90,out=90] (3);
        				\draw[postaction={decorate}] (2) to [in=90,out=90] (4);
        				\draw[postaction={decorate}] (1) to [in=90,out=90] (4);
        				
        				\draw[very thick,postaction={decorate}] (4a) to [in=-90,out=-90] (1a);
        				\draw[postaction={decorate}] (1a) to [in=-90,out=-90] (2a);
        				\draw[very thick,postaction={decorate}] (2a) to [in=-90,out=-90] (3a);
        				\draw[postaction={decorate}] (4a) to [in=-90,out=-90] (2a);
        				\draw[postaction={decorate}] (1a) to [in=-90,out=-90] (3a);
        				\draw[postaction={decorate}] (4a) to [in=-90,out=-90] (3a);
        			\end{scope}
        		\end{tikzpicture}
			\end{center}
			\end{subfigure}
			\caption{The graphs $\Gamma_\sigma$, $\Gamma_\tau$, and $\Gamma_{\sigma,\tau}$ for the root cones described in Example~\ref{ex:root_cones}. A primitive alternating $4$-cycle of $\Gamma_{\sigma,\tau}$ is highlighted in bold.}
            \label{fig:Gsigma}
		\end{figure}
	\end{center}

\begin{proof}[Proof of Proposition~\ref{prop:root_cones}]
	For (i) let $C$ by a cycle in $\Gamma_{\sigma,\tau}$.
	Denote by $C^\uparrow$ and $C^\downarrow$ the upper and lower edges in $C$, respectively.
	Since $C$ is a cycle in $\Gamma_{\sigma,\tau}$ we have
	\[
	\sum_{(i\to j) \in C^\uparrow} e_{ij}+\sum_{(i'\to j') \in C^\downarrow} e_{i'j'}=0
	\]
	and thus
	\[
	\sum_{(i\to j) \in C^\uparrow} e_{ij}=\sum_{(i'\to j') \in C^\downarrow} e_{j'i'}.
	\]
	Note that by definition both sides of the equation equal $x_C$ and the left-hand side is by construction a lattice point in $\sigma$ and the right-hand side a lattice point in $\tau$.
	
	To prove (ii) note that the graphs $\Gamma_\sigma$ and $\Gamma^{\op}_\tau$ are transitively closed by Lemma~\ref{lem:transitively_closed}.
	Thus given a cycle $C$ in $\Gamma_{\sigma,\tau}$ we can replace every consecutive sequence of only upper or only lower edges with one single edge to obtain an alternating cycle $C'$ that satisfies $x_C=x_{C'}.$ 
	
	To prove (iii) notice that a vector $x\in \sigma$ is a ray generator of a cone $\sigma$ if it cannot be written as a linear combination $x=z+y$ of two non-collinear vectors $z,y\in \sigma$. Thus part (iii) follows directly from part (i), (ii) and the definition of primitive cycles.
\end{proof}

Proposition~\ref{prop:root_cones} yields a graphic criterion to decide whether the intersection of two root cones is again a root cone.
Note that a closely related criterion appeared in Postnikov's work in his study of triangulations of root polytopes~\cite{Postnikov}.

\begin{corollary}\label{cor:pointed_cones_comp}
	Let $\sigma,\tau$ be two root cones and assume that their intersection $\sigma\cap \tau$ is a pointed cone. Then $\sigma\cap \tau$ is a root cone if and only if the graph $\Gamma_{\sigma,\tau}$ does not have a primitive alternating cycle of length at least four.
\end{corollary}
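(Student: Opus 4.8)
The plan is to deduce this corollary directly from Proposition~\ref{prop:root_cones}(iii), using the observation that the rays of a pointed cone completely determine whether it is a root cone. First I would recall that a pointed cone is a root cone precisely when every one of its rays is generated by a root, i.e., by some $e_{ij}$. This is the natural characterization to aim for, since the vertices and edges of $\Gamma_{\sigma,\tau}$ are built exactly from roots, and a length-two alternating cycle $C$ yields $x_C = e_{ij}$ for a single root. Thus the proof reduces to translating ``every ray is a root'' into the combinatorial language of cycles.

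The forward direction I would argue by contrapositive: suppose $\Gamma_{\sigma,\tau}$ has a primitive alternating cycle $C$ of length at least four. By Proposition~\ref{prop:root_cones}(iii), $x_C$ generates a ray of $\sigma\cap\tau$. The key claim is that this ray is not generated by a root. Indeed, $x_C = \sum_{(i\to j)\in C^\uparrow} e_{ij}$ is a sum of at least two roots corresponding to the upper edges of $C$ (a cycle of length at least four has at least two upper edges, since it alternates). Primitivity prevents $x_C$ from collapsing to a single root: if $x_C$ were a positive multiple of some $e_{pq}$, then by part (iii) applied in reverse there would be a length-two alternating cycle $C'$ with $x_{C'}$ collinear to $x_C$, and one would check that the heads and tails of upper edges of $C$ then decompose into those of shorter alternating cycles, contradicting primitivity. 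Hence $\sigma\cap\tau$ has a ray not spanned by a root, so it is not a root cone.

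For the converse, assume every alternating cycle in $\Gamma_{\sigma,\tau}$ of length at least four has been excluded, so the only primitive alternating cycles have length two. By Proposition~\ref{prop:root_cones}(iii), every ray of the pointed cone $\sigma\cap\tau$ is generated by some $x_C$ with $C$ a primitive alternating cycle, hence a length-two cycle, hence $x_C$ is a single root $e_{ij}$. Therefore all ray generators are roots, and since $\sigma\cap\tau$ is pointed it equals the cone over its ray generators, so it is a root cone.

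The step I expect to be the main obstacle is the precise translation, in the forward direction, of ``$x_C$ is collinear to a root $\Rightarrow$ $C$ is non-primitive.'' This requires unwinding Definition~\ref{def:primalt} carefully: one must show that if $x_C = \lambda e_{pq}$ then the multiset of heads and tails of the upper edges of $C$ can be realized by a smaller collection of alternating cycles. The cleanest route is to argue at the level of ray generators: since $\sigma\cap\tau$ is pointed, $x_C$ generates an extremal ray by part (iii), and any root $e_{pq}$ generating the same ray would itself arise from a length-two primitive alternating cycle, forcing $C$ and this short cycle to describe the same ray; the combinatorial bookkeeping of upper-edge endpoints then contradicts minimality. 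I would make sure this matching of endpoints is spelled out so that the appeal to primitivity is airtight rather than merely plausible.
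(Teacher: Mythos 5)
Your proposal is correct and takes essentially the same route as the paper: both directions are read off from Proposition~\ref{prop:root_cones}(iii), combined with the facts that alternating cycles have even length and that $x_C$ is a root precisely when the primitive alternating cycle $C$ has length two, so that a pointed cone whose extremal rays all carry roots is a root cone. The step you single out as the main obstacle --- showing that a \emph{primitive} alternating cycle of length at least four cannot have $x_C$ collinear with a root --- is exactly the point the paper's own proof dispatches in a single asserted sentence, so your sketched argument (producing the length-two cycle from $e_{pq}\in\sigma\cap\tau$ and decomposing the upper-edge endpoints to contradict primitivity) is, if anything, more explicit than the published proof rather than a deviation from it.
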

\begin{proof}
	Indeed, by  Proposition~\ref{prop:root_cones} $(iii)$ rays of $\sigma\cap \tau$ are in bijection with primitive alternating cycles in $\Gamma_{\sigma,\tau}$.
	The vectors corresponding to these cycles are roots if and only if the cycle is of length at most two. Since the length of an alternating cycle is even this finishes the proof.
\end{proof}

\begin{remark}
    Postnikov, Reiner and Williams studied the polar duals of root cones, so-called \emph{braid cones} in~\cite[Sec. 3]{PRW08}.
    They describe a dictionary betwen braid cones and preposets, the latter being equivalent to the transitively closed partially directed graphs that we study in this article.
    While both articles study essentially the same cones, the results are mostly disjoint as the focus of the relevant parts of~\cite{PRW08} lies on describing braid cones as union of Weyl cones and their relation to generalized permutohedra.
\end{remark}

\subsection{Non pointed case}

In this subsection we describe the graphical compatibility criterion of root cones $\sigma,\tau$ generalizing Corollary~\ref{cor:pointed_cones_comp} to the case when $\sigma \cap \tau$ is not necessarily pointed.
By construction, the compatibility can be checked in two steps:
\begin{enumerate}
	\item First, we check that the intersection of the lineality spaces $L_\sigma\cap L_\tau$ of $\sigma, \tau$ is a root subspace. To do so, apply Lemma~\ref{lem:cycles} to the undirected part of $\Gamma_{\sigma,\tau}$ and check that every cycle in this graph of length at least four contains a chord.
	\item If $L_\sigma\cap L_\tau$ is a root space, it is left to work with $\sigma\cap \tau/(L_\sigma\cap L_\tau)$ which is a pointed cone in the quotient space so we can apply Corollary~\ref{cor:pointed_cones_comp} to it. The only step left is to describe $\Gamma_{\sigma/(L_\sigma\cap L_\tau)}, \Gamma_{\tau/(L_\sigma\cap L_\tau)}$. These graphs are obtained from $\Gamma_\sigma$ and $\Gamma_\tau$ by contracting the undirected edges corresponding to the roots in $(L_\sigma\cap L_\tau)$.
\end{enumerate}

The next theorem shows that these two steps can be checked analogously to Corollary~\ref{cor:pointed_cones_comp} above.

\begin{theorem}\label{thm:cone_criterion}
	Let $\sigma, \tau$ be two root cones (possibly linear subspaces), then the intersection $\sigma\cap \tau$ is a root cone if and only if the graph $\Gamma_{\sigma,\tau}$ does not have a primitive alternating cycle of length greater or equal to four.
\end{theorem}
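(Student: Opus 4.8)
The plan is to follow exactly the two-step reduction sketched in the paragraph preceding the statement. Write $L := L_\sigma \cap L_\tau$, which is the lineality space of $\sigma \cap \tau$. The first ingredient I would record is the elementary principle that a cone $K$ generated by roots splits compatibly: $K$ is a root cone if and only if its lineality space $L_K$ is a root subspace and the pointed quotient $K/L_K$ is a root cone. The forward direction is immediate, since the roots generating $K$ partition into those spanning $L_K$ and those whose images generate $K/L_K$; for the converse, any root $e_{ij}$ whose class generates a ray of $K/L_K$ satisfies $e_{ij}\in K+L_K=K$, so such roots together with a root spanning set of $L_K$ generate $K$. Applying this to $K=\sigma\cap\tau$ (so $L_K=L$) reduces the theorem to matching the failure of either condition (a) ``$L$ is a root subspace'' or (b) ``$(\sigma\cap\tau)/L$ is a pointed root cone'' with the presence of a primitive alternating cycle of length at least four in $\Gamma_{\sigma,\tau}$.

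For condition (a), I would observe that the undirected part of $\Gamma_{\sigma,\tau}$ is exactly $\Gamma_{L_\sigma}\cup\Gamma_{L_\tau}$, the upper undirected edges coming from $\Gamma_\sigma$ and the lower ones from $\Gamma^{\op}_\tau$ (reversing directed edges does not affect undirected ones). By \Cref{lem:cycles}, condition (a) fails precisely when this undirected subgraph has a chordless cycle of length at least four, and by the clique structure of its components (\Cref{lem:cliques}) such a cycle may be taken strictly alternating between $\Gamma_{L_\sigma}$ and $\Gamma_{L_\tau}$. I would then identify such a strictly alternating chordless undirected cycle with a primitive alternating cycle of $\Gamma_{\sigma,\tau}$ supported entirely on undirected edges: here $x_C\in L$, and chordlessness is exactly the statement that $x_C$ does not split off a shorter alternating cycle, which is primitivity in the sense of \Cref{def:primalt}. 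Thus (a) fails if and only if $\Gamma_{\sigma,\tau}$ admits a primitive alternating cycle of length $\geq 4$ using only undirected edges.

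For condition (b), once (a) holds we have $\Gamma_L=\Gamma_{L_\sigma}\cap\Gamma_{L_\tau}$ by \Cref{lem:spaceintersect}, and passing to the quotient contracts exactly the undirected edges lying in $L$, producing $\Gamma_{\bar\sigma},\Gamma_{\bar\tau}$ and the intersection graph $\Gamma_{\bar\sigma,\bar\tau}$ with $\bar\sigma\cap\bar\tau=(\sigma\cap\tau)/L$ pointed; \Cref{cor:pointed_cones_comp} then says (b) fails iff $\Gamma_{\bar\sigma,\bar\tau}$ has a primitive alternating cycle of length $\geq 4$. The remaining, and I expect hardest, step is to match these with the primitive alternating cycles of $\Gamma_{\sigma,\tau}$ that are \emph{not} supported entirely on undirected edges, through the contraction map. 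The main obstacle is the bookkeeping of this correspondence: projecting a primitive alternating cycle of $\Gamma_{\sigma,\tau}$ may contract several $L$-edges and shorten it, so I must check the image stays alternating, primitive, and of length $\geq 4$ (or collapses to a $2$-cycle already accounted for by a root); conversely, lifting a primitive cycle $\bar C$ of $\Gamma_{\bar\sigma,\bar\tau}$ forces me to reinsert undirected $L$-edges to reconcile the chosen representatives of contracted vertices inside their complete $\Gamma_L$-components, and I must verify that some lift is primitive of length $\geq 4$. Particular care is needed for edges that are undirected on one side but directed on the other, since these become directed in the quotient. Combining the three parts, absence of a primitive alternating cycle of length $\geq 4$ in $\Gamma_{\sigma,\tau}$ is equivalent to (a) and (b) holding simultaneously, hence to $\sigma\cap\tau$ being a root cone.
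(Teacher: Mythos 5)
Your two-step framework (split into: (a) $L=L_\sigma\cap L_\tau$ is a root subspace, tested on the undirected part of $\Gamma_{\sigma,\tau}$ via \Cref{lem:cycles}; (b) the pointed quotient $(\sigma\cap\tau)/L$ is a root cone, tested via \Cref{cor:pointed_cones_comp}) is exactly the paper's, and your opening splitting principle and your treatment of (a) are acceptable. The gap is that the step you yourself flag as the hardest --- the correspondence between primitive alternating cycles of length at least four in the quotient graph $\Gamma_{\bar\sigma,\bar\tau}$ and such cycles in $\Gamma_{\sigma,\tau}$ --- is never proved; you only record it as an obligation (``I must check\dots'', ``I must verify\dots''). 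This correspondence is precisely the substance of the paper's proof, which settles it with two short arguments: (i) a primitive alternating cycle in $\Gamma_{\sigma,\tau}$ meets each common undirected clique $K_i$ (a connected component of the intersection of the undirected parts of $\Gamma_\sigma$ and $\Gamma_\tau$) in at most one vertex, since otherwise one can produce a collection of alternating cycles (using the clique edges, which carry both an upper and a lower copy, together with transitive closure) whose upper heads and tails reproduce those of the original cycle, contradicting primitivity in the sense of \Cref{def:primalt}; hence projecting to the quotient contracts no vertex of a primitive cycle and preserves its length. (ii) Conversely, because $\Gamma_\sigma$ and $\Gamma_\tau$ are transitively closed (\Cref{lem:transitively_closed}) and each $K_i$ is an undirected clique in both, an edge exists between two cliques in the quotient if and only if it exists between \emph{every} pair of representatives; therefore a primitive alternating quotient cycle lifts edge-by-edge through any single choice of representative $v_j\in K_{i_j}$, preserving length, alternation, and primitivity.

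Worse, the lifting strategy you do sketch --- ``reinsert undirected $L$-edges to reconcile the chosen representatives'' --- cannot be made to work. In an alternating cycle the entry and exit edges at any vertex are consecutive, so one is upper and one is lower; a single inserted clique edge would then have to be simultaneously upper and lower to preserve alternation, which is impossible for one traversal. Inserting a longer path inside the clique instead yields a lift that visits two vertices of a common clique $K_i$, which by (i) is never primitive. So your proposed lift either fails to be alternating or fails to be primitive, and the implication ``primitive cycle of length $\geq 4$ in $\Gamma_{\bar\sigma,\bar\tau}$ $\Rightarrow$ primitive cycle of length $\geq 4$ in $\Gamma_{\sigma,\tau}$'' is not established. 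Until this equivalence is proved --- via the single-representative lift, not edge reinsertion --- your concluding sentence has nothing to combine.
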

\begin{proof}
	Let $K_1,\dots,K_r$ denote the components of the intersection of the undirected parts of $\Gamma_\sigma$ and $\Gamma_\tau$.
	These are cliques as both graphs are transitively closed.
	
	Step 1 of the strategy above looks for alternating cycles in the undirected part of the graph $\Gamma_{\sigma,\tau}$, which are primitive alternating cycles in $\Gamma_{\sigma,\tau}$ in the sense of Definition~\ref{def:primalt}. We need to check whether every such cycle of length at least four has a chord.
	
	Step 2 looks for primitive alternating cycles in the quotient of the graph $\Gamma_{\sigma,\tau}$ by the common undirected cliques $K_1\dots,K_r$. The existence of the primitive alternating cycle of length $\ell$ in this quotient is equivalent to the existence of the primitive alternating cycle of length $\ell$ in the original graph $\Gamma_{\sigma,\tau}$.
	This is indeed the case as a primitive cycle in $\Gamma_{\sigma,\tau}$ can involve at most one vertex of each undirected clique $K_1\dots,K_r$ as the cycle would not be primitive otherwise.
	
	For the other direction, let $C$ be a primitive alternating cycle in the quotient graph. Assume it is passing through vertices corresponding to cliques $K_{i_1},\dots,K_{i_s}$. Then a choice of a vertex $v_j\in K_{i_j}$ defines a corresponding primitive alternating cycle $\widetilde C$ in $\Gamma_{\sigma,\tau}$.

	Hence both tests above provide a positive result if and only if $\Gamma_{\sigma,\tau}$ does not have a primitive alternating cycle of length greater or equal to $4$.
\end{proof}

\begin{remark}
Theorem~\ref{thm:cone_criterion} give a potential criterion for compatibility of any pair of alcoved polytopes $P$ and $Q$ assuming we know their normal fans $\Sigma_P$ and $\Sigma_Q$. Indeed, $P+Q$ is alcoved if and only if $\tau\cap\sigma$ is a root cone for any pair $\tau\in \Sigma_P, \sigma\in \Sigma_Q$. In the next sections we carry out this strategy for the pairs of alcoved simplices, for which we have a convenient description for the cones of the normal fan. 

It is an interesting question  how to get a convenient description of the normal fan for more general alcoved polytopes. A step in this direction is given by \cite[Theorem 11]{joswig2016weighted} describing face lattice of braid cones (that is alcoved polyhedra which are cones).
\end{remark}

\section{Alcoved simplices}\label{sec:alcoved_simplices}

An \emph{ordered set partition}  of the set $\left[n\right]$ is an ordered tuple $\S = (B_1,\ldots, B_\ell)$ of pairwise disjoint subsets $B_i\subseteq \left[n\right]$ with $\cup_{j=1}^\ell B_j = \left[n\right]$.
We denote the set of ordered set partitions of $\left[n\right]$ by~$\osp(n)$.

Moreover, we use the shorthand notation $(1,23,4)$ for the ordered set partition $(\{1\},\{2,3\},\{4\})$ in $\osp(4)$.
An ordered set partition $\S = (B_1,\ldots, B_\ell)$ is called \emph{nondegenerate} if each block $B_i$ is a singleton, i.e.,\ contains exactly one element of $\lbrack n \rbrack$.

\begin{definition}\label{def:alcoved_simplex}
	To each ordered set partition $\S=(B_1,\dots,B_\ell)$ of $\lbrack n \rbrack$ we associate an \emph{alcoved simplex} $\Delta_\S$ in the hyperplane $\cH_n$ defined by the following set of (in)equalities in $\cH_n$:
	
	\begin{equation}\label{eq:simplex}
		\begin{aligned}
			x_{i}= x_{j}\quad \text{ for every }& i,j\in B_k \text{ and every }1\le k \le \ell,\\
			x_i\geq x_j \quad \text{ for every }& i\in B_k, \, j\in B_{k+1} \text{ and every }1\le k \le \ell-1,\\
			x_i\geq x_j-1 \quad \text{ for every }& i\in B_\ell, \, j\in B_1.
		\end{aligned}
	\end{equation}
	We denote by $\Sigma_{\S}$ the normal fan of the simplex $\Delta_\S$.
\end{definition}

\begin{definition}
	Let $\S=(B_1,\ldots,B_{\ell})$ be an ordered set partition of $[n]$. We define a graph $G_\S$ as a partially directed graph on $n$ vertices which has an undirected clique on the set $B_i$  for every $1\leq i \leq \ell$ and a directed edge $b_i\to b_{i+1}$ for  $1\leq i \leq \ell$ (regarded cyclically) where $b_j\in B_j$ is the smallest element of a block $B_j$.
\end{definition}

\begin{example}\label{ex:osp}
	The alcoved simplex $\Delta_{(1,23,4)}$ in $\R^4$ of the ordered set partition $(1,23,4)$ is defined by $x_1+\dots+x_4=0$ and the (in)equalities
	\[
	x_1\ge x_2=x_3\ge x_4\ge x_1-1.
	\]
	Its vertices are $(0,0,0,0)$, $\left(\frac{3}{4},-\frac{1}{4},-\frac{1}{4},-\frac{1}{4}\right)$ and $\left(\frac{1}{4},\frac{1}{4},\frac{1}{4},-\frac{3}{4}\right)$.
	The graph $G_{(1,23, 4)}$ is depicted in Figure~\ref{fig:graph_gamma_S}.
\end{example}

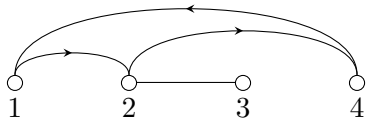
\begin{figure}[hbt]
	\begin{center}
		\begin{tikzpicture}[baseline=0,>=stealth,xscale=1.5]
			\foreach \x in {1,...,4}{
				\coordinate (\x) at (\x,0);
			}
			
			\draw  (2,-0.1) to (3,-0.1);
			\foreach \x in {1,2,3,4}{
				\draw (\x) node[below,fill=white,draw=black,circle,inner sep =2] {} node[below,yshift=-5pt] {$\x$};
			}
			\begin{scope}[decoration={
					markings,
					mark=at position 0.5 with {\arrow{>}}}
				]
				\draw[postaction={decorate}] (1) to [in=90,out=90] (2);

				\draw[postaction={decorate}] (4) to [in=90,out=90] (1);

				\draw[postaction={decorate}] (2) to [in=90,out=90] (4);
				
			\end{scope}
		\end{tikzpicture}
		\caption{The graph $G_\S$ of the ordered set partition $\S=(1,23,4)$.}
		\label{fig:graph_gamma_S}
	\end{center}
\end{figure}

As the next step we give a description of the normal fan $\Sigma_{\S}$ of $\Delta_\S$; this was taken as the definition of permutohedral blades, denoted $((B_1,\ldots, B_\ell))$ in \cite{early2018honeycomb}.

\begin{proposition}
	Let $\S=(B_1,\dots,B_\ell)$ be an ordered set partition.
	\begin{enumerate}[(i)]
	\item The normal fan $\Sigma_\S$ of $\Delta_\S$ has a lineality space $L_\S$ in $\cH_n$ spanned by the roots corresponding to undirected edges in $G_\S$:
	\[
	L_\S= \langle e_{ij}\mid \{i,j\}\in G_\S\rangle.
	\]
	\item The generators of the rays of $\Sigma_\S/L_\S$ are the directed edges of $G_\S$.
	The cones of $\Sigma_\S/L_\S$ are in bijection with the proper subsets of these rays.
	\item The polytope $\Delta_\S$ is an alcoved simplex of dimension $\ell-1$.
	\end{enumerate}
\end{proposition}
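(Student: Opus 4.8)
\section*{Proof proposal}

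The plan is to reduce everything to the quotient by the block equalities, where $\Delta_\S$ becomes a standard cyclic chain polytope. I would introduce one coordinate $y_k$ per block by setting $y_k = x_i$ for $i\in B_k$; this is well defined on the affine span of $\Delta_\S$ because the defining equations in~\eqref{eq:simplex} force $x_i=x_j$ within each block. In these coordinates the ambient equation $x_1+\dots+x_n=0$ becomes $\sum_{k=1}^\ell |B_k|\,y_k=0$, cutting out an $(\ell-1)$-dimensional affine space, and the inequalities of~\eqref{eq:simplex} collapse to the cyclic chain
\[
y_1\ge y_2\ge \cdots \ge y_\ell \ge y_1-1.
\]
This already shows $\Delta_\S$ lies in an $(\ell-1)$-dimensional affine space, and the chain forces all $y_k$ into a window of width $1$, so together with $\sum|B_k|y_k=0$ the region is bounded. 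To finish part (iii) I would compute the vertices by making all but one of the $\ell$ inequalities tight: dropping the wrap-around inequality gives the origin, while dropping the inequality between $B_m$ and $B_{m+1}$ gives the point with $y_1=\cdots=y_m=c$ and $y_{m+1}=\cdots=y_\ell=c-1$, where $c$ is fixed by $\sum|B_k|y_k=0$. This produces exactly $\ell$ vertices, so the bounded intersection of $\ell$ half-spaces inside an $(\ell-1)$-dimensional space is an $(\ell-1)$-simplex.

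For part (i), recall that the lineality space of a normal fan is the orthogonal complement inside $\cH_n$ of the direction space of the polytope. The affine span of $\Delta_\S$ has direction space $V_\S=\{v\in\cH_n : v_i=v_j \text{ whenever } i,j \text{ lie in a common block}\}$, of dimension $\ell-1$. Its orthogonal complement inside $\cH_n$ has dimension $(n-1)-(\ell-1)=n-\ell$ and is spanned by the within-block differences $e_i-e_j$ with $i,j\in B_k$: each such root is orthogonal to every block-constant vector, and these roots span a space of dimension $\sum_k(|B_k|-1)=n-\ell$. These are exactly the roots attached to the undirected edges of $G_\S$ (the undirected clique on each block), yielding $L_\S=\langle e_{ij}\mid \{i,j\}\in G_\S\rangle$.

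For part (ii), the rays of $\Sigma_\S/L_\S$ are the outer facet normals of the simplex, which I would read off from the chain inequalities. The facet where the inequality between $B_k$ and $B_{k+1}$ is tight has outer normal $e_{b_{k+1}}-e_{b_k}$, and the wrap-around facet has outer normal $e_{b_1}-e_{b_\ell}$; reconciling this with the edge--root convention identifies these $\ell$ normals precisely with the directed edges $b_k\to b_{k+1}$ of $G_\S$. The structural backbone is the single telescoping relation $\sum_{k=1}^\ell (e_{b_{k+1}}-e_{b_k})=0$ with all coefficients positive, together with the fact that any $\ell-1$ of these vectors are linearly independent (they are the edge vectors of a spanning path of the cycle). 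A configuration of $\ell$ rays in an $(\ell-1)$-dimensional space admitting a single strictly positive relation is exactly the ray set of the normal fan of a simplex: its cones are $\operatorname{cone}(e_{b_{k+1}}-e_{b_k}: k\in T)$ for proper subsets $T\subsetneq[\ell]$, the maximal cones ($|T|=\ell-1$) being the normal cones at the vertices found in part (iii). This gives the claimed bijection between cones and proper subsets of the rays.

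The main obstacle I anticipate is bookkeeping rather than conceptual: one must verify that each of the $\ell$ chain inequalities is irredundant (equivalently, that the $\ell$ vertices are affinely independent), and one must reconcile the orientation convention relating directed edges of $G_\S$ to the signed roots $e_{ij}$ so that the facet normals match the directed edges with the correct orientation. Both become routine once the telescoping relation $\sum_k(e_{b_{k+1}}-e_{b_k})=0$ and the linear independence of its proper sub-collections are in hand, which is why I would establish that relation first and use it as the backbone for both parts (ii) and (iii).
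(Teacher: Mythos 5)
Your proposal is correct and follows essentially the same route as the paper: both reduce the defining system to one representative per block (your $y_k$ coordinates are the paper's $x_{b_k}$), identify the resulting $\ell$ facet normals with the directed edges of $G_\S$, and conclude the simplex/fan structure from having $\ell$ rays with a single positive relation in the $(\ell-1)$-dimensional quotient. The one reconciliation you flagged is resolved by noting that the paper's $\Sigma_\S$ is the \emph{inner} normal fan, so the ray generators are $e_{b_k b_{k+1}} = e_{b_k}-e_{b_{k+1}}$ rather than your outer normals, and then they literally coincide with the directed edges $b_k \to b_{k+1}$ of $G_\S$.
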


\begin{proof}
	The claim $(i)$ follows directly from the construction of $\Delta_\S$ as an equation $x_i=x_j$ corresponds  to the undirected edge $\{i,j\}\in G_\S$ which yields the generator $e_{ij}$ in the lineality space.
	
	For $(ii)$ let $b_i$ be the smallest element in the block $B_i$ for every $1\le i \le \ell$. 
	Using the equations $x_i=x_j$~\eqref{eq:simplex} we can rewrite and reduce the inequalities in the definition of $\Delta_\S$ to
	\[
	x_{b_1}-x_{b_2}\ge 0, \quad \dots\quad  	x_{b_\ell-1}-x_{b_\ell}\ge 0,	\quad x_{b_\ell}-x_{b_1}\ge -1.
	\]
	The corresponding rays in $\Sigma_\S/L_\S$ thus exactly correspond to the directed edges of $G_\S$ and are generated by $e_{b_ib_{i+1}}$ for $1\le i \le \ell$ (regarded cyclically).
	
	As these are $\ell$ rays in $\cH_n/L_n$ which is of dimension $\ell-1$, $\Sigma_\S/L_\S$ is the normal fan of a simplex of dimension $\ell-1$.
	Thus, every cone of this fan arises by omitting at least one of these rays and every such proper subset of the rays generates a cone of this fan.
	As both the lineality space and the rays in the quotient fan are generated by roots, the polytope $\Delta_\S$ is alcoved which also proves $(iii)$.
\end{proof}

Note that by the above description, the normal fan $\Sigma_\S$ is invariant under cyclic shifts of $\S$.
\begin{corollary}
	Let $\S=(B_1,\dots,B_\ell)$ be an ordered set partition and let $\S'=(B_2,\dots,B_\ell,B_1)$ be the ordered set partition where the blocks are cyclically shifted to the left by one position.
	Then we have
	\[
	\Sigma_{\S}=\Sigma_{\S'}.
	\]
\end{corollary}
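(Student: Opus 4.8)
The plan is to deduce the equality of normal fans directly from the preceding description of $\Sigma_\S$ in terms of the graph $G_\S$. That description shows that $\Sigma_\S$ is completely determined by three pieces of data: the lineality space $L_\S$, the ray generators of the quotient fan $\Sigma_\S/L_\S$, and the combinatorial type of the cones, which is simply all proper subsets of the rays. Since this last piece of data is the same for any ordered set partition with $\ell$ blocks, it suffices to check that the cyclic shift leaves both $L_\S$ and the collection of ray generators unchanged.

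First I would observe that the graph $G_\S$ is itself invariant under the cyclic shift $\S\mapsto\S'$, i.e.\ that $G_\S=G_{\S'}$. For the undirected part this is immediate: $\S$ and $\S'$ have exactly the same blocks $B_1,\dots,B_\ell$ as sets, merely listed in a different order, so the undirected cliques placed on these blocks coincide. In particular the smallest element $b_i$ of each block is unchanged, since it depends only on the block as a set and not on its position in the tuple. For the directed part, the edges of $G_\S$ form the directed cycle $b_1\to b_2\to\dots\to b_\ell\to b_1$, whereas those of $G_{\S'}$ form $b_2\to b_3\to\dots\to b_\ell\to b_1\to b_2$; these are the same set of directed edges, since the defining rule --- join the smallest element of each block to that of the next block, read cyclically --- produces the same cyclic sequence of arrows regardless of where one starts reading the tuple.

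With $G_\S=G_{\S'}$ in hand, the preceding proposition gives $L_\S=L_{\S'}$, both being spanned by the roots of the common undirected edges, and identifies the ray generators of $\Sigma_\S/L_\S$ and of $\Sigma_{\S'}/L_{\S'}$ with the same set of directed edges $e_{b_ib_{i+1}}$. Since the cones of each quotient fan are exactly the proper subsets of this common set of rays, the two fans agree on the quotient and share the same lineality space, whence $\Sigma_\S=\Sigma_{\S'}$. There is essentially no obstacle here: the only point requiring a moment's care is that the block minima $b_i$ --- and hence the arrows between them --- are genuinely unaffected by the reordering, so that the directed cycle is the same cyclic object in both graphs.
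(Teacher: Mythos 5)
Your proof is correct and follows essentially the same route as the paper, which states the corollary as an immediate consequence of the preceding proposition's description of $\Sigma_\S$ via $G_\S$ (the paper gives no separate proof beyond noting this invariance). Your verification that $G_\S=G_{\S'}$ — same blocks as sets, same block minima, same cyclic sequence of arrows — is exactly the detail implicitly being invoked.
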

As we are mostly interested in the common subdivisions of the normal fans of alcoved simplices, we consider the ordered set partitions up to cyclic shifts.
Thus, we assume that for an ordered set partition $\S=(B_1,\dots,B_\ell)$ of $\lbrack n \rbrack$ we have $n\in B_\ell$.

\begin{example}
	We continue the discussion of the ordered set partition $\S=(1,23,4)$ introduced in Example~\ref{ex:osp}.
	It has the directed edges $\{1 \to 2,2 \to 4, 4\to 1\}$ and the undirected edge $\{2,3\}$.
	Thus the lineality space $L_\S$ is $1$-dimensional and spanned by $e_{23}$.
	The quotient fan $\Sigma_{\S}/L_\S$ thus has three maximal cones positively spanned by $\{e_{12},e_{24}\}$, $\{e_{12},e_{41}\}$ and $\{e_{24},e_{41}\}$.
	All other cones in this fan are the faces of these maximal cones.
\end{example}

As we discussed above, every set partition $\S$ yields an alcoved polytope $\Delta_\S$.
The next proposition states that up to shifts and dilations, all alcoved simplices arise in this way.
\begin{proposition}\label{prop:every_simplex}
	Every alcoved simplex in $\cH_n$ is equal to $\Delta_\S$ for some ordered set partition $\S$ of $[n]$ up to shift and dilation.
\end{proposition}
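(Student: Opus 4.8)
The plan is to characterize alcoved simplices intrinsically via their normal fans and then match these data to an ordered set partition. Since the simplices $\Delta_\S$ were built so that their normal fan $\Sigma_\S$ has lineality space $L_\S = \langle e_{ij} \mid \{i,j\} \in G_\S\rangle$ and exactly $\ell$ rays in the quotient generated by the cyclically arranged roots $e_{b_i b_{i+1}}$, I would prove the proposition by reversing this construction: starting from an arbitrary alcoved simplex $\Delta$, I read off its normal fan, extract a combinatorial gadget (a partition into blocks together with a cyclic order), and verify that the resulting ordered set partition $\S$ recovers $\Delta$ up to shift and dilation.

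Concretely, let $\Delta \subseteq \cH_n$ be an alcoved simplex of dimension $d$. By the alcoved reformulation in terms of normal fans, its lineality space $L$ is a root subspace and the rays of $\Sigma_\Delta / L$ are generated by roots. First I would use \Cref{lem:cliques}: the graph $\Gamma_L$ has connected components that are complete graphs, so they induce a partition of $[n]$ into blocks $B_1, \dots, B_\ell$ (the vertex sets of the cliques), and $L = \langle e_{ij} \mid i,j \text{ in the same block}\rangle$. Since $\Delta$ is a simplex of dimension $d$, the quotient fan $\Sigma_\Delta/L$ is the normal fan of a $d$-simplex in the $(\ell-1)$-dimensional space $\cH_n/L$, hence $d = \ell - 1$ and $\Sigma_\Delta/L$ has exactly $\ell$ rays. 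Each ray is generated by a root $e_{p_k q_k}$, where (modulo $L$) $p_k, q_k$ lie in distinct blocks; the key point is that a $d$-simplex has $d+1 = \ell$ facets whose inner normals sum to zero, so after rescaling the ray generators they satisfy a positive dependence $\sum_k c_k e_{p_k q_k} = 0$ in $\cH_n/L$. The main work is to show this forces the blocks to be arranged in a single cycle, i.e.\ that the roots can be indexed so that $e_{b_k b_{k+1}}$ (cyclically) generate the rays — this is exactly the structure of $G_\S$.

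For that cyclic-structure step I would argue that the unique (up to scaling) linear dependence among the $\ell$ ray-generating roots, projected to the quotient, must be a single directed cycle on the $\ell$ blocks: the support of the dependence, viewed as a multigraph on the blocks, is connected (since the $\ell$ rays span $\cH_n/L$ together with one relation) and every vertex must have both an incoming and outgoing root for the coefficients to be simultaneously positive, which on $\ell$ edges and $\ell$ vertices forces a Hamiltonian cycle through the blocks. This yields a cyclic ordering $B_1, \dots, B_\ell$ and identifies the rays with $e_{b_k b_{k+1}}$. Setting $\S = (B_1, \dots, B_\ell)$, the fan $\Sigma_\S$ then equals $\Sigma_\Delta$ by construction, and two polytopes in $\cH_n$ with the same normal fan agree up to translation and positive scaling — translating into $\cH_n$ and rescaling so the ``wrap-around'' inequality has the normalized right-hand side $-1$ exactly recovers $\Delta_\S$.

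I expect the main obstacle to be the cyclic-structure step: verifying that the positive linear dependence among the ray generators is forced to be a single cycle rather than a more complicated combination, and handling the bookkeeping of representatives $b_k$ in each block so that the orientation of the cycle is consistent and well-defined up to the cyclic shift already noted in the corollary preceding the statement. The rest — reading off the block partition from $\Gamma_L$ and using $d = \ell-1$ — is routine given the earlier lemmas, and the final ``up to shift and dilation'' reduction is standard polytope theory once the normal fans are shown to coincide.
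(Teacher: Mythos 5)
Your proposal is correct and follows essentially the same route as the paper: both reduce the statement to showing that the normal fan of an alcoved simplex equals $\Sigma_\S$ for some ordered set partition $\S$, and both extract the cyclic order from the unique, strictly positive linear dependence among the root generators of the rays (your connectivity-plus-degree-counting argument that this dependence forms a single directed Hamiltonian cycle is exactly the fact the paper asserts more tersely). The only difference is organizational: the paper first treats full-dimensional simplices and then reduces the lower-dimensional case to it by restricting to the affine span, whereas you handle both cases at once by passing to the quotient by the lineality space and reading the blocks off the cliques of $\Gamma_L$ --- a dual formulation of the same reduction.
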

\begin{proof}
	A simplex is determined up to translation and dilation by its normal fan.
	Hence it is enough to show that the normal fan of an alcoved simplices is the normal fan of the simplex $\Delta_\S$ for some set partitions $\S$.
	
	The normal fan of an $n-1$-dimensional simplex $\Delta$ has $n$ rays with exactly one linear dependency $\sum_{i=1}^n \lambda_i v_i=0$ between the ray generators $v_1,\ldots,v_n$.
	Moreover, the coefficients $\lambda_i$ in the above linear relation can be chosen to be strictly positive for all $i$.
	In other words, this means that $0$ belongs to the convex hull of $v_1,\dots, v_n$.
	
	Now, let $\Delta$ be a full-dimensional alcoved simplex in $\cH_n$. Then we can choose the generators of the rays $v_1,\dots,v_n$ of its normal fan to be roots.
	Therefore the linear dependency $\sum_{i=1}^n \lambda_i v_i=0$ defines a collection of cycles in $G_{\cH_n}$.
	However, since the above relation is unique and involves all roots $v_1,\dots,v_n$, it defines a unique oriented cycle of size $n$.
	This defines a cyclic order on $[n]$, i.e., a nondegenerate ordered set partition $\S$. By construction of $\Delta_\S$, we get that $\Sigma_\Delta= \Sigma_\S$, which finishes the proof for full-dimensional alcoved simplices.
	
	If $\Delta$ is an alcoved simplex in $\cH_n$ of strictly lower dimension we consider $\Delta$ in its affine span $\text{Aff}_\Delta$.
    By translating $\Delta$, we may assume that $\text{Aff}_\Delta$ is a linear subspace.
	As $\Delta$ is alcoved, the subspace $\text{Aff}_\Delta$ is defined by $x_{i_1}-x_{j_1}=\dots=x_{i_k}-x_{j_k}=0$ for a set of roots $e_{i_1j_1},\dots,e_{i_kj_k}$ with $i_s<j_s$ for $1\le s \le k$.
	Hence the simplex $\Delta$ is by definition a full-dimensional alcoved simplex in the subspace $\text{Aff}_\Delta$.
	So by the previous argument its normal fan in $\text{Aff}_\Delta$ agrees with the normal fan of some $\Delta_\S$ for some $\S$.
\end{proof}

The vertices of the simplex $\Delta_\S$ are not integral in general.
There is however a natural transformation that allows to represent these simplices as the Newton polytope $N_\S$ of a polynomial which we describe now.

Fix an ordered set partition $\mathbf{S} = (B_1,\ldots, B_\ell)$ of $\left[n\right]$.
As the normal fan of $\Delta_\S$ is invariant under cyclic rotations, we can assume that $n\in B_\ell$.
We define an $(\ell-1)$-dimensional simplex $N_\S$ in $\mathbb{R}^{n-1}$ with vertices 
\[
	0,e_{B_1},e_{B_1B_2},\ldots, e_{B_1B_2\cdots B_{\ell-1}}.
\]
This simplex can be represented as a Newton polytope via
\begin{eqnarray}\label{eqn:newton}
N_\mathbf{S} = \mathrm{Newt}\left(1+\sum_{i=1}^{\ell-1} \prod_{j=1}^{i} \prod_{k\in B_j}y_k\right).
\end{eqnarray}

 \begin{proposition}\label{prop:newton}
	 	Let  $\S=(B_1,\dots,B_\ell)$ be an ordered set partition of $[n]$ such that $n\in B_\ell$.
	 	
	 	The simplex $N_\S$ equals $\pi_{\cH_n}^{-1}(\Delta_\S)\cap \{x_n=0\}$ where $\pi_{\cH_n}:\R^n \to \cH_n$ is the orthogonal projection to the hyperplane $\cH_n$.
\end{proposition}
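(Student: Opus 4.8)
The plan is to recognize the right-hand side as a linear image of $\Delta_\S$ and then match vertices. The orthogonal projection $\pi_{\cH_n}$ has kernel the line $\R\mathbf{1}$ spanned by the all-ones vector $\mathbf{1}=(1,\dots,1)$ and fixes $\cH_n$ pointwise, so $\pi_{\cH_n}^{-1}(\Delta_\S)=\Delta_\S+\R\mathbf{1}$ is a prism over $\Delta_\S$ in the $\mathbf{1}$-direction. Since $\mathbf{1}$ is transverse to the hyperplane $\{x_n=0\}$ (its $n$-th coordinate is nonzero), each fiber $y+\R\mathbf{1}$ meets $\{x_n=0\}$ in exactly one point, namely $y-y_n\mathbf{1}$. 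Hence
\[
\pi_{\cH_n}^{-1}(\Delta_\S)\cap\{x_n=0\}=\phi(\Delta_\S),\qquad \phi\colon\cH_n\to\{x_n=0\},\ \ \phi(y)=y-y_n\mathbf{1},
\]
and $\phi$ is a linear isomorphism between the two hyperplanes (again because $\mathbf{1}\notin\cH_n$ and $\mathbf{1}\notin\{x_n=0\}$). As $\phi$ is linear and $\Delta_\S$ is a simplex, $\phi(\Delta_\S)$ is the convex hull of the images of the vertices of $\Delta_\S$, so it suffices to compute those vertices and show that $\phi$ carries them to the vertices $0,e_{B_1},\ldots,e_{B_1\cdots B_{\ell-1}}$ of $N_\S$.

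Next I would determine the vertices of $\Delta_\S$ explicitly. On $\Delta_\S$ each coordinate is constant on every block, say equal to $c_k$ on $B_k$, and the defining (in)equalities~\eqref{eq:simplex} reduce to $\sum_{k}|B_k|c_k=0$ together with the cyclic chain $c_1\ge c_2\ge\cdots\ge c_\ell\ge c_1-1$. Introducing the gap coordinates $d_k=c_k-c_{k+1}$ for $1\le k\le \ell-1$ and $d_\ell=c_\ell-c_1+1$, these constraints become $d_k\ge0$ for all $k$ and $d_1+\cdots+d_\ell=1$, which together with $\sum_k|B_k|c_k=0$ exhibits $\Delta_\S$ as a standard simplex in the $d$-coordinates. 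Its vertices are obtained by setting a single $d_k=1$ and all others to $0$: the choice $d_\ell=1$ forces $c_1=\cdots=c_\ell$, hence the origin $V_\ell=0$, while for $1\le k\le\ell-1$ the choice $d_k=1$ forces $c_1=\cdots=c_k$ and $c_{k+1}=\cdots=c_\ell=c_1-1$, and solving $\sum_k|B_k|c_k=0$ gives
\[
V_k=e_{B_1\cdots B_k}-\frac{|B_1\cup\cdots\cup B_k|}{n}\,\mathbf{1},\qquad 1\le k\le\ell-1,
\]
where $e_{B_1\cdots B_k}=\sum_{j\in B_1\cup\cdots\cup B_k}e_j$ is the indicator vector of the first $k$ blocks. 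One checks this against Example~\ref{ex:osp} for $\S=(1,23,4)$.

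Finally I would apply $\phi$. Because $n\in B_\ell$, the index $n$ lies in the complement $B_{k+1}\cup\cdots\cup B_\ell$ for every $k\le\ell-1$, so the $n$-th coordinate of $V_k$ equals $-|B_1\cup\cdots\cup B_k|/n$; subtracting this multiple of $\mathbf{1}$ exactly cancels the correction term and yields $\phi(V_k)=e_{B_1\cdots B_k}$, while $\phi(V_\ell)=\phi(0)=0$. Thus $\phi$ sends the vertices of $\Delta_\S$ precisely to $0,e_{B_1},e_{B_1B_2},\ldots,e_{B_1\cdots B_{\ell-1}}$, and these lie in $\{x_n=0\}$ since none of the sets $B_1,\ldots,B_{\ell-1}$ contains $n$. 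Taking convex hulls gives $\phi(\Delta_\S)=N_\S$, completing the proof. The only genuine computation is the vertex determination of $\Delta_\S$, and the gap-coordinate substitution makes it transparent; the point requiring care is that the translation built into $\phi$ is exactly the one needed to clear the denominators in the $V_k$, which is precisely what the hypothesis $n\in B_\ell$ guarantees.
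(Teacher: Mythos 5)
Your proof is correct, but it runs along a genuinely different track than the paper's. The paper stays entirely in the H-description: since the defining conditions \eqref{eq:simplex} involve only differences $x_i-x_j$, they are invariant under adding multiples of $\mathbf{1}=(1,\dots,1)$, so $\pi_{\cH_n}^{-1}(\Delta_\S)$ is cut out in $\R^n$ by those same (in)equalities; intersecting with $\{x_n=0\}$ and using $n\in B_\ell$ collapses the cyclic chain to $1\ge x_{B_1}\ge x_{B_2}\ge\dots\ge x_{B_\ell}=0$, from which the vertices $0,e_{B_1},\dots,e_{B_1\cdots B_{\ell-1}}$ are read off directly as the points making all but one inequality tight. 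You instead work with the V-description: you identify the preimage as the prism $\Delta_\S+\R\mathbf{1}$, build the explicit linear section $\phi(y)=y-y_n\mathbf{1}$ onto $\{x_n=0\}$, compute the vertices of $\Delta_\S$ inside $\cH_n$ via the gap coordinates $d_k$ (obtaining $V_k=e_{B_1\cdots B_k}-\tfrac{|B_1\cup\cdots\cup B_k|}{n}\mathbf{1}$), and check that $\phi$ carries them to the vertices of $N_\S$. The paper's route is shorter and avoids the fractional vertex computation altogether, and it yields the useful chain-of-inequalities description of $N_\S$ as a by-product; your route costs more computation but produces an explicit vertex formula for $\Delta_\S$ (generalizing Example~\ref{ex:osp}) and makes the affine isomorphism $\Delta_\S\cong N_\S$ concrete, which illuminates why the hypothesis $n\in B_\ell$ is exactly what cancels the $\mathbf{1}$-correction. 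One stylistic caution: your phrase ``exhibits $\Delta_\S$ as a standard simplex in the $d$-coordinates'' glosses over the fact that the $d$-coordinates determine a point of $\Delta_\S$ only after the normalization $\sum_k|B_k|c_k=0$ pins down the additive constant; your subsequent solving step handles this, but it is the affine bijectivity of $c\mapsto d$ on that hyperplane that lets you transport vertices, and it is worth saying so explicitly.
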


For example, the standard ordered set partition $\mathbf{S} = (1,2,3,4)$ has the Newton polytope $N_\S$ of the polynomial $1+y_1+y_1y_2+y_1y_2y_3$ which has the vertices $(0,0,0)$, $(1,0,0)$, $(1,1,0)$ and $(1,1,1)$.  
This simplex arises under the above transformation from the alcoved simplex
\[
\Delta_\mathbf{S} = \text{conv}\left\{(0,0,0,0),\left(\frac{3}{4},-\frac{1}{4},-\frac{1}{4},-\frac{1}{4}\right),\left(\frac{1}{2},\frac{1}{2},-\frac{1}{2},-\frac{1}{2}\right) ,\left(\frac{1}{4},\frac{1}{4},\frac{1}{4},-\frac{3}{4}\right)\right\}.
\]

\begin{proof}
	Following the inequalities in~\eqref{eq:simplex},
	the simplex  $\pi_{\cH}^{-1}(\Delta_\S)\cap \{x_n=0\}$ is defined by the inequalities
		\begin{alignat*}{5}
			x_{i}=& x_{j}\quad &\text{ for every } &i,j\in B_k \text{ and every }1\le k \le \ell,\\
			x_i\geq& x_j \quad &\text{ for every } &i\in B_k, \, j\in B_{k+1} \text{ and every }1\le k \le \ell-1,\\
			x_i\geq& x_j-1 \quad &\text{ for every }& i\in B_\ell, \, j\in B_1,\\
			x_n=&0.&&
	\end{alignat*}
	Setting $\S=(\{b_1^1,\dots,b_{i_1}^1\},\dots,\{b_1^\ell,\dots,b_{i_\ell}^\ell\})$ this system of inequalities simplifies to 
	\[
		1\geq x_{b_1^1}=\dots = x_{b_{i_1}^1} \geq x_{b_1^2}=\dots = x_{b_{i_2}^2}\geq \dots \geq x_{b_1^{\ell-1}}=\dots = x_{b_{i_{\ell-1}}^{\ell-1}}\geq x_{b_1^{\ell}}=\dots = x_{b_{i_{\ell}}^{\ell}}=0.
	\]
	Thus,  $\pi_{\cH}^{-1}(\Delta_\S)\cap \{x_n=0\}$ is an $(\ell-1)$-dimensional simplex in $\R^{n-1}$ (after dropping the last coordinate).
	Moreover, the points $0,e_{B_1},e_{B_1B_2},\ldots, e_{B_1B_2\cdots B_{\ell-1}}$ all fulfill the above inequalities with equality in all but one case.
	Hence these are the $\ell$ vertices of $\pi_{\cH}^{-1}(\Delta_\S)\cap \{x_n=0\}$ which proves that this simplex is equal to $N_\S$.
\end{proof}

As this proof showed the simplex $N_\S$ is not alcoved on the nose as the coordinate vectors also appear as normal vectors.
They are however ``almost alcoved'' in the sense that all normal vectors are either roots or coordinate vectors.
They become actually alcoved by passing to the associated alcoved simplex $\Delta_\S$ under the above construction.

\section{Minkowski sums of alcoved simplices}\label{sec:compatibility}

\subsection{Graphical criterion}

In this section, we discuss a graphical criterion to detect when the Minkowski sum of two alcoved simplices $\Delta_\S$ and $\Delta_\T$ is again an alcoved polytope.

\begin{definition}
	We call two ordered set partitions $\S$ and $\T$ \emph{compatible} if the Minkowski sum $\Delta_\S+\Delta_\T$ is alcoved.
\end{definition}

We now turn to the graph theoretic side of the story.

\begin{definition}
	Let $\S, \T$ be two ordered set partitions on $\left[n\right]$.
	Let us define the partially ordered graph $G_{\S,\T}$ to be the union $G_{\S}\cup G_{\T}^{op}$ where in $G_{\T}^{op}$ all directed edges are reversed.
	We call edges in $G_\S$ \emph{upper} and those in $G_{\T}^{op}$ \emph{lower}.
	
	Let $C$ be a cycle in $G_{\S,\T}$.
	An \emph{upper path segment} of $C$ is a collection of  consecutive upper edges in $C$.
	We call a cycle \emph{violating} if it has at least two disjoint upper path segments and visits every vertex of $G_{\S,\T}$ at most once.
\end{definition}

These graphs allow us to prove a graph theoretic criterion of compatible partitions.
\begin{theorem}\label{thm:violating_cycles}
	The ordered set partitions $\S, \T$ on $\left[n\right]$ are compatible if and only if $G_{\S,\T}$ does not have a violating cycle.
\end{theorem}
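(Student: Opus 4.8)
The plan is to reduce the compatibility of $\Delta_\S$ and $\Delta_\T$ to a statement about root cones via the general machinery of \Cref{sec:root_cones}, and then translate the primitive-alternating-cycle criterion of \Cref{thm:cone_criterion} into the language of violating cycles in $G_{\S,\T}$. First I would recall that $\Delta_\S+\Delta_\T$ is alcoved if and only if its normal fan is alcoved, and the normal fan of the Minkowski sum is the common refinement $\Sigma_\S \cap \Sigma_\T$. A polytope is alcoved exactly when every cone of its normal fan is a root cone (lineality space a root subspace, rays generated by roots), so $\Delta_\S+\Delta_\T$ is alcoved if and only if $\sigma \cap \tau$ is a root cone for every pair of cones $\sigma \in \Sigma_\S$ and $\tau \in \Sigma_\T$. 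By \Cref{thm:cone_criterion}, this fails precisely when some intersection graph $\Gamma_{\sigma,\tau}$ has a primitive alternating cycle of length at least four. So the crux is to match these $\Gamma_{\sigma,\tau}$ cycles with violating cycles in the single fixed graph $G_{\S,\T}$.

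The key bookkeeping step is to understand how the cones $\sigma$ and $\tau$ of the two normal fans relate to the graphs $G_\S$ and $G_\T$. Recall from the preceding propositions that the rays of $\Sigma_\S/L_\S$ correspond to the directed edges of $G_\S$ (the cyclic edges $b_i \to b_{i+1}$), while the lineality directions correspond to the undirected clique edges; moreover the cones of $\Sigma_\S$ are indexed by proper subsets of the rays. Thus a cone $\sigma \in \Sigma_\S$ is obtained by choosing a subset of the directed edges of $G_\S$ to keep (the others contribute to the lineality as their reversals become available), and $\Gamma_\sigma$ is the transitive closure of the resulting partially directed graph. The plan is to show that, as $\sigma$ ranges over all cones of $\Sigma_\S$ and $\tau$ over all cones of $\Sigma_\T$, the resulting graphs $\Gamma_{\sigma,\tau}$ range exactly over the transitive closures of subgraphs of $G_{\S,\T}$ obtained by selecting edge orientations, and that a primitive alternating cycle of length $\ge 4$ appears in some such $\Gamma_{\sigma,\tau}$ if and only if $G_{\S,\T}$ contains a violating cycle. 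The two disjoint upper path segments in the definition of a violating cycle correspond precisely to the alternation between $\Gamma_\sigma$-edges and $\Gamma_\tau^{\op}$-edges forced by primitivity, after contracting the common undirected cliques as in the proof of \Cref{thm:cone_criterion}.

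I would organize the argument in two directions. For the forward direction, suppose $G_{\S,\T}$ has a violating cycle $C$; I would use its upper path segments to read off a cone $\sigma$ (keeping exactly the directed $G_\S$-edges traversed by $C$) and a cone $\tau$ (keeping exactly the directed $G_\T$-edges), so that $C$, after replacing each monotone segment by a single transitive-closure edge, becomes a primitive alternating cycle of length at least four in $\Gamma_{\sigma,\tau}$; by \Cref{thm:cone_criterion} the intersection $\sigma \cap \tau$ is then not a root cone, so $\Delta_\S + \Delta_\T$ is not alcoved. For the converse, I would start from a primitive alternating cycle of length $\ge 4$ in some $\Gamma_{\sigma,\tau}$ and lift it back to a closed walk in $G_{\S,\T}$, using transitivity to replace each closure edge by the corresponding path of generators in $G_\S$ or $G_\T$; the resulting walk furnishes the two disjoint upper segments, and primitivity guarantees the walk can be taken to visit each vertex at most once, yielding a genuine violating cycle.

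The hard part will be the careful translation between ``cones of $\Sigma_\S$'' and ``orientation choices on $G_\S$,'' together with the verification that contracting the undirected cliques (Step 2 in the proof of \Cref{thm:cone_criterion}) does not destroy or spuriously create violating cycles. In particular I expect the subtle point to be ensuring that the primitivity condition on alternating cycles corresponds exactly to the requirement that a violating cycle visit each vertex at most once and decompose into at least two disjoint upper path segments, rather than a single long segment that could be split inside a clique. Managing the interaction between the lineality directions of the two simplices (which may share roots, so $L_\S \cap L_\T$ must itself be checked to be a root subspace) and the pointed quotient cones will require invoking both halves of \Cref{thm:cone_criterion}, and this compatibility of the two reduction steps is where the proof demands the most care.
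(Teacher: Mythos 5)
Your proposal follows essentially the same route as the paper's proof: both reduce compatibility to the statement that every pairwise intersection of normal cones $\sigma\in\Sigma_\S$, $\tau\in\Sigma_\T$ is a root cone, invoke Theorem~\ref{thm:cone_criterion} to detect failure via primitive alternating cycles of length at least four in $\Gamma_{\sigma,\tau}$, and translate such cycles into violating cycles of $G_{\S,\T}$ (and back) by contracting monotone path segments through the transitive closure, with primitivity matching exactly the conditions that the cycle visit each vertex at most once and have at least two disjoint upper segments. One minor correction that does not affect the argument: a cone of $\Sigma_\S$ is generated by a proper subset of the directed edges of $G_\S$ together with the \emph{full} lineality space $L_\S$ --- omitted rays do not ``contribute to the lineality''; their reversals can appear in $\Gamma_\sigma$ only as directed edges arising from transitive closure when the kept edges supply a directed path.
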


\begin{proof}
First suppose $\S$ and $\T$ are not compatible.
So by definition, $\Delta_\S+\Delta_\T$ is not alcoved:

Suppose the intersection of the lineality spaces $L_\S\cap L_\T$ is not a root subspace.
Then by \Cref{lem:cycles} there is a cycle $C$ of length at least four without a chord in the graph $\Gamma_{L_\S}\cup \Gamma_{L_\T}$.
This graph coincides with the undirected part of the graph $G_{\S,\T}$ and thus $C$ is a violating cycle in $G_{\S,\T}$:
We can assume that $C$ visits every vertex at most once as we can otherwise decompose it into two smaller chordless cycles one of which of length at least four.
Secondly, it must have at least two distinct upper path segments as the upper and lower segments are cliques by~\Cref{lem:cliques} which would thus yield chords in $C$ if there was only one upper or lower segment.

So we now assume that the common lineality space $L_\S\cap L_\T$ is a root subspace.
In this case, taking the quotient $\Sigma_\S\cap\Sigma_\T/L_\S\cap L_\T$ corresponds to contracting all undirected edges in $G_{\S,\T}$ that appear both as upper and as lower edges.
Thus we can now assume that $\Delta_\S+\Delta_\T$ is a full-dimensional polytope.
Hence the assumption that $\Delta_\S+\Delta_\T$ is not alcoved means that there exist two normal cones $\sigma\in \Sigma_\S$ and $\tau\in \Sigma_\T$ such that there is a ray $\rho$ in $\sigma\cap\tau$ which is not a root.
By Theorem~\ref{thm:cone_criterion} we hence have a primitive alternating cycle $C_\Gamma$ in $\Gamma_{\sigma,\tau}$ of length at least $4$.
As every edge in $\Gamma_{\sigma,\tau}$ is a path of consecutive edges in $G_{\S,\T}$, the cycle $C_\Gamma$ yields a corresponding cycle $C_G$ in $G_{\S,\T}$.
More precisely, the edges in $C_\Gamma$ give rise to sequences of upper and lower edges in $G_{\S,\T}$ where all appearing edges are also present in the graph $\Gamma_{\sigma,\tau}$ and the concatenation of these sequences is a cycle $C_G$ in $G_{\S,\T}$.

We claim that $C_G$ is violating.
Suppose there is a vertex $v$ in $G_{\S,\T}$ that is visited at least twice in the cycle $C_G$.
By splitting up the cycle $C_G$ at the vertex $v$ we can decompose the cycle into the two cycles $C_1$ and $C_2$ in $G_{\S,\T}$ whose concatenation is $C_G$.
These cycles with the same edges form also two cycles in $\Gamma_{\sigma,\tau}$ which in total have the same upper heads and tails as $C_\Gamma$.
Therefore, $C_\Gamma$ is not primitive which contradicts our assumption.
As $C_\Gamma$ is also alternating of length at least $4$, the cycle $C_G$ thus has at least two disjoint upper path segments which in total means that $C_G$ is violating in $G_{\S,\T}$.

For the converse direction of the proof, suppose that $G_{\S,\T}$ has a violating cycle $C_G$.
As $C_G$ visits every vertex of $G_{\S,\T}$ at most once, it can not contain all upper edges or all lower edges of $G_{\S,\T}$.
Therefore, the cycle $C_G$ is also a cycle in the cone graph $\Gamma_{\sigma,\tau}$ where $\sigma\in \Sigma_\S$ and $\tau\in \Sigma_\T$ are the normal cones spanned by the rays corresponding to the upper  and lower edges of $C_G$ (we take the linear spans of the undirected edges transversed by $C_G$), respectively.
As $\Gamma_{\sigma,\tau}$ contains the transitive closure of its edges, the cycle $C_G$ yields an alternating cycle $C_\Gamma$ in $\Gamma_{\sigma,\tau}$ by replacing every path segment by one edge.
Since $C_G$ has at least two disjoint upper path segments the cycle $C_\Gamma$ has length at least $4$.
As $C_G$ visits every vertex at most once, the only vertices in $\Gamma_{\sigma,\tau}$ where the target of an upper edge meets the source of a lower edge are the ones where the upper and lower path segments are concatenated in $C_G$, i.e., the vertices where the cycle switches from the upper to the lower edges or vice versa.
This fact implies that all cycles in  $\Gamma_{\sigma,\tau}$ have their upper to lower and lower to upper switches at the same positions as the cycle $C_\Gamma$.
Thus,  the cycle $C_\Gamma$ is primitive.
Therefore, by Theorem \ref{thm:cone_criterion}, the intersection $\sigma\cap \tau$ is not a root cone and thus $\Sigma_\S\cap \Sigma_\T$ is not the normal fan of an alcoved polytope, i.e., $\S$ and $\T$ are not compatible.
\end{proof}

\subsection{Compatibility of full-dimensional simplices}
For $\S$ in $\osp(n)$ and a subset $I\subset [n]$ we denote the restriction of $\S$ to $I$  by $\S|_I$.

\begin{definition}
	We call $\S,\T \in \osp(n)$ \emph{4-interlaced} if there exist 4 distinct elements $a,b,c,d\in [n]$ such that the ordered set partitions of $\S$ and $\T$ restrict to respectively
	\[
	\S|_{a,b,c,d} =(a,b,c,d)\quad\text{ and } \quad \T|_{a,b,c,d} =(c,b,a,d).
	\]
	
	We say that $\S$ and $\T$ are \emph{6-interlaced} if there exist 6 distinct elements $a,b,c,d, e, f\in [n]$ such that the ordered set partitions of $\S$ and $\T$ restricts respectively to one of the two pairs
	\[
	\S|_{a,b,c,d,e,f}=(a,b,c,d,e,f) \quad\text{ and } \quad \T|_{a,b,c,d,e,f}=(c,d,a,b,e,f).
	\]
	\[
	\S|_{a,b,c,d,e,f}=(a,b,c,d,e,f) \quad\text{ and } \quad \T|_{a,b,c,d,e,f}=(a,d,e,b,c,f);
	\]
\end{definition}

Remarkably, these three cases completely characterize compatible nondegenerate partitions.
\begin{theorem}\label{thm:interlaced}
	Let $\S$ and $\T$ be two nondegenerate ordered set partitions. 
	Then $\S$ and $\T$ are not compatible if and only if they are $4$- or $6$-interlaced.
	
	In particular, $\S$ and $\T$ are compatible if and only if $\S_I$ and $\T_I$ are compatible for any $I$ of size at most 6.
\end{theorem}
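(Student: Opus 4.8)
The plan is to combine the graphical criterion of \Cref{thm:violating_cycles} with the rigidity coming from nondegeneracy. Since every block of $\S$ and $\T$ is a singleton, $G_\S$ and $G_\T^{\op}$ are the two directed Hamiltonian cycles recording the cyclic orders of $\S$ and $\T$, so in $G_{\S,\T}$ every upper edge is a one-step $\S$-successor and every lower edge a one-step $\T$-predecessor. By \Cref{thm:violating_cycles}, $\S$ and $\T$ are incompatible iff $G_{\S,\T}$ has a violating cycle, and by the proof of that theorem this is equivalent to the existence of cones $\sigma\in\Sigma_\S$, $\tau\in\Sigma_\T$ (each a \emph{cut} of the respective cyclic order) whose cone graph $\Gamma_{\sigma,\tau}$ carries a primitive alternating cycle of length $\ge 4$, as in \Cref{cor:pointed_cones_comp}. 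I would work throughout in $\Gamma_{\sigma,\tau}$: within a cut, the upper and the lower parts are each a transitive tournament on the relevant vertex set, so any two vertices are joined by exactly one upper and one lower chord, and restricting a primitive alternating cycle to its set $I$ of corner vertices (where upper meets lower) yields a primitive alternating cycle in $\Gamma_{\sigma|_I,\tau|_I}$ on the induced orders $\S|_I,\T|_I$. Thus the witness of incompatibility always involves only the letters of $I$, which simultaneously reduces the classification to small $I$ and supplies the ``in particular'' restriction statement once $|I|\le 6$ is established.

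For the direction ``interlaced $\Rightarrow$ incompatible'' I would use the permutation $g=\mathrm{pred}_\T\circ\mathrm{succ}_\S$ built from the induced cyclic orders on the $\le 6$ relevant letters. A strictly alternating $2m$-cycle $v_1\xrightarrow{U}v_2\xrightarrow{L}v_3\cdots$ is precisely a point of period $m$ under $g$ whose $2m$ vertices are distinct. Computing $g$ for the three normal forms gives the cycles directly: for $(a,b,c,d)$ versus $(c,b,a,d)$ one finds the $2$-cycles $\{a,c\},\{b,d\}$ of $g$ and the violating $4$-cycle $a\xrightarrow{U}b\xrightarrow{L}c\xrightarrow{U}d\xrightarrow{L}a$; each $6$-interlaced normal form produces a $3$-cycle of $g$ and an explicit strictly alternating violating $6$-cycle (e.g.\ for $(a,b,c,d,e,f)$ versus $(c,d,a,b,e,f)$ the orbit $\{b,f,d\}$ gives $b\xrightarrow{U}c\xrightarrow{L}f\xrightarrow{U}a\xrightarrow{L}d\xrightarrow{U}e\xrightarrow{L}b$). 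Extending the cuts arbitrarily to the full fans, the chords among these $\le 6$ letters survive in $\Gamma_{\sigma,\tau}$ and the cycle stays primitive (any decomposition would have to cover the same upper heads and tails, which live among those letters), so $\sigma\cap\tau$ acquires the non-root ray $x_C$ and $\S,\T$ are incompatible.

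The substantive direction is the converse, where the key claim is that a minimal (fewest-corner) primitive alternating cycle has at most $6$ corners, i.e.\ $m\le 3$. I would argue by contradiction: given a minimal such cycle with $2m\ge 8$ corners, its upper edges are pairwise disjoint forward arcs of the \emph{single} $\S$-circle and its lower edges are pairwise disjoint arcs of the \emph{single} $\T$-circle. The aim is to exhibit a chord whose type is compatible with the cyclic arrangement so that splitting along it produces a strictly shorter alternating cycle that is still primitive, contradicting minimality. Concretely I would take the $\S$-maximal corner, which must be the head $v_{2i}$ of an upper arc, and analyze the $\S$- and $\T$-positions of its cycle-neighbours to locate such a reducing chord. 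This is exactly where the hypothesis that both edge-families come from single Hamiltonian cycles is used; \Cref{rmk:non_reduction} shows the analogous reduction fails for general root subspaces, so I expect this step to be the main obstacle and to require a genuine case analysis of how the two circles cross, rather than the one-line chord-splitting that sufficed for \Cref{cor:pointed_cones_comp}.

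Finally, with $m\le 3$ in hand, I would classify minimal cycles up to the evident symmetries: independent cyclic rotation of $\S$ and of $\T$ (which preserves $\Sigma_\S,\Sigma_\T$), global reversal of both orders, and the swap $\S\leftrightarrow\T$ (which replaces $g$ by $g^{-1}$ and exchanges upper with lower). For $m=2$ the four corners carry two total orders admitting, up to these symmetries, a unique strictly alternating configuration, namely $\S|_I=(a,b,c,d)$, $\T|_I=(c,b,a,d)$, the $4$-interlaced pattern. For $m=3$ the same reduction leaves exactly the two inequivalent configurations listed, the two $6$-interlaced normal forms. Since a strictly alternating cycle has even length, no minimal violating cycle has an odd number of corners, so sizes $4$ and $6$ are the only possibilities and nothing new occurs at size $5$. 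Chaining the equivalences, $\S,\T$ are incompatible iff a violating cycle exists, iff (by the size bound) one exists on some $I$ with $|I|\le 6$, iff $\S|_I,\T|_I$ are incompatible for some such $I$, iff $\S,\T$ are $4$- or $6$-interlaced, which is precisely the theorem together with its restriction restatement.
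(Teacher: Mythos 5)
Your overall architecture matches the paper's (the violating-cycle criterion of \Cref{thm:violating_cycles}, explicit cycles for ``interlaced $\Rightarrow$ incompatible'' exactly as in \Cref{prop:interlacing}, and a reduction-to-small-sets argument plus a finite classification as in \Cref{prop:le_6}), but the heart of the theorem is missing. The step you label as ``the main obstacle'' --- showing that any witness of incompatibility can be reduced to one on at most $6$ letters --- is precisely the substantive content of the hard direction, and you do not prove it; you only announce that a case analysis of how the two circles cross should yield a ``reducing chord'' at the $\S$-maximal corner. A plan that defers exactly the difficult step is not a proof.

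Worse, the mechanism you propose for that step appears to be structurally inadequate, not merely unfinished. Your minimality argument wants to split a primitive alternating cycle with $2m\ge 8$ corners along a chord \emph{present in the given configuration}. But there are incompatible pairs for which the (unique, up to complementation) violating cycle passes through all $n$ vertices and admits no chord at all: these are the rigid families classified in \Cref{prop:13stepsinterlaced}, e.g.\ $\S=(1,2,\ldots,n)$ and $\T$ with steps alternating $1,3$ for $n=4k+2$. For such pairs the small witness on $\{1,\ldots,6\}$ produced in \Cref{cor:13stepsinterlaced} is \emph{not} a sub-cycle or chord-splitting of the long cycle; it only exists after restricting the ground set, which creates new adjacencies in $G_{\S|_I,\T|_I}$ (for instance new $\T$-consecutive pairs) and hence entirely new cycles. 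This is why the paper's reduction is organized as a dichotomy: if some step $s_i\notin\{1,3\}$, two vertices can be deleted (again using restriction, plus the complementary-cycle trick, to merge upper edges), and otherwise the order $\T$ is globally rigid and is shown to be $4$- or $6$-interlaced by a direct, separate computation rather than by shrinking the original cycle. Your proposal contains no analogue of either branch. Relatedly, your claim that restricting a primitive alternating cycle to its corner set $I$ stays primitive in $\Gamma_{\sigma|_I,\tau|_I}$ is not justified: restriction enlarges the edge set, so primitivity can a priori be destroyed; the paper avoids this by working with violating cycles in $G_{\S,\T}$, where the restriction step (to \emph{all} vertices of the cycle, then to segment endpoints) is transparent. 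Finally, the $m=2,3$ classification ``up to the evident symmetries'' is asserted rather than carried out; the paper does this by explicit enumeration of the possible cycles and of the orders $\T$ compatible with them in \Cref{prop:le_6}.
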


\Cref{thm:interlaced} is a refinement of~Theorem~\hyperref[thm:B]{B} in the case of full-dimensional simplices. It will be used as the main step in the proof of general statement of Theorem~\hyperref[thm:B]{B} in the next section.

\begin{remark}\label{rmk:pattern_avoidance}
Theorem~\ref{thm:interlaced} links compatibility of full-dimensional alcoved simplices to pattern avoidance in cyclic permutations. Pattern avoidance in linear permutations is an active research topic in the past few decades \cite{kitaev2011patterns}. The study of pattern avoidance in cyclic permutations was initiated by Callan in \cite{callan2002pattern} and later studied in \cite{elizalde2021consecutive,mansour2021enumerating,li2021vincular}. Indeed, a pair of nondegenerate set partitions (or  cyclic orders) $\S$ and $\T$ defines a cyclic permutation $\pi_{\S,\T}$. Moreover, $\S$ and $\T$ are $4$--interlaced if $\pi_{\S,\T}$ contains the pattern $1432$ and $6$-interlaced if it contains the patterns  $125634$ or $145236$. Thus, by Theorem~\ref{thm:interlaced}, nondegenerate set partitions $\S$ and $\T$ are compatible if and only if $\pi_{\S,\T}$ is avoiding the above three patterns. In particular, in \cite{callan2002pattern} Callan showed that the number of cyclic permutations of length $n$ avoiding  the pattern $1432$  is equal to $2^n + 1 - 2n -\binom{n}{3}$.
\end{remark}

We begin by proving that interlaced pairs are indeed incompatible.

\begin{proposition}\label{prop:interlacing}
	Let $\S$ and $\T$ be two nondegenerate set partitions. If $\S$ and $\T$ are $4$- or $6$-interlaced, then $\S$ and $\T$ are not compatible.
\end{proposition}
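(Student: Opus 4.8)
The plan is to invoke the violating--cycle criterion of Theorem~\ref{thm:violating_cycles}: to prove that $\S,\T$ are incompatible it suffices to exhibit a violating cycle in $G_{\S,\T}$. Since $4$-- and $6$--interlacing are conditions on the restrictions $\S|_I,\T|_I$ to a set $I$ of size $4$ or $6$, I would split the argument into two parts: (a) a reduction showing that compatibility descends to restrictions, so that incompatibility of $\S|_I,\T|_I$ forces incompatibility of $\S,\T$; and (b) the explicit construction of violating cycles for the (very small) restricted pairs, where there are no intermediate vertices to interfere.

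For part (a) I would prove the contrapositive: if $\S,\T$ are compatible then $\S|_I,\T|_I$ are compatible for every $I\subseteq[n]$. I would argue on the Newton--polytope side of Proposition~\ref{prop:newton}. The assignment $\Delta_\S\mapsto N_\S$ is the restriction to $\{x_n=0\}$ of the fixed linear projection along $\mathbf{1}$, hence a linear isomorphism $\phi$ with $\phi(\Delta_\S+\Delta_\T)=N_\S+N_\T$; as noted in the paper right after Proposition~\ref{prop:newton}, a polytope $P$ is alcoved exactly when $\phi(P)$ is \emph{almost alcoved}, i.e.\ has only root and coordinate-vector facet normals. After relabelling so that $n\in I$, restriction to $I$ becomes the coordinate projection $p$ forgetting the variables $y_k$ with $k\notin I$, and $p(N_\S)=N_{\S|_I}$ because projecting a chain simplex onto a subset of coordinates yields the chain simplex on the remaining coordinates. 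Writing the inequalities of an almost-alcoved polytope as $y_i-y_j\le c_{ij}$ with a phantom coordinate $y_0=0$, a single Fourier--Motzkin elimination of $y_k$ replaces $y_i-y_k\le c_{ik}$ and $y_k-y_j\le c_{kj}$ by the triangle inequality $y_i-y_j\le c_{ik}+c_{kj}$, again of root type; hence $p$ preserves almost-alcovedness. Since $p$ commutes with Minkowski sums, $p(N_\S+N_\T)=N_{\S|_I}+N_{\T|_I}$ is almost alcoved, i.e.\ $\S|_I,\T|_I$ are compatible.

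For part (b) I would write down the violating cycles directly in the small graphs $G_{\S|_I,\T|_I}=G_{\S|_I}\cup G_{\T|_I}^{\op}$. For a $4$-interlaced pair the upper edges are $a\to b\to c\to d\to a$ and the lower edges are $a\to b,\ b\to c,\ c\to d,\ d\to a$; the cycle $a\to b\to c\to d\to a$ using upper edges $a\to b,\,c\to d$ and lower edges $b\to c,\,d\to a$ is simple and has the two disjoint upper segments $\{a\to b\}$ and $\{c\to d\}$, hence is violating. For the first $6$-interlaced case $\T|_I=(c,d,a,b,e,f)$, the lower edges include $a\to d,\ e\to b,\ c\to f$, and the cycle $f\to a\to d\to e\to b\to c\to f$ with upper edges $f\to a,\,d\to e,\,b\to c$ and lower edges $a\to d,\,e\to b,\,c\to f$ is simple with three upper segments. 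For the second case $\T|_I=(a,d,e,b,c,f)$, the lower edges include $b\to e,\ f\to c,\ d\to a$, and the cycle $a\to b\to e\to f\to c\to d\to a$ with upper edges $a\to b,\,e\to f,\,c\to d$ and lower edges $b\to e,\,f\to c,\,d\to a$ works. In each case all vertices are distinct and there are at least two disjoint upper path segments, so Theorem~\ref{thm:violating_cycles} shows $\S|_I,\T|_I$ are incompatible, and part (a) upgrades this to incompatibility of $\S,\T$.

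The main obstacle is the reduction (a). The tempting shortcut---lifting the small violating cycle to $G_{\S,\T}$ by replacing each edge $x\to y$ with the corresponding arc along the $\S$-- or $\T$--cycle---fails, because an upper arc and a lower arc can share a vertex of $[n]\setminus I$, so the lifted closed walk need not be simple and need not be a cycle in the sense required. Arguing instead on the inequality description via Fourier--Motzkin elimination (equivalently, the weighted-digraph/shortest-path structure underlying alcoved polyhedra) bypasses the graph entirely; the only delicate points are bookkeeping of which facet normals can occur (roots versus coordinate vectors) and the choice of reference coordinate when $n\notin I$, both of which are routine after a relabelling.
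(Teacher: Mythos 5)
Your proposal is correct, and its combinatorial core is identical to the paper's: the three cycles you exhibit in part (b) are exactly the violating cycles in the paper's own proof (for instance $a\upto b\downto c\upto d\downto a$ in the $4$-interlaced case, and the same two hexagonal cycles in the $6$-interlaced cases). The genuine difference is your part (a). The paper writes its cycles as cycles ``in $G_{\S,\T}$,'' which is only literally correct when the interlacing witnesses are consecutive in both $\S$ and $\T$; in general the edge $a\to b$ need not be present in $G_\S$ at all, and, as you rightly stress, replacing each edge by the corresponding arc can fail to produce a violating cycle, since an upper and a lower arc may share a vertex outside $I$. (For example, with $\S=(1,5,2,3,4)$ and $\T=(3,5,2,1,4)$ the vertex $5$ lies on the upper arc $1\to 5\to 2$ and on the lower arc $2\to 5\to 3$, so the lifted walk repeats a vertex; a violating cycle still exists, namely $1\upto 5\downto 3\upto 4\downto 1$, but it is not the lift.) So the paper is tacitly invoking the restriction principle --- if $\S,\T$ are compatible then so are $\S|_I,\T|_I$ --- which it asserts without proof at the start of the proof of Theorem~\hyperref[thm:B]{B}. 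You instead prove this principle: restriction of ordered set partitions corresponds, on the Newton-polytope side of Proposition~\ref{prop:newton}, to a coordinate projection, coordinate projections commute with Minkowski sums, and inequality systems of root/coordinate type are closed under Fourier--Motzkin elimination, so almost-alcovedness (hence compatibility) survives projection. What your route buys is a self-contained argument that supplies precisely the lemma left implicit both here and in the paper's proof of Theorem~\hyperref[thm:B]{B}; what the paper's route buys is brevity, at the cost of resting on that unproven (though true) reduction step.
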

\begin{proof}
	The proof proceeds by distinguishing the three types of interlacing above by finding a violating cycle in $G_{\S,\T}$ in each case.
	First assume that $\S$ and $\T$ are $4$-interlaced.
	W.l.o.g.\ after relabeling we can assume that the elements $a,b,c,d$ are the numbers $1,2,3,4$ in this order.
	Thus, $\S|_{1,2,3,4}=(1,2,3,4)$ and $\T|_{1,2,3,4}=(3,2,1,4)$.
	In this case, we find the violating cycle $1 \upto 2 \downto 3 \upto 4 \downto 1$ of length $4$ in $G_{\S, \T}$.

 Secondly, assume that $\S$ and $\T$ are $6$-interlaced of the first kind.
 We can thus again assume that  we have
 $\S|_{1,2,3,4,5,6}=(1,2,3,4,5,6)$ and $\T|_{1,2,3,4,5,6}=(3,4,1,2,5,6)$ after relabeling the elements.
 In this case, we find this violating cycle of length $6$ in $G_{\S, \T}$:
 \[1 \downto 4 \upto 5 \downto 2 \upto 3 \downto 6 \upto 1.\]

 Lastly, assume that $\S$ and $\T$ are $6$-interlaced of the second kind.
 We can thus again assume that we have
 $\S|_{1,2,3,4,5,6}=(1,2,3,4,5,6)$ and $\T|_{1,2,3,4,5,6}=(1,4,5,2,3,6)$ after relabeling the elements.
 In this case, we again find a violating cycle of length $6$ in $G_{\S, \T}$:
 \[1 \upto 2 \downto 5 \upto 6 \downto 3 \upto 4 \downto 1 .\qedhere\]
\end{proof}

The converse of this statement is the missing piece in the proof of ~\Cref{thm:interlaced}.
As a warm-up we start with the result for small $n$.
This will serve as the start of an inductive argument.

\begin{proposition}\label{prop:le_6}
	Let $\S$ and $\T$ be two nondegenerate ordered set partitions on $\left[n\right]$ with $n\le 6$.
	Assume that $\S$ and $\T$ are not compatible.
	Then they are $4$- or $6$-interlaced.
\end{proposition}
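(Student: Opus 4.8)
The plan is to prove the contrapositive-free direction directly: assuming $\S$ and $\T$ are incompatible, I will exhibit a $4$- or $6$-interlacing (the reverse implication is already \Cref{prop:interlacing}). The entry point is the graphical criterion of \Cref{thm:violating_cycles}: incompatibility of $\S,\T$ is equivalent to the existence of a violating cycle $C$ in $G_{\S,\T}$, that is, a simple cycle alternating between $k\ge 2$ maximal upper path segments and $k$ maximal lower path segments. Such a cycle has exactly $2k$ \emph{switch vertices} (the endpoints of the segments, where $C$ passes between upper and lower edges), and since $C$ visits each vertex at most once it meets at least $2k$ and at most $n\le 6$ vertices. Hence $k\in\{2,3\}$, so the argument splits into these two cases; for $n<6$ only $k=2$ can occur.

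In the case $k=2$ I would show that the four switch vertices already exhibit a $4$-interlacing, regardless of the four segment lengths. Writing the switch vertices in the order in which $C$ meets them as $u_1,w_1,u_2,w_2$ (upper segment $u_i\rightsquigarrow w_i$, lower segment $w_i\rightsquigarrow u_{i+1}$ cyclically), each upper segment is an arc of the cyclic order $\S$, so $u_i$ precedes $w_i$ in $\S$; as two vertex-disjoint arcs of a cycle do not interleave, one gets $\S|_{\{u_1,w_1,u_2,w_2\}}=(u_1,w_1,u_2,w_2)$. Dually each lower segment is a reversed arc of $\T$, so $u_{i+1}$ precedes $w_i$ in $\T$, and the same non-interleaving remark gives $\T|_{\{u_1,w_1,u_2,w_2\}}=(u_2,w_1,u_1,w_2)$. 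Setting $(a,b,c,d)=(u_1,w_1,u_2,w_2)$ this reads $\S|=(a,b,c,d)$ and $\T|=(c,b,a,d)$, exactly the definition of $4$-interlaced.

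In the case $k=3$ the counting forces $2k=6=n$, so all six vertices are switch vertices and every segment is a single edge; thus $C$ is a $6$-cycle whose three upper edges are three vertex-disjoint directed edges of the directed $6$-cycle $G_\S$. After relabeling so that $\S=(1,2,3,4,5,6)$ and using the rotation $i\mapsto i+1$ (which fixes $\S$ and interchanges the only two such edge-triples), I may assume the upper edges are $1\to 2,\,3\to 4,\,5\to 6$. The three lower edges then join the pairs $(1,2),(3,4),(5,6)$ into a single cycle, realizing one of the two $3$-cycles on these pairs; this fixes three adjacencies of $\T$, and since $\T$ is itself a $6$-cycle each choice completes in exactly two ways. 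I would then inspect the four resulting candidates: three of them are $4$-interlaced (e.g.\ $\T=(1,6,5,4,3,2)$ on $\{1,2,3,4\}$, $\T=(1,6,3,2,5,4)$ on $\{1,2,3,6\}$, and $\T=(1,4,3,6,5,2)$ on $\{1,2,5,6\}$), while the remaining one is $\T=(1,4,5,2,3,6)$, which is $6$-interlaced; the matching-B normalization, being the image under $i\mapsto i+3$, accounts for the first listed interlacing pattern, so both kinds are covered.

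I expect the main obstacle to be the finite bookkeeping of the $k=3$ case. The delicate point is that a violating cycle pins down only three of the six adjacencies of $\T$, so each combinatorial type of cycle corresponds to a small set of completions, and each completion must be matched against the interlacing patterns up to the admissible symmetries — relabelings fixing $\S$, cyclic rotation of the blocks, and the $\S\leftrightarrow\T$ symmetry coming from commutativity of the Minkowski sum. By contrast the $k=2$ analysis is uniform and is the conceptual heart of the statement; once it is in place, the base case reduces to verifying that a short explicit list of six-element partitions is interlaced.
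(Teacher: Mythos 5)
Your proposal is correct, and it rests on the same foundation as the paper's own proof --- the violating-cycle criterion of \Cref{thm:violating_cycles} followed by a finite analysis --- but it organizes that analysis in a genuinely different and somewhat cleaner way. Where the paper enumerates violating cycles for $n=4$, reduces $n=5$ to that case, and then lists four cycle types with five completions for $n=6$, you split according to the number $k$ of maximal upper segments, with the $2k$ distinct switch vertices forcing $k\in\{2,3\}$. Your $k=2$ step is the real gain: the observation that two vertex-disjoint arcs of a cycle cannot interleave shows directly that the four switch vertices themselves form a $4$-interlacing, uniformly in $n$; this replaces the paper's enumeration for $n=4,5$, subsumes the two-segment part of $n=6$, and in fact proves the stronger, reusable statement that \emph{any} violating cycle with exactly two maximal upper runs certifies a $4$-interlacing for arbitrary $n$ (the paper achieves the same effect implicitly, by restricting to switch vertices and invoking the $n=4$ case). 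For $k=3$, which forces $n=6$ and an alternating $6$-cycle of single edges, your use of the rotation $i\mapsto i+1$ fixing $\S$ halves the bookkeeping: you obtain four candidate completions instead of the paper's five, and your classification of them checks out --- $(1,6,5,4,3,2)$, $(1,6,3,2,5,4)$, $(1,4,3,6,5,2)$ are $4$-interlaced on the subsets you name, $(1,4,5,2,3,6)$ is $6$-interlaced of the second kind, and the paper's fifth candidate $(3,4,1,2,5,6)$, the $6$-interlacing of the first kind, is exactly the rotated image of $(1,4,5,2,3,6)$, so nothing is lost in your normalization. Together with \Cref{prop:interlacing} for the converse (which the statement does not even require), the argument is complete; the paper's version buys an explicit list of all incompatible $\T$ for $n=6$ in one normalization, while yours buys uniformity in the $k=2$ case and a symmetry reduction in the $k=3$ case.
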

\begin{proof}
	Without loss of generality, we can assume that $\S=(1,\ldots,n)$ is the standard cyclic order. 
	It is clear that $G_{\S,\T}$ cannot contain a violating cycle for $n<4$.
	
	We thus first consider the case $n=4$.
	In this case, the only two options for violating cycles $G_{\S,\T}$ are
	\[
	1\upto 2 \downto 3 \upto 4 \downto 1 \quad \mbox{or}\quad 1\downto 2\upto 3\downto 4 \upto 1.
	\]
	In both cases the only option for $\T$ is the ordered set partition $(3,2,1,4)$ (after shifting cyclically).
	Thus $\S$ and $\T$ are $4$-interlaced.
	
	The case $n=5$ is the same as the previous case as a violating cycle on $\left[5\right]$ either contains only $4$ vertices or one path segment of length $2$ which can also be reduced to the previous case.
	
	So let us now consider the case $n=6$.
	As above, we only need to consider violating cycles in $G_{\S,\T}$ that consist of three upper path segments as $\S$ and $\T$ are $4$-interlaced otherwise.
	There are only four options for such cycles in $G_{\S,\T}$:
	\begin{eqnarray*}
		1 \upto 2\downto 3 \upto 4 \downto 5 \upto 6 \downto 1 \quad \mbox{or}\quad 1 \downto 2\upto 3 \downto 4 \upto 5 \downto 6 \upto 1\quad \mbox{or}\\
		1 \upto 2\downto 5 \upto 6 \downto 3 \upto 4 \downto 1 \quad \mbox{or}\quad 1 \downto 4\upto 5 \downto 2 \upto 3 \downto 6 \upto 1.\phantom{\quad \mbox{o}}
	\end{eqnarray*}
	The lower edges in these cycles each give three consecutive conditions on the ordered set partition $\T$.
	For instance the first cycle implies that $\T$ has the three parts $(3,2)$, $(5,4)$, and $(1,6)$ in some order.
	The only five ordered set partitions (up to cyclic shifts) that satisfy all conditions of at least one of the four cycles above are:
	\[
	(\mathbf{3},\mathbf{2},5,4,\mathbf{1},\mathbf{6})\quad (5,4,\mathbf{3},\mathbf{2},\mathbf{1},\mathbf{6})\quad (\mathbf{5},2,\mathbf{1},4,\mathbf{3},\mathbf{6})\quad (1,4,5,2,3,6)\quad (3,4,1,2,5,6).
	\]
	The last two options are $6$-interlaced by definition.
	The first three options are $4$-interlaced witnessed by the numbers displayed in bold face.
	Thus, the two ordered set partitions $\S$ and $\T$ are $4$- or $6$- interlaced in each case.
\end{proof}

\begin{proof}[Proof of~\Cref{thm:interlaced}]
	To prove the theorem we show that for any pair $\S$ and $\T$ of incompatible nondegenerate set partitions of $[n]$ with $n>6$, there exists a proper subset $I\subset [n]$ such that $\S_I$ and $\T_I$ are not compatible. Therefore, by applying this reduction iteratively, we will obtain a proper subset $I$ of size $\leq 6$ such that $\S_I$ and $\T_I$ are not compatible.
	The theorem then follows from the characterization of compatible simplices for $n\leq 6$ in~\Cref{prop:le_6}.

	To prove the existence of $I$, let us assume the following reduction.
	By Theorem~\ref{thm:violating_cycles} compatibility of $\S$ and $\T$ is equivalent to the existence of  a violating cycle $C$ in $G_{\S, \T}$. Therefore the restriction of the set partitions $\S$ and $\T$ to the vertices involved in the cycle of $C$ is still incompatible.
	Hence it is enough for us to study the case when the violating cycle $C$ passes through all vertices of  $G_{\S, \T}$.
	Moreover, we can assume that all upper and all lower path segments of $C$ are just single edges as we could otherwise restrict to the start and end vertices of these segments and obtain a violating cycle on fewer vertices; note that since the cycle is violating the vertices in the middle of such segments are met by the cycle exactly once.
	So in total the cycle $C$ visits every vertex exactly once and the edges in $C$ alternate between upper and lower vertices.
	Further without loss of generality we can assume that $\S=(1,\ldots,n)$ is the standard cyclic order on $[n]$ and $\T = (j_1,\ldots,j_n)$.

	For the cyclic order $\T=(j_1,\ldots, j_n)$ we define its $i$-th step $s_i$ to be 
	\[
	s_i =  j_{i+1}-j_{i} \mod n,
	\]
	for $i<n$ and the $n$-th step to be $j_1-j_n\mod n$.
	Let us assume that there is $i$ such that the step $s_i$  in $\T$ is not 1 or 3. We will construct a violating cycle $C'$ strictly shorter than~$C$, i.e., not passing through all vertices of $G_{\S, \T}$.
	The existence of $C'$ proves the theorem under the assumption that not all the steps in $\T$ are equal to 1 or 3.
	In the case that this assumption does not hold, the theorem follows from~\Cref{cor:13stepsinterlaced} below.
	
	Let $s_i\ne 1,3$ be the $i$-th step in $\T$. 
	We can assume that the cycle $C$ contains the edges $j_{i+1}-3 \,\upto\, j_{i+1}-2$ and $j_{i+1}-1 \,\upto\, j_{i+1}$ of $G_\S$. If not, we can consider the complementary cycle to $C$.
	Thus, the cycle $C$ consists of the concatenation of the four segments $j_{i+1}-3 \,\upto\, j_{i+1}-2$, $P_1$, $j_{i+1}-1 \,\upto\, j_{i+1}$, and $P_2$ where $P_1$ is an alternating path from $j_{i+1}-2$ to $j_{i+1}-1$ and $P_2$ is an alternating path from $ j_{i+1}$ to $j_{i+1}-3$.
	Note that both paths start and end with a lower edge.
	
	Let $C'$ be the cycle comprised of the two segments $j_{i+1}-3 \,\upto\, j_{i+1}-2 \upto j_{i+1}-1 \,\upto\, j_{i+1}$ and~$P_2$.
	It is easy to see that $C'$ is still violating.
	By removing the vertices $ j_{i+1}-2$ and  $ j_{i+1}-1$ we thus get a shorter violating cycle of the two segments $j_{i+1}-3 \,\upto\, j_{i+1}$ and $P_2$ in the graph corresponding to the restricted set partitions $\S|_{\left[n\right]\setminus\{j_{i+1}-2 \, ,\, j_{i+1}-1\}}$ and $\T|_{\left[n\right]\setminus\{j_{i+1}-2 \, ,\, j_{i+1}-1\}}$.
\end{proof}

\begin{proposition}\label{prop:13stepsinterlaced}
	Assume that $\T=(j_1,\ldots, j_n)$ is a nondegenerate ordered set partition with all steps $s_i$ equal to either $1$ or $3$. Then there are only four possible cases:
	\begin{itemize}
		\item[(1)] $s_i=1$ for all $1\le i\le n$;
		\item[(2)] $n$ is not divisible by 3 and $s_i=3$ for all $1\le i \le n$;
		\item[(3)] $n=4k+2$ and $s_{2i-1}=1$, $s_{2i}=3$ for all $1\leq i\leq 2k+1$;
		\item[(4)] $n=4k+2$ and $s_{2i-1}=3$, $s_{2i}=1$ for all $1\leq i\leq 2k+1$.
	\end{itemize}
\end{proposition}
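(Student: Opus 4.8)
The plan is to reformulate the hypothesis on steps as a condition on a single permutation and then extract all structure from injectivity. Since $\T=(j_1,\ldots,j_n)$ is nondegenerate it lists every element of $\Z/n\Z$ exactly once, so it is encoded by the successor permutation $\sigma\colon \Z/n\Z\to\Z/n\Z$ defined by $\sigma(j_i)=j_{i+1}$ with indices read cyclically. This $\sigma$ is a single $n$-cycle, and the hypothesis that every step lies in $\{1,3\}$ is exactly the statement that $\sigma(x)-x\in\{1,3\}$ for all $x$. Conversely, any single $n$-cycle with this property yields an admissible $\T$, and the $i$-th step is $s_i=\sigma(j_i)-j_i$. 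I would set $B=\{x:\sigma(x)=x+3\}$ (the ``long'' vertices) and $A=\{x:\sigma(x)=x+1\}$.

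The key step is a rigidity lemma coming from injectivity of $\sigma$: the set $B$ is closed under adding $2$. Indeed, if $x\in B$ but $x+2\in A$, then $\sigma(x)=x+3=(x+2)+1=\sigma(x+2)$, contradicting injectivity; hence $x\in B$ forces $x+2\in B$. Therefore $B$ is a union of cosets of the subgroup $\langle 2\rangle\subseteq \Z/n\Z$, and I would split on the parity of $n$. If $n$ is odd then $\langle 2\rangle=\Z/n\Z$, so $B\in\{\emptyset,\Z/n\Z\}$, giving the all-$1$ and all-$3$ step sequences. If $n$ is even then $\langle 2\rangle$ is the index-$2$ subgroup of even residues, so $B$ is one of $\emptyset$, the evens $E$, the odds $O$, or all of $\Z/n\Z$.

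It then remains to determine, for each admissible $B$, when $\sigma$ is a single $n$-cycle and to translate the answer into the four step patterns. When $B=\emptyset$ we get $\sigma(x)=x+1$, always an $n$-cycle, which is case (1). When $B=\Z/n\Z$ we get $\sigma(x)=x+3$, a single $n$-cycle if and only if $\gcd(3,n)=1$, i.e. $3\nmid n$, which is case (2). When $B=E$ (or $B=O$) the map $\sigma$ flips parity at every step, so the steps alternate between $3$ and $1$ along the orbit; tracing shows that two consecutive steps advance an even vertex by $4$, so the orbit closes into a single $n$-cycle if and only if stepping by $4$ cyclically exhausts the even residues, which happens exactly when $n/2$ is odd, i.e. $n=4k+2$. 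Reading $s_i=\sigma(j_i)-j_i$ off the alternating-parity orbit then reproduces cases (3) and (4) according to the parity of the starting vertex.

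The hard part will be the final connectivity computation: a union-of-cosets $B$ guarantees that $\sigma$ is a bijection but not that it is one long cycle, and it is precisely this distinction — separating a single $n$-cycle from a product of shorter cycles — that forces $n\equiv 2\pmod 4$ in the mixed cases and $3\nmid n$ in the all-$3$ case. Some care is also needed in the bookkeeping that converts the vertex-based description (which residues are long) into the index-based step description $s_{2i-1},s_{2i}$ appearing in the statement, but this is routine once the orbit structure is understood.
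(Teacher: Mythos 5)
Your proof is correct, and while it runs on the same engine as the paper's --- injectivity of the successor permutation applied to vertices at distance two --- it is organized genuinely differently and ends up proving more. The paper argues by runs: it assumes two consecutive equal steps, passes to a maximal cyclic interval of such steps, and derives a bijectivity contradiction at the boundary of that interval, once for 1-steps and once for 3-steps; the conclusion is that a mixed step sequence must alternate. Both of its contradictions are at bottom instances of your single rigidity lemma (for example, the paper's clash $\pi(l_1-1)=l_1+2=\pi(l_1+1)$ is exactly your statement that a 3-step vertex and a 1-step vertex cannot sit at distance two), but you isolate that observation once, as ``$B$ is closed under adding $2$,'' conclude that $B$ is a union of cosets of $\langle 2\rangle\subseteq\Z/n\Z$, and let the group structure of $\Z/n\Z$ do the case split. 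The concrete payoff of your route is the endgame: the paper's written proof stops once alternation is established --- beyond an opening realizability remark it does not derive the constraint $3\nmid n$ in case (2), and it nowhere derives $n\equiv 2\pmod 4$ in cases (3)--(4). Those constraints follow only from the requirement that $\sigma$ be a single $n$-cycle, which is precisely the connectivity computation you flag and carry out ($\sigma^2$ acts on the even residues by $+4$, and this action is transitive iff $n/2$ is odd). A smaller advantage is that your vertex-indexed bookkeeping ($A$, $B$ as subsets of residues) sidesteps an ambiguity in the paper's proof, which silently switches from the index-based steps $s_i=j_{i+1}-j_i$ to vertex-based steps $\pi(i)-i$; your closing remark that the index-based patterns in (3) and (4) are recovered from the parity of the starting vertex is exactly the bookkeeping the paper omits.
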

\begin{proof}
	First notice, that for any $n$, the case (1) is realizable by the standard order and similarly case (2) is realizable if and only if $n$ is not divisible by $3$.
	
Now assume that there exist  some $i,j\in [n]$ with $s_i = 1$ and $s_j=3$. We will show that in this case, one has $s_i\cdot s_{i+1}=3$ for any $i\in [n]$. Equivalently, there are no two consecutive steps of length 1 or 3.
Without loss of generality we can assume that $j_1=1$. Let us denote by $\pi$ the cyclic permutation $(j_1,\ldots,j_n)$. In terms of the permutation $\pi$, the $i$-th step $s_i$ is given by $\pi(i)-i \mod n$. 

Assume that $s_i=s_{i+1}=1$ for some $i\in [n]$ or equivalently that $\pi(i)=i+1$ and $\pi(i+1)=i+2$.
Let $I=[l_1,l_2]$ be the largest cyclic interval containing $i,i+1$ such that $s_{k}=1$ for any $k\in I$. Since we have at least one $j$ with $s_j=3$, we have $I\ne[n]$ and in particular $(l_1-1)\notin I$. On the other hand, since $i,i+1\in I$, the length of $I$ is at least two. Therefore, 
\[
    \pi(l_1-1)=(l_1-1)+s_{l_1-1}= (l_1-1)+3 = l_1+2\leq l_2+1,
\]
which contradicts the fact that $\pi = (j_1,\ldots, j_n)$ is a cycle since $\pi(k)=k+1$ for all $k\in [l_1,l_2]$.
	
Finally, assume that $s_i=s_{i+1}=3$ for some $i\in [n]$. Then we get that $s_{i+2}=3$ as well since $i+3=\pi(i)\ne \pi(i+2)$.
Let $I=[l_1,l_2]$ be the maximal cyclic interval interval containing $i,i+1,i+2$ such that $s_{\pi(k)}=3$ for any $k\in I$. Then the step at $\pi^{-1}(l_1+2)$ has to be grater than~$3$ which contradicts the assumptions. Indeed, since $s_{l_1+1}=s_{l_1}=3$, $s_{l_1-1}=1$, and the largest possible value for $\pi^{-1}(l_1+2)$ is $l_1-2$ we obtain that $s_{\pi^{-1}(l_1+2)} \geq (l_1+2)-(l_1-2)=4$.
\end{proof}

\begin{corollary}\label{cor:13stepsinterlaced}
	Let $\S$ be the standard cyclic order on $\left[n\right]$ with $n\ge 7$ and $\T$ be another distinct cyclic order on $\left[n\right]$ with all steps $s_i$ equal to either $1$ or $3$. Then $\S$ and $\T$ are $4$- or $6$-interlaced.
\end{corollary}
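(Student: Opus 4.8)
The plan is to reduce the statement to the classification obtained in \Cref{prop:13stepsinterlaced}. Since $\T$ is distinct from $\S$, it is not of type~(1), so it falls into one of the cases (2), (3), (4); in each I will exhibit an explicit subset $I\subseteq[n]$ of size four or six on which $\S|_I$ and $\T|_I$ realize one of the interlacing patterns. Throughout we may assume $j_1=1$, since interlacing depends only on $\S$ and $\T$ as cyclic orders and a cyclic shift of $\T$ leaves the multiset of steps unchanged; with $j_1=1$ fixed, the step data determines $\T$ completely.

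The alternating cases (3) and (4) are immediate. Here $n=4k+2$ with $k\ge 2$, and listing $\T$ from $j_1=1$ shows that it enumerates $[n]$ in blocks of size two: in case (3) the blocks are $(1,2),(5,6),(9,10),\dots$ up to the wrap-around, followed by $(3,4),(7,8),\dots$, and in case (4) they are $1,(4,5),(8,9),\dots$ followed by $(2,3),(6,7),\dots$. Restricting to $I=\{1,2,3,4,5,6\}$ then gives $\T|_I=(1,2,5,6,3,4)$ in case (3) and $\T|_I=(1,4,5,2,3,6)$ in case (4); the condition $k\ge 2$ guarantees that the relevant blocks lie on the correct sides of the wrap-around, so these restrictions are genuine. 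Matching against the definitions, the first exhibits a $6$-interlacing of the first kind and the second a $6$-interlacing of the second kind.

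The real work is in case (2), where every step equals $3$ and $3\nmid n$. The key observation is that a single decreasing triple already forces a $4$-interlacing: because $j_1=1$, the element $1$ heads the list $\T$, so for any values $n\ge w_4>w_3>w_2\ge 2$ appearing in $\T$ in this decreasing order one has $\T|_{\{1,w_2,w_3,w_4\}}=(1,w_4,w_3,w_2)$, which is the $4$-interlacing pattern $(c,b,a,d)$ read cyclically, so $\{1,w_2,w_3,w_4\}$ is $4$-interlaced. It therefore suffices to find a length-three decreasing subsequence of $\T$. Since the common step is $3$, the descents of $\T$ occur exactly after the three largest values $n-2,n-1,n$, so $\T$ splits into precisely three increasing runs, namely the three residue classes modulo $3$ traversed in arithmetic progressions of difference $3$. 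Picking one element from each run in decreasing order produces the desired triple: for $n\equiv 1\pmod 3$ the values $n>n-1>n-2$ lie in three different runs, yielding the interlacing set $\{1,n-2,n-1,n\}$, while for $n\equiv 2\pmod 3$ (so $n\ge 8$) the values $n-1>n-3>n-5$ do, yielding $\{1,n-5,n-3,n-1\}$.

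I expect case (2) to be the only real obstacle. Unlike the alternating cases there is no single subset that works for all $n$, so one must understand the wrap-around of the step-$3$ sequence well enough to locate a decreasing triple; organizing this through the three increasing runs and splitting on the residue of $n$ modulo $3$ is what makes it work. The remaining bookkeeping, namely checking that the chosen values genuinely lie in three distinct runs and are at least $2$, and that the blocks in cases (3) and (4) fall on the correct side of the wrap, is routine once these run decompositions are set up.
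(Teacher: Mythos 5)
Your proof is correct and takes essentially the same route as the paper: both reduce to the classification in \Cref{prop:13stepsinterlaced} and then exhibit explicit $4$- or $6$-interlacing subsets in each case, with the two alternating cases handled by restricting to $\{1,\dots,6\}$ exactly as the paper does. The only (immaterial) difference is in the all-steps-equal-to-$3$ case, where you pick your witnesses from the top of the three increasing runs ($\{1,n-2,n-1,n\}$ resp.\ $\{1,n-5,n-3,n-1\}$) while the paper picks them from the bottom ($\{1,2,3,4\}$ resp.\ $\{1,3,5,7\}$); both arguments rest on the same observation that three values appearing in decreasing order after the leading $1$ yield a $4$-interlacing.
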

\begin{proof}
	By~\Cref{prop:13stepsinterlaced} we need to consider three cases for the cyclic order $\T$ as the case with all steps equal to one yields the standard cyclic order $\S$.
	\begin{description}
		\item[Case 1] All steps of $\T$ are equal to $3$ and $n$ is not divisible by $3$.
		First assume that $n\equiv 1\mod 3$.
		Then the cyclic order $\T$ has the shape
		\[
		\mathbf{1},\mathbf{4},7,\dots \mathbf{3},6,\dots,\mathbf{2},5,\dots.
		\]
		So in this case, the numbers $1,2,3,4$ form a $4$-interlacing subsequence.
		Now assume that $n\equiv 2\mod 3$.
		Then the cyclic order $\T$ has the shape
		\[
		\mathbf{1},4,\mathbf{7},\dots 2,\mathbf{5},\dots,\mathbf{3},6,\dots.
		\]
		So in this case, the numbers $1,3,5,7$ form a $4$-interlacing subsequence.
		\item[Case 2]  $n=4k+2$ and $s_{2i-1}=1$, $s_{2i}=3$ for all $1\leq i\leq 2k+1$.
		In this case, the cyclic order $\T$ has the shape
		\[
		\mathbf{1},\mathbf{2},\mathbf{5},\mathbf{6},9,10,\dots \mathbf{3},\mathbf{4},7,8\dots.
		\]
		Hence, the numbers $1,2,3,4,5,6$ form a $6$-interlacing subsequence (of the first kind).
		\item[Case 3] $n=4k+2$ and $s_{2i-1}=3$, $s_{2i}=1$ for all $1\leq i\leq 2k+1$.
		In this last case, the cyclic order $\T$ has the shape
		\[
		\mathbf{1},\mathbf{4},\mathbf{5},8,9,\dots,\mathbf{2},\mathbf{3},\mathbf{6},7,10,\dots.
		\]
		Hence, the numbers $1,2,3,4,5,6$ form a $6$-interlacing subsequence (of the second kind).\qedhere
	\end{description}
\end{proof}

\subsection{Compatibility of lower-dimensional simplices}\label{subsec:proof_of_thmB}

Next we extend our compatibility results to include the case of degenerate ordered set partitions $\S$ and $\T$.
So in this case, their Minkowski sum $\Delta_\S+\Delta_\T$ can be not full-dimensional, or to put it differently, the common refinement of their normal fans $\Sigma_\S$ and $\Sigma_\T$ can have a nontrivial lineality space.
Now we ready to complete the proof of Theorem~\hyperref[thm:B]{B}.

\begin{proof}[Proof of Theorem~{\hyperref[thm:B]{B}}]
	If $\S$ and $\T$ are compatible then every restricted partition is compatible as well.
	For the converse, suppose $\S$ and $\T$ are not compatible.
	Our aim is now to find a subset  $I\subseteq \left[n\right]$ of size at most $6$ such that the restricted partitions $\S|_I$ and $\T|_I$ are not compatible either.
	
	By~\Cref{thm:violating_cycles} there is a violating cycle $C$ in $G_{\S,\T}$.
	Let $\widetilde{\S}$ and $\widetilde{\T}$ be two nondegenerate set partitions that refine the order of $\S$ and $\T$, respectively, and follow the directions of the undirected edges appearing in $C$.
	In other words, the graph $G_{\widetilde{\S},\widetilde{\T}}$ agrees with $G_{\S,\T}$ except that all undirected edges in $G_{\S,\T}$ are now oriented in $G_{\widetilde{\S},\widetilde{\T}}$ in such a way that all undirected edges of $G_{\S,\T}$ that appear in $C$ are oriented as prescribed by $C$.
	
	Hence, $C$ is a violating cycle in $G_{\widetilde{\S},\widetilde{\T}}$ and the two nondegenerate OSPs $\widetilde{\S}$ and $\widetilde{\T}$  are also not compatible.
	\Cref{thm:interlaced} now implies that there exists a subset $I\subset [n]$ with $|I|\leq 6$ such that $\widetilde{\S}|_I$ and $\widetilde{\T}|_I$ are not compatible either.
	Let $C_I$ be a violating cycle witnessing the latter incompatibility.
	This cycle is also a violating cycle in the restricted undirected graph $G_{\S_I,\T_I}$.
	Therefore,  $\S|_I$ and $\T|_I$ are not compatible either.
\end{proof}

We close this section by emphasizing that such a reduction property does not hold for general root subspaces.
\begin{remark}\label{rmk:non_reduction}
	For $n\ge 2$, let $L_1$ and $L_2$ be the two root spaces generated by 
	\[
	\{e_{12}, e_{34}, \dots,e_{2n-1\,2n}\}\text{ and }  
	\{e_{23},\dots,e_{2n-2\,2n-1}, e_{2n\,1}\} \text{ respectively}. 
	\]
	Then the intersection $L_1\cap L_2$ is the one-dimensional subspace generated by
	\[
	L_1\cap L_2 = \langle e_{12}+ e_{34}+ \dots+e_{2n-1\,2n}\rangle= \langle e_{23}+\dots+e_{2n-2\,2n-1}+e_{2n\,1}\rangle
	\]
	which is not a root space.
	However, the intersection $(L_1\cap I)\cap (L_2\cap I)$ for any proper subspace $I$ of $\cH_n$ is trivial, and thus a root subspace.
	Hence, the compatibility of $L_1$ and $L_2$ cannot be tested by checking it on lower-dimensional subspaces only.
\end{remark}

\section{Prominent alcoved polytopes}\label{sec:examples}

In this section we present three series of  polytopes which are shown to be alcoved using Theorem~\hyperref[thm:B]{B}: the associahedron $A_{n}$, the cyclohedron $C_n$ and the $\hat{D}_n$-polytope.
The fact that associahedra and cyclohedra are alcoved is well-known, but our techniques give a new proof.
 The conclusion that the $\hat{D}_n$-polytope is alcoved is new to our knowledge.

Following Postnikov~\cite{Postnikov}, the cyclohedron $C_n$ is typically presented as the Minkowski sum of certain faces of the coordinate simplex.
For $I\subseteq \left[n\right]$ the simplex $\Delta_I$ is the convex hull of the bases vectors $\{e_i\}_{i\in I}$.
\begin{definition}
	For $n\ge 2$, the \emph{cyclohedron} $C_n$ is the Minkowski sum
	$
	C_n = \sum_{I\subseteq \left[n\right]} \Delta_I,
	$
	where the sum runs over all cyclic intervals $I$ in $\left[n\right]$.
	Equivalently, it is the Newton polytope
	\begin{eqnarray}\label{eq: cyclo gen perm}
		C_n = \text{Newt}\left((y_1+\cdots +y_n)\prod_{i\not=j}f_{i,j}(y) \right),
	\end{eqnarray}
	where $f_{i,j}(x) = (y_{i+1}+\cdots +y_j)$ and the indices are cyclic modulo $n$.
	
	The \emph{associahedron} $A_n$ is the Minkowski sum
	$
	A_n = \sum_{I\subseteq \left[n\right]} \Delta_I,
	$
	where the sum runs over all linear intervals $I$ in $\left[n\right]$.
\end{definition}

We start by relating these Minkowski sums to the alcoved simplices discussed in~\Cref{sec:alcoved_simplices}.
The key point is that after a change of coordinates, these coordinate simplices are precisely the alcoved simplices corresponding to coarsenings of the standard ordered set partition $(1,\dots,n)$.

\begin{proposition}\label{prop:alcoved_transformation}
	Let $\varphi:\R^{n}\to \R^{n-1}$ be the linear surjection defined by 
	\[
		e_i \mapsto \sum_{j=1}^{i-1} e_j\quad \mbox{for }1\le i \le n.
	\]
	Let $I=\{i_1,\dots,i_k\}\subseteq \left[n\right]$ with $i_1<\dots <i_k$.
	The image of the coordinate simplex $\varphi(\Delta_I)$ is the shifted simplex $N_{\S_I}+\sum_{j=1}^{i_1-1}e_j$ where $\S_I$ is the following coarsening of $(1,\dots,n)$
	\[
		\S_I \coloneqq (i_1 \, i_1+1\dots i_2-1, i_2 \dots i_3-1, \dots, i_{k}\dots i_1-1).
	\]
	In particular the simplices $\varphi(\Delta_I)$ and $N_{\S_I}$ have the same normal fan in $\R^{n-1}$.
	
	This defines a bijection between the simplices $\Delta_I$ for $I\subseteq \left[n\right]$ and the OSPs that coarsen $(1,\dots,n)$.
\end{proposition}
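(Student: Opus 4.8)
The plan is to prove the set equality $\varphi(\Delta_I) = N_{\S_I} + \sum_{j=1}^{i_1-1} e_j$ directly at the level of vertices, exploiting that $\varphi$ is linear and therefore commutes with the convex hull operation; the statement about normal fans and the bijection then follow formally.

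First I would record both vertex sets explicitly. Since $\Delta_I = \conv\{e_{i_1},\dots,e_{i_k}\}$ and $\varphi$ is linear, $\varphi(\Delta_I) = \conv\{\varphi(e_{i_1}),\dots,\varphi(e_{i_k})\}$, where $\varphi(e_{i_m}) = \sum_{j=1}^{i_m-1} e_j$ by definition of $\varphi$. On the other side, writing $\S_I = (B_1,\dots,B_k)$ with $B_m = \{i_m,i_m+1,\dots,i_{m+1}-1\}$ read cyclically, the last block $B_k = \{i_k,\dots,n,1,\dots,i_1-1\}$ contains $n$, so the convention $n \in B_\ell$ holds and $N_{\S_I}$ genuinely lives in $\R^{n-1}$. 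By the definition of $N_\S$, the vertices of $N_{\S_I}$ are $0$ together with the partial sums $e_{B_1\cdots B_m} = \sum_{j=i_1}^{i_{m+1}-1} e_j$ for $1 \le m \le k-1$.

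Next I would add the translation vector $t := \sum_{j=1}^{i_1-1} e_j$ and observe a telescoping of index ranges: one has $t + 0 = \sum_{j=1}^{i_1-1} e_j = \varphi(e_{i_1})$, and for each $1 \le m \le k-1$,
\[
t + e_{B_1\cdots B_m} = \sum_{j=1}^{i_1-1} e_j + \sum_{j=i_1}^{i_{m+1}-1} e_j = \sum_{j=1}^{i_{m+1}-1} e_j = \varphi(e_{i_{m+1}}).
\]
Hence $N_{\S_I}+t$ and $\varphi(\Delta_I)$ are the convex hulls of the same $k$ points, which proves the first claim. Since translating a polytope does not change its normal fan, $\varphi(\Delta_I)$ and $N_{\S_I}$ share a normal fan in $\R^{n-1}$.

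Finally, for the bijection I would argue that the coarsenings of $(1,\dots,n)$ (taken, as throughout, up to cyclic shift with the representative satisfying $n \in B_\ell$) are exactly the partitions of $[n]$ into cyclic intervals, and that each such partition is uniquely recovered from its set of starting points, namely those $i$ whose cyclic predecessor $i-1$ lies in a different block. This set of starting points is precisely $I$, and conversely every nonempty $I$ produces the cyclic-interval partition $\S_I$, so $I \mapsto \S_I$ is a bijection. I do not expect a genuine obstacle, since the core is a telescoping computation together with the standard encoding of cyclic-interval partitions by their cut points; the only places demanding care are the cyclic wraparound in the last block $B_k$ (needed to confirm $n \in B_\ell$, hence the correct ambient space $\R^{n-1}$) and checking that the degenerate case $k=1$, where $\S_I = ([n])$ and $N_{\S_I}$ is a single point, remains consistent.
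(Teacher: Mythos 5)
Your proposal is correct and takes essentially the same route as the paper's own proof: both arguments write down the vertices of $N_{\S_I}$ from the Newton-polytope construction and the vertices $\varphi(e_{i_m})=\sum_{j=1}^{i_m-1}e_j$ of $\varphi(\Delta_I)$, and observe that the two vertex sets differ exactly by the translation $\sum_{j=1}^{i_1-1}e_j$. Your recovery of $I$ as the set of block starting points merely makes explicit the paper's one-line remark that the construction can be reversed (with the same harmless looseness in both texts that all singletons $I$ yield the one-block partition).
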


For instance for $n=4$, the full simplex $\Delta_{\left[4\right]}$ corresponds to the OSP $(1,2,3,4)$ and the simplex $\Delta_{1,3}$ to the OSP $(1\,2,3\, 4)$.

\begin{proof}
	By the construction in Equation \eqref{eqn:newton}, the simplex $N_{\S_I}$ has the vertices
	\[
	0, \quad e_{i_1} +\dots + e_{i_2-1}, \quad e_{i_1} +\dots + e_{i_3-1},\quad \dots \quad,  e_{i_1} +\dots + e_{i_k-1}.
	\]
	On the other hand, the simplex $\varphi(\Delta_I)$ has the vertices
	\[
	e_{1} +\dots + e_{i_1-1}, \quad e_{1} +\dots + e_{i_2-1}, \quad e_{1} +\dots + e_{i_3-1},\quad \dots \quad,  e_{1} +\dots + e_{i_k-1},
	\]
	which proves that $\varphi(\Delta_I)= N_{\S_I} +\sum_{j=1}^{i_1-1}e_j$.
	For the second claim, one notes that one can reverse the above construction to obtain a corresponding coordinate simplex to a coarsening of $(1,\dots,n)$.
\end{proof}

This discussion enables us to represent both the cyclohedron and the associahedron as Minkowski sum of alcoved simplices.
\begin{corollary}\label{cor:cyclohedron}
	The cyclohedron is normally equivalent to the Minkowski sum over all coarsenings of the OSP $(1,2,\ldots, n)$ such that at most one block has more than one element.
	
	The associahedron normally equivalent to the Minkowski sum over all coarsenings of the OSP $(1,2,\ldots, n)$ such that at most one block has more than one element and $n$ is in this largest block.
\end{corollary}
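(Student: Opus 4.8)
The plan is to prove Corollary~\ref{cor:cyclohedron} by combining the coordinate-change bijection of Proposition~\ref{prop:alcoved_transformation} with the definitions of $C_n$ and $A_n$ as Minkowski sums of coordinate simplices. The linear map $\varphi$ sends each coordinate simplex $\Delta_I$ to a translate of the alcoved simplex $N_{\S_I}$, and since Minkowski sums commute with linear maps, $\varphi(C_n)=\sum_I \varphi(\Delta_I)$ is a translate of $\sum_I N_{\S_I}$. Because translations do not affect the normal fan, it suffices to identify, for each relevant family of intervals $I$, the coarsening $\S_I$ of $(1,\dots,n)$ that $\varphi$ produces, and then check that these coarsenings are exactly the ones described in the statement.

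First I would treat the cyclohedron. Here $I$ ranges over all cyclic intervals of $[n]$, i.e.\ sets of the form $\{i,i+1,\dots,j\}$ read cyclically. For such an $I$, the formula $\S_I=(i_1\,i_1{+}1\cdots i_2{-}1,\,i_2\cdots i_3{-}1,\,\dots,\,i_k\cdots i_1{-}1)$ from Proposition~\ref{prop:alcoved_transformation} must be unwound: since the elements of a cyclic interval are consecutive, all the ``gaps'' between successive chosen indices have length one \emph{except} the single gap corresponding to the complementary cyclic interval $[n]\setminus I$, which collapses into one block. Thus $\S_I$ is the coarsening of $(1,\dots,n)$ having exactly one nontrivial block, namely the complement of $I$. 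As $I$ sweeps over all cyclic intervals, its complement also sweeps over all cyclic intervals, so the resulting $\S_I$ range over precisely the coarsenings of $(1,\dots,n)$ with at most one block of size greater than one. This matches the claim, and the $1$-dimensional summands (singleton-difference simplices and the vertex factor $y_1+\dots+y_n$) either contribute trivially to the normal fan or correspond to the degenerate coarsenings.

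For the associahedron the argument is parallel but $I$ ranges over \emph{linear} intervals $\{i,\dots,j\}$ with $1\le i\le j\le n$. The key structural difference is that a linear interval, viewed cyclically, is one whose complement $[n]\setminus I$ is again a cyclic interval \emph{containing the position that wraps through $n$ to $1$}; equivalently, the single nontrivial block of $\S_I$ is a cyclic interval containing $n$ (and the element following it cyclically). I would verify this by the same gap analysis: the unique long gap in $\S_I$ is the one spanning from $j{+}1$ cyclically around through $n$, $1$, up to $i{-}1$, so the large block always contains $n$. As $I$ ranges over all linear intervals, these blocks range over exactly the cyclic intervals containing $n$, giving the stated subcollection of coarsenings.

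The main obstacle, and the step deserving the most care, is the careful bookkeeping of the cyclic indexing and the identification of which single block is nontrivial; in particular one must be attentive to the fact that Proposition~\ref{prop:alcoved_transformation} assumes the normalization $n\in B_\ell$ and reads off $\S_I$ from the \emph{gaps} between chosen indices rather than the indices themselves, so that passing to complements is what converts ``interval $I$'' into ``single large block $=[n]\setminus I$.'' Once this dictionary is pinned down, the corollary follows immediately, since normal equivalence is preserved by $\varphi$ and by translation, and ``normally equivalent'' is exactly the statement that the two Minkowski sums share a normal fan.
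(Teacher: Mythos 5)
Your overall route---push the coordinate simplices through $\varphi$ via Proposition~\ref{prop:alcoved_transformation} and match the resulting coarsenings---is exactly how the paper intends this corollary to follow, but your dictionary between intervals $I$ and coarsenings has an off-by-one error. Reading the formula $\S_I = (i_1\, i_1{+}1\dots i_2{-}1,\; i_2\dots i_3{-}1,\;\dots,\; i_k\dots i_1{-}1)$ correctly, each block starts \emph{at} an element of $I$ and runs up to (but not including) the next element of $I$; hence for a cyclic interval $I=[i,j]$ the unique nontrivial block is not the complement $[n]\setminus I$ but the cyclic interval $[j,i-1]=\{j\}\cup\bigl([n]\setminus I\bigr)$, i.e.\ the complement together with the cyclically last element of $I$. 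For the cyclohedron this slip is harmless: both $I\mapsto [n]\setminus I$ and $I\mapsto [j,i-1]$ sweep out all cyclic intervals, so the family of coarsenings you land on is the right one (the multiplicities differ, which does not affect normal equivalence). A smaller slip in the same paragraph: the factor $y_1+\cdots+y_n$ is $\Delta_{[n]}$, which maps to the full nondegenerate simplex $N_{(1,\dots,n)}$; it is an essential $(n-1)$-dimensional summand, not a trivial one.

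For the associahedron the error is consequential. Take $n=4$ and the linear interval $I=[3,4]$: your dictionary gives the nontrivial block $[4]\setminus I=\{1,2\}$, i.e.\ the summand $\Delta_{(12,3,4)}$, whose big block does \emph{not} contain $n=4$ and which indeed does not appear in the paper's displayed decomposition of $A_4$; the correct block is $[4,2]=\{4,1,2\}$, giving $\Delta_{(3,412)}$, which does. Your supporting claim that ``the unique long gap spans from $j{+}1$ cyclically through $n$, $1$, up to $i{-}1$'' fails in exactly this edge case $j=n$, where the gap $[1,i-1]$ does not pass through $n$ at all, so as written the second half of your argument does not establish the stated characterization. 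The repair is the same one-element correction: the nontrivial block of $\S_{[i,j]}$ is $[j,i-1]$, which always contains $n$ because $j\le n$ and the block wraps from $j$ past $n$ around to $i-1<j$; conversely, every cyclic interval containing $n$ arises this way from a linear interval. With the dictionary fixed, both halves of your proof go through and coincide with the paper's (implicit) derivation from Proposition~\ref{prop:alcoved_transformation}.
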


Let us give some examples of Minkowski sum decompositions.  We have
\[
C_4=\Delta_{(1,2,3,4)}+\Delta_{(1,2,34)}+\Delta_{(1,23,4)}+\Delta_{(12,3,4)}+\Delta_{(2,3,41)}+\Delta_{(1,234)}+\Delta_{(123,4)}+\Delta_{(3,412)}+\Delta_{(2,341)}.
\]
Furthermore we have
\[
A_4 =\Delta_{(1,2,3,4)}+\Delta_{(1,2,34)}+\Delta_{(2,3,41)}+\Delta_{(1,234)}+\Delta_{(3,412)}+\Delta_{(2,341)}.
\]

Finally, we present the $\hat{D}_n$-polytope as follows.
\begin{definition}\label{def:dhat}
For a cyclic interval $I = [s,t]\subset [n-1]$, let  $\S_{I}$ be an ordered set partition of $[n-1]$ which is given as follows
\[
\S_{I} = ([s,t],t+1, t+2, \ldots,s-1).
\]
Now for any partition $\S_{s,t}$ of $[n-1]$ we define the partition $\hat{\S}_{s,t}$ of $[n]$, by joining the $n^\text{th}$ point to the interval $[s,t]$, resulting in
\begin{equation}\label{eq:dhat}
\hat{\S}_{s,t} = ([s,t]\cup n, t+1, \ldots, s-1).
\end{equation}
In total there are $(n-1)^2$ of such partitions and we define $\hat{D}_n$ as
$\displaystyle \sum_{r,t\in [n-1]}\Delta_{\hat{\mathbf{S}}_{r,t}}.
$
\end{definition}

The main result of this section is the following.
\begin{theorem}\label{thm: assocCycDHat}
    The associahedron $A_{n}$, the cyclohedron $C_n$ and the $\hat{D}_n$-polytope are alcoved.
\end{theorem}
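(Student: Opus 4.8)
The plan is to treat the three polytopes uniformly and reduce their alcovedness to the pairwise criteria of the previous sections. By Corollary~\ref{cor:cyclohedron} the associahedron $A_n$ and the cyclohedron $C_n$ are normally equivalent to Minkowski sums of the simplices $\Delta_\S$, where $\S$ ranges over the coarsenings of $(1,\dots,n)$ having at most one non-singleton block (with the extra condition that $n$ lies in that block for $A_n$); by Definition~\ref{def:dhat} the polytope $\hat D_n$ is the Minkowski sum of the simplices $\Delta_{\hat\S_{s,t}}$. In every case the summands are alcoved simplices, so by Theorem~\hyperref[thm:A]{A} it suffices to prove that any two summands are compatible, and by Theorem~\ref{thm:violating_cycles} this amounts to checking that $G_{\S,\T}$ carries no violating cycle. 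I would run this check family by family.

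For $A_n$ and $C_n$ the crucial feature is that every summand is a coarsening of the \emph{single} cyclic order $(1,\dots,n)$ whose unique non-singleton block is a cyclic interval (the associahedral summands forming a subfamily of the cyclohedral ones). Writing $A,B$ for the two fat blocks, the common lineality $L_\S\cap L_\T$ equals the root subspace spanned by the differences supported on $A\cap B$, so it is automatically a root subspace; equivalently the undirected part of $G_{\S,\T}$ is just the two cliques on $A$ and $B$, and any cycle alternating between them lies entirely in $A\cap B$ and hence has a chord, so the lineality test of Lemma~\ref{lem:cycles} never fails. For the pointed part I would check directly that no violating cycle exists: the directed edges of $G_\S$ and of $G_\T^{\op}$ are oriented consistently with the one ambient order $(1,\dots,n)$, so a simple alternating cycle with two separated upper path segments cannot close up. Since restrictions of single-interval coarsenings are again single-interval coarsenings, this is precisely the local statement that Theorem~\hyperref[thm:B]{B} isolates on faces of dimension at most five.

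For $\hat D_n$ both reductions still apply, but the simplification is lost: the fat block of $\hat\S_{s,t}$ is $[s,t]\cup\{n\}$, which is \emph{not} a cyclic interval of $(1,\dots,n)$ once $s>1$, and for varying $(s,t)$ these partitions do not refine one common cyclic order. I would therefore localise the obstruction at the point $n$. Deleting $n$ turns $\hat\S_{s,t}$ into the coarsening $\S_{s,t}=([s,t],t+1,\dots,s-1)$ of $(1,\dots,n-1)$, which is of the single-interval type already handled; moreover $G_{\hat\S_{s,t},\hat\S_{s',t'}}$ restricted to $[n-1]$ equals $G_{\S_{s,t},\S_{s',t'}}$ (the vertex $n$ is interior to each fat block, so it is not a block-minimum and its removal does not alter the directed edges). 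Hence any two such restrictions are compatible, and every violating cycle in $G_{\hat\S_{s,t},\hat\S_{s',t'}}$ must pass through $n$; restricting such a cycle and invoking Theorem~\hyperref[thm:B]{B} reduces the problem to subsets $I$ of size at most $6$ containing $n$, where $n$ sits at the end of the single fat block of each partition. It then remains to rule out, by a finite case analysis on the relative positions of $s,t,s',t',n$, that these configurations ever realise a $4$- or $6$-interlacing of Theorem~\ref{thm:interlaced}, equivalently a violating cycle through $n$. I expect this final case analysis to be the main obstacle, since all the combinatorial content is concentrated in the single displaced position of $n$: the cleanest route is probably to argue on $G_{\hat\S_{s,t},\hat\S_{s',t'}}$ directly and show that any candidate alternating cycle through $n$ is forced—exactly as in the cyclic-interval case, once the placement of $n$ is taken into account—either to repeat a vertex or to acquire a chord from one of the two cliques, so that no violating cycle survives.
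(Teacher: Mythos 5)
Your overall skeleton is the same as the paper's: reduce to pairwise compatibility via Theorem~\hyperref[thm:A]{A}, then use Theorem~\hyperref[thm:B]{B} together with the observation that the relevant families of ordered set partitions are closed under restriction (restrictions of cyclohedral summands are again cyclohedral; restrictions of $\hat D_n$ summands are cyclohedral or of $\hat D$-type according to whether $n$ lies in the subset). The paper finishes from here in one stroke: by Theorem~\hyperref[thm:B]{B} everything reduces to the statements that $C_6$ and $\hat D_6$ are alcoved, which are finite, directly checkable facts about two concrete polytopes. That finite check is the entire remaining content of the proof, and it is exactly the content your proposal does not supply.

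Concretely, two steps in your argument are gaps rather than proofs. First, for $C_n$ your justification of the pointed part --- that the directed edges are ``oriented consistently with the one ambient order, so a simple alternating cycle with two separated upper path segments cannot close up'' --- is an assertion, not an argument: both $G_{\S}$ and $G_{\T}^{\op}$ contain wrap-around edges of the cyclic order, so alternating cycles can perfectly well close up in general (this is precisely how the $4$-interlacing cycle $1 \upto 2 \downto 3 \upto 4 \downto 1$ arises); excluding them for interval coarsenings requires either a genuine combinatorial argument or the finite verification of $C_6$ that Theorem~\hyperref[thm:B]{B} makes sufficient. Second, and more seriously, for $\hat D_n$ you yourself flag the final case analysis (ruling out $4$- and $6$-interlacings of \Cref{thm:interlaced} among restrictions containing $n$) as ``the main obstacle'' and only sketch how one would ``probably'' argue. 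That case analysis is exactly the verification that $\hat D_6$ is alcoved, i.e., the base case carrying the new assertion of the theorem, so the proof of the statement the paper highlights as novel is left open. Your localisation at the vertex $n$ --- deleting $n$ turns the summands into cyclohedral-type partitions of $[n-1]$ without changing any directed edges, so every violating cycle must pass through $n$ --- is correct and a nice observation, but it only reduces the problem; it does not close it.
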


\begin{proof}
    Since the  associahedron is a Minkowski summand of the cyclohedron, it is enough to show that $C_n$ and $\hat{D}_n$ are alcoved. Using Theorem~\hyperref[thm:A]{A} it is enough to show that any pair of simplices in the corresponding Minkowski sum is compatible.
    
    Let us start with $C_n$ and let $\S,\S'$ be two ordered set partitions as in \Cref{cor:cyclohedron}. Notice that for any subset $A\subset [n]$ of size $6$, the restrictions $\S|_A,\S'|_A$ again have the form of \Cref{cor:cyclohedron}. Thus by Theorem~\hyperref[thm:B]{B} the compatibility of $\S,\S'$ follows from the fact that $C_6$ is alcoved, which can be checked directly.

    The argument for $\hat{D}_n$ is similar. For $\S,\S'$ of the form \eqref{eq:dhat}, their restrictions  $\S|_A,\S'|_A$ to a subset $A\subset [n]$ of size $6$ are of the form \eqref{cor:cyclohedron} if $n\notin A$ and of the form \eqref{eq:dhat} if $n\in A$. So the compatibility of $\S,\S'$ follows from the fact that $C_6$ and $\hat{D}_6$ are alcoved.
\end{proof}

Recently, Bossinger, Telek and Tillmann-Morris introduced the so-called pellytopes, another class of interesting polytopes related to particle physics~\cite{pellytopes}.
\begin{definition}\label{def:pellytope}
	For $n\ge 1$ the \emph{pellytope} $\mathcal{P}_n$ is defined as
	\[
	\mathcal{P}_n = \text{Newt}\left(\prod_{i=1}^{n}(1+y_i)\prod_{j=1}^{n-1}(1+y_j+y_jy_{j+1})\right).
	\]
\end{definition}
The authors prove that $\mathcal{P}_n$ is a $n$-dimensional polytope with $3n-1$ facets and its number of vertices is given by Pell's number $p_n$, defined recursively by
\[
p_1 =1, \quad p_2=2, \quad \text{and} \quad p_n=2p_{n-1}+p_{n-2}.
\]
Their main result establishes that $\mathcal{P}_n$ determines a \emph{binary geometry} given by $u$-equations.

\Cref{def:pellytope} presents the pellytopes as a Minkowski sum of  simplices which are Newton polytopes.
Using~\Cref{prop:newton} we can obtain a slight transformation of the pellytopes as Minkowski sums of alcoved simplices.
Following~\Cref{prop:alcoved_transformation}, define the pellytope~$\hat{\mathcal{P}_n}$ as the Minkowski sum over all alcoved simplices $\Delta_\S$ where $\S$ is a coarsening of the standard order $(1,2,\dots,n+1)$ containing at most one block with more than one element such that $n+1$ is in this block and containing at most three blocks in total.
Using our techniques we can immediately deduce that $\hat{\mathcal{P}_n}$ is alcoved.

\begin{corollary}
The pellytope $\hat{\mathcal{P}_n}$ is alcoved.
\end{corollary}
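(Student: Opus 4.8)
The plan is to mirror the proof of Theorem~\ref{thm: assocCycDHat}. By Theorem~\hyperref[thm:A]{A} it suffices to show that any two of the alcoved simplices $\Delta_\S$ occurring in the Minkowski decomposition of $\hat{\mathcal{P}_n}$ are compatible; once pairwise compatibility is in hand, Theorem~\hyperref[thm:A]{A} immediately yields that the whole sum is alcoved. The structural observation driving the argument is that the ordered set partitions $\S$ indexing the pellytope simplices form a \emph{subcollection} of those indexing the cyclohedron $C_{n+1}$. Indeed, each such $\S$ is by definition a coarsening of the standard order $(1,2,\dots,n+1)$ with at most one block of size greater than one, which is precisely the defining condition in \Cref{cor:cyclohedron} for the cyclohedron. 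The additional requirements that $n+1$ lie in the large block and that there be at most three blocks in total only cut the collection down further and play no role in the alcoved property.

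First I would record that the simplices in the decomposition of $C_{n+1}$ are pairwise compatible, which is exactly what the proof of Theorem~\ref{thm: assocCycDHat} establishes: for any two cyclohedron-type partitions $\S,\S'$ and any $A\subseteq[n+1]$ with $|A|\le 6$, the restrictions $\S|_A$ and $\S'|_A$ are again of cyclohedron type on $A$, so by Theorem~\hyperref[thm:B]{B} their compatibility reduces to the single directly verified fact that $C_6$ is alcoved. Since every pellytope partition is in particular a cyclohedron partition, and since a subcollection of pairwise compatible simplices is trivially pairwise compatible, this same computation shows that any pair of pellytope partitions is compatible. Theorem~\hyperref[thm:A]{A} then concludes that $\hat{\mathcal{P}_n}=\sum_\S \Delta_\S$ is alcoved.

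The step I would regard as the main (though modest) obstacle is the bookkeeping that identifies the pellytope simplices with a subfamily of the $C_{n+1}$ simplices, together with checking that the relevant condition survives restriction. Concretely, one must observe that ``at most one block of size larger than one'' is stable under restriction to a subset $A$, since a singleton block can only restrict to a singleton or to the empty set; this is what makes the reduction to $C_6$ via Theorem~\hyperref[thm:B]{B} go through verbatim. Notably, no base case beyond $C_6$ needs to be treated, in contrast to the $\hat{D}_n$ case in Theorem~\ref{thm: assocCycDHat} where the auxiliary polytope $\hat{D}_6$ also had to be verified directly.
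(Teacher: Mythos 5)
Your proof is correct and follows essentially the same route as the paper: the paper's one-line argument observes that $\hat{\mathcal{P}_n}$ is a subsum of the associahedron's simplices and invokes Theorem~\ref{thm: assocCycDHat} together with Theorem~\hyperref[thm:A]{A}, which is exactly the pairwise-compatibility-of-a-subcollection argument you spell out. Your only cosmetic deviation is routing through the cyclohedron $C_{n+1}$ rather than the associahedron $A_{n+1}$, which is immaterial since the paper itself proves the associahedron case via the cyclohedron.
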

\begin{proof}
	This follows immediately from~\Cref{thm: assocCycDHat} as $\hat{\mathcal{P}_n}$ is a subsum of the simplices that yield the associahedron.
\end{proof}

The same proof applies to the natural generalizations that arise by summing all such alcoved simplices containing up to $k$ blocks in total.
The case $k=3$ is the pellytope and the case $k=n+1$ is the associahedron.
It seems to us an interesting question for future research to determine whether these ``higher pellytopes'' are binary geometries too.

\section{Polymatroidal Blade Arrangements and the Dressian}\label{sec:blades}
In this last section, we interpret our results in the context of blades and matroidal subdivisions \cite{Ear22}.  Blade arrangements provide an elegant generalization of tropical hyperplane arrangements, in which every maximal face belong to a much larger class of polytopes than is the case for tropical hyperplane arrangements, where all faces are alcoved polytopes.
\begin{definition}[\cite{early2018honeycomb,Ear22}]
Recall that we denote by $\Sigma_\S$ the inner normal fan of the alcoved simplex $\Delta_{\mathbf{S}}$.
Let $(\S)$ be the codimension-1 skeleton of the fan $\Sigma_\S$.
Given any point $v\in \mathcal{H}_n$, let $(\mathbf{S})_v$ be the translation of $(\mathbf{S})$ from the origin to the point $v$.
We call the pair $(\S)_v$ a \emph{blade}.
 A \emph{blade arrangement} is a collection $(\mathbf{S}_1)_{v_1},\ldots, (\mathbf{S}_d)_{v_d}$ of blades on the lattice points $v_1,\ldots, v_d$.
\end{definition}
  As a special case, tropical hyperplane arrangements arise when a single cyclic order is fixed.  We are interested in allowing all possible ordered set partitions at all possible locations and placing constraints on the maximal faces of the resulting chambers.

\begin{definition}[\cite{Ear22,Cachazo2020PlanarKC}]
A blade arrangement is said to be \emph{permutohedral} provided every cell in the superposition is a generalized permutohedron. 
If in addition $v_1,\ldots, v_d$ are integer vectors, then the arrangement is called \emph{polymatroidal}.
\end{definition}
This definition prompts the main question of this section.

\begin{question}\label{question: arrangements}
	Which blade arrangements $(\mathbf{S}_1)_{v_1},\ldots, (\mathbf{S}_t)_{v_t}$ are polymatroidal?
\end{question}
A first case will be to consider the case when all $\mathbf{S}$ are cyclic orders, in which case our compatibility criterion is most powerful.  Permutohedral and especially polymatroidal blade arrangements provide a direct generalization of the notion of a tropical hyperplane arrangement.

Question \ref{question: arrangements} is motivated by challenges coming from physics, for the construction and classification of Generalized Feynman Diagrams for amplitudes arising in the CEGM framework \cite{CEGM2019}.  In this theory, a rational function, the generalized biadjoint scalar amplitude, is constructed by summing the integral Laplace transforms of the maximal cones in the tropical Grassmannian; consequently, singularities of this rational function are dual to certain realizable tropical Pl\"{u}cker vectors.  The main idea which we propose here is that \textit{any indecomposable alcoved polytope should give rise to a simple pole} of the amplitude, and the issue of compatibility of indecomposable alcoved polytopes enters when asking about its possible \textit{overlapping singularities}.

The first author gave a complete answer to Question~\ref{question: arrangements} in the case of the blade of the standard order put at different vertices of a hypersimplex $\Delta_{k,n}$.
\begin{theorem}[\cite{Ear22}]
	An arrangement of the standard blade $((1,2,\ldots, n))$ on the vertices $e_{J_1},\ldots, e_{J_d} \in \Delta_{k,n}$ induces a matroid subdivision if and only if $e_{J_i} - e_{J_j}$ alternates sign exactly twice for each $i\not=j$.
\end{theorem}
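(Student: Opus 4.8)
The plan is to reduce matroidality of the whole arrangement to a condition on pairs of placements and then to read that pairwise condition off the cone calculus of \Cref{sec:root_cones}. Recall the classical edge criterion: a subdivision of $\Delta_{k,n}$ is matroidal precisely when every edge of every cell is parallel to a root $e_i-e_j$, that is, when every maximal cell is an alcoved polytope. By definition the blade arrangement subdivision is the common refinement of the local wall decompositions cut out by the translated skeleta $(\S)_{e_{J_1}},\dots,(\S)_{e_{J_d}}$ with $\S=(1,2,\dots,n)$, and $(\S)$ is the codimension-one skeleton of the normal fan $\Sigma_\S$. I would first record that a single standard blade at any vertex induces a matroid subdivision, which is the base case; consequently every non-root edge appearing in the refinement must already be produced by the interaction of two blades.

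For the pairwise step I fix two placements at $e_J$ and $e_{J'}$. At any point of the refinement the tangent cone of the common cell is the intersection $\sigma\cap\tau$, where $\sigma$ is a cone of $\Sigma_\S$ translated to $e_J$ and $\tau$ a cone of $\Sigma_\S$ translated to $e_{J'}$; since both are root cones I may apply the intersection graph $\Gamma_{\sigma,\tau}$ together with \Cref{cor:pointed_cones_comp} (respectively \Cref{thm:cone_criterion} in the non-pointed case). The combinatorial heart is to prove that $\Gamma_{\sigma,\tau}$ contains a primitive alternating cycle of length at least four for some such pair $(\sigma,\tau)$ if and only if the sign sequence of $e_J-e_{J'}$, read cyclically along $1,2,\dots,n$, changes sign more than twice; in that case the forbidden cycle yields a genuine non-root edge, hence a non-matroid cell in the two-blade subdivision. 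Concretely, a displacement whose positive coordinates form a single cyclic arc and whose negative coordinates form the complementary arc keeps every relevant intersection a root cone, whereas an interleaved pattern $\dots+\dots-\dots+\dots-$ produces exactly a length-four alternating cycle; the pair $e_{13},e_{24}$ against the pair $e_{14},e_{23}$ in $\Delta_{2,4}$ is the smallest instance of this dichotomy.

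To globalize I would mirror the proof of Theorem~\hyperref[thm:A]{A}. An edge (or lower-dimensional face) of a cell of the full refinement has linear span equal to $\bigcap_s \Span(\sigma_s)$, the intersection of the spans of the contributing translated cones, one from each blade passing through that face. By the pairwise step each $\Span(\sigma_s)\cap\Span(\sigma_t)$ is a root subspace, so Proposition~\ref{prop:alcovedspaces} forces $\bigcap_s \Span(\sigma_s)$ to be a root subspace; hence every edge is a root and the subdivision is matroidal, giving the ``if'' direction. The ``only if'' direction is immediate from the pairwise analysis and the base case: the full arrangement refines any two-blade sub-arrangement, and a refinement of a non-matroidal subdivision is again non-matroidal, since a non-root edge of a cell is covered by collinear, hence non-root, edges of the refinement.

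The step I expect to be the main obstacle is the pairwise geometric bookkeeping. Because the two blades sit at distinct vertices rather than sharing an apex, their walls are mutually translated, and one must verify that the combinatorics of which walls cross is governed solely by the displacement $e_J-e_{J'}$ and not by the absolute positions of the vertices in $\Delta_{k,n}$. Translating the clean condition ``$e_J-e_{J'}$ changes sign exactly twice'' into the precise absence of a length-four primitive alternating cycle in $\Gamma_{\sigma,\tau}$ is the crux, and it is exactly here that the cyclic---rather than linear---nature of the order $(1,\dots,n)$ is indispensable, since a set such as $\{1,4\}$ is a cyclic interval while $\{1,3\}$ is not.
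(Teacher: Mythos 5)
The paper itself does not prove this statement at all --- it is imported verbatim from \cite{Ear22} --- so there is no internal proof to compare against, and your argument has to stand on its own. Its global scaffolding does stand: the direction space of an edge of the common refinement equals $\bigcap_s \Span(C_s)$ over the minimal translated faces $C_s$ containing it, so Proposition~\ref{prop:alcovedspaces} correctly upgrades pairwise root-ness to root-ness of the full intersection, and your ``only if'' reduction is also fine, since a refinement of a subdivision with a non-root edge still has cells with collinear, hence non-root, edges. (One slip worth flagging: ``every edge is a root'' characterizes generalized permutohedra, not alcoved polytopes; the paper stresses in \Cref{sec:blades} that cells of blade arrangements form a strictly larger class than alcoved polytopes. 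Your proof only ever uses the edge criterion, so this conflation does not derail the argument, but the parenthetical ``that is, alcoved'' is wrong.)

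The genuine gap is the pairwise case, which you yourself flag as the ``crux'' and then do not prove: the equivalence between ``$e_{J_i}-e_{J_j}$ changes sign exactly twice cyclically'' and matroidality of the two-blade subdivision. This is not deferred bookkeeping --- it is the entire content of the cited theorem, and it sits exactly where the machinery you invoke stops applying. \Cref{cor:pointed_cones_comp} and \Cref{thm:cone_criterion} characterize when $\sigma\cap\tau$ is a root cone for two cones \emph{with a common apex}; here the two copies of $\Sigma_{(1,\ldots,n)}$ are translated by $e_{J_i}-e_{J_j}\neq 0$, so the relevant objects are affine intersections $(\sigma+e_{J_i})\cap(\tau+e_{J_j})$, and one must determine which pairs of faces meet at all, and along what, as a function of the displacement; none of this can be read off $\Gamma_{\sigma,\tau}$ alone. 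Your claimed dichotomy (contiguous sign pattern keeps every realized span-intersection a root subspace, interleaved pattern ``activates'' a length-four alternating cycle) is precisely the assertion that needs proof, and the single example of $e_{13}$ versus $e_{24}$ in $\Delta_{2,4}$ does not establish either implication for general $k$, $n$, and displacement. In addition, cells of the induced subdivision of $\Delta_{k,n}$ acquire faces along the boundary $x_i\in\{0,1\}$, whose normals are not roots; the unnumbered Proposition following \Cref{lem: hypersimplex tiling} checks that these cuts create no new edge directions for a \emph{single} blade, but your pairwise step needs (and does not supply) the analogous verification for two translated blades. As written, the proposal reduces the theorem to its hardest part and leaves that part unproven.
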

For example, $((1,2,3,4))_{e_1+e_3},((1,2,3,4))_{e_2+e_4}$
is a blade arrangement on which is not (poly) matroidal.  This is because it fully triangulates the octahedron $\Delta_{2,4}$ defined by $x_1+x_2+x_3+x_4=2$ with $0\le x_j\le 1$ into four tetrahedra with a common non-root edge.

The connection to alcoved polytopes arises via the following Lemma and Proposition.
\begin{lemma}[\cite{early2016combinatorics}]\label{lem: hypersimplex tiling}
    Fix $n\ge 2$.  The hyperplane $\mathcal{H}_n$ is tiled by integer translates of hypersimplices $\Delta_{k,n}$ for $k=1,\ldots, n-1$.  For a fixed lattice point $v\in \mathcal{H}_n$, there are exactly $\binom{n}{k}$ translated copies of $\Delta_{k,n}$ having $v$ as a vertex.
\end{lemma}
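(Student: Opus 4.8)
The plan is to exhibit the tiling explicitly through the \emph{integer part / fractional part} decomposition of a point, and then read off the local structure at a lattice point from this description. For a point $p=(p_1,\dots,p_n)\in\mathcal{H}_n$ I would write $p_i=\lfloor p_i\rfloor+\{p_i\}$ with $\{p_i\}\in[0,1)$, and set $t=(\lfloor p_1\rfloor,\dots,\lfloor p_n\rfloor)\in\Z^n$ and $y=(\{p_1\},\dots,\{p_n\})$. Since $\sum_i p_i=0$ and each $\{p_i\}\in[0,1)$, the integer $k:=\sum_i\{p_i\}=-\sum_i\lfloor p_i\rfloor$ lies in $\{0,1,\dots,n-1\}$, so $y$ lies in the hypersimplex $\Delta_{k,n}\subset\{\sum_i x_i=k\}$ and $p=y+t$ exhibits $p$ as a point of the integer translate $\Delta_{k,n}+t$ with $\sum_i t_i=-k$. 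This already shows that the integer translates $\Delta_{k,n}+t$ with $t\in\Z^n$, $\sum_i t_i=-k$, and $1\le k\le n-1$ cover $\mathcal{H}_n$; the degenerate values $k=0,n$ only contribute lattice points and no full-dimensional tile.

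For disjointness of interiors I would check that the correspondence $p\mapsto(t,y)$ is a bijection on the generic locus. If $p$ has no integer coordinate, then $\{p_i\}\in(0,1)$ for all $i$, so $\lfloor y_i+t_i\rfloor=t_i$ forces $\lfloor p\rfloor=t$ and $\{p\}=y$ to be uniquely determined; equivalently, the relative interior of each tile $\Delta_{k,n}+t$ consists exactly of the points whose fractional-part vector lies in the relative interior of $\Delta_{k,n}$, and these loci are pairwise disjoint as $t$ varies. Hence the closed translates cover $\mathcal{H}_n$ with pairwise disjoint relative interiors, which is the asserted tiling.

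For the second assertion I would first note that the tiling is invariant under translation by the lattice $\Z^n\cap\mathcal{H}_n$: shifting $\Delta_{k,n}+t$ by $w\in\Z^n\cap\mathcal{H}_n$ yields $\Delta_{k,n}+(t+w)$, again a tile of the same family. It therefore suffices to count the tiles having the origin as a vertex. The vertices of $\Delta_{k,n}$ are the $0/1$ vectors $e_S:=\sum_{i\in S}e_i$ with $|S|=k$, so a translate $\Delta_{k,n}+t$ has $0$ as a vertex if and only if $t=-e_S$ for some $S\subseteq[n]$ with $|S|=k$. Distinct subsets give distinct translation vectors, hence distinct tiles, while each $t=-e_S$ indeed satisfies $\sum_i t_i=-k$ and belongs to the tiling; this yields exactly $\binom{n}{k}$ copies of $\Delta_{k,n}$ meeting a fixed lattice point as a vertex.

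The two covering and uniqueness bookkeeping steps are routine; \textbf{the only point demanding care is the boundary behavior}, namely confirming that points with some integer coordinate lie only on the common lower-dimensional faces of several tiles rather than in the relative interior of any single tile, so that the decomposition genuinely produces a face-to-face tiling and the vertex count is not corrupted by coincidences. This is again handled by the fractional-part analysis: a coordinate $p_i\in\Z$ means $\{p_i\}\in\{0,1\}$ inside any containing tile, so $p$ lies on the facet $\{x_i=t_i\}$ or $\{x_i=t_i+1\}$, and all such points are thereby accounted for on proper faces.
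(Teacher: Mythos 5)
Your proof is correct. Note, however, that the paper does not actually prove this lemma: it is imported by citation from \cite{early2016combinatorics}, so there is no internal argument to compare against. Your fractional-part decomposition $p = \lfloor p\rfloor + \{p\}$ is the standard and natural route: it gives covering (with the correct observation that $k=0$ occurs only at lattice points, which are then picked up as vertices of neighboring tiles), disjointness of relative interiors via uniqueness of $(t,y)$ on the locus of points with no integer coordinate, and the vertex count by reducing to $v=0$ via invariance under $\Z^n\cap\mathcal{H}_n$ and identifying the tiles through $0$ with the translates $\Delta_{k,n}-e_S$, $|S|=k$. One cosmetic remark: the case $k=n$ you flag as degenerate never occurs, since all fractional parts are strictly less than $1$; and your final paragraph establishes that boundary points lie on proper faces (which is all the lemma needs), though the full face-to-face property of the tiling is asserted rather than proved.
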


\begin{proposition}
	Let $P$ be an alcoved polytope and let $\Sigma_P$ be its normal fan.
	Choose any one of the (translated) hypersimplices $\Delta_{k,n}$ neighboring the origin provided by Lemma \ref{lem: hypersimplex tiling}; after translation, the origin defines a unique vertex $e_J \in \Delta_{k,n}$. 
	Denote by~$(\Sigma_P)_{e_J}$ the polyhedral fan obtained by translating $\Sigma_P$ from the origin to $e_J$.  Then $(\Sigma_P)_{e_J} \cap \Delta_{k,n}$ defines a matroid subdivision.  In particular, it induces a subdivision of the vertex figure, the Cartesian product $\Delta_{k-1,k}\times \Delta_{1,n-k}$.
\end{proposition}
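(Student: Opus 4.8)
The plan is to verify matroidality through the classical edge criterion: a polyhedral subdivision of $\Delta_{k,n}$ is a matroid subdivision precisely when every edge of every cell is parallel to a root $e_i-e_j$. The first step is to understand the walls (codimension-one cones) of $\Sigma_P$. Since $P$ is alcoved, every ray of $\Sigma_P$ is a root, so the linear span of any cone of $\Sigma_P$ is a root subspace; for a wall this span is a hyperplane in $\cH_n$ that is spanned by roots, and by \Cref{lem:cliques} its graph has exactly two clique components $A$ and $A^c$, so the span equals $\cH_n\cap\{\sum_{i\in A}x_i=0\}$, an all-subset hyperplane. Translating the apex to the lattice vertex $e_J$ turns each wall into a piece of $\{\sum_{i\in A}x_i=|A\cap J|\}$ with integer right-hand side, and the facets $\{x_i=0\}$ and $\{x_i=1\}$ of $\Delta_{k,n}$ have the same shape. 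Thus every maximal cell of the subdivision is cut out by all-subset inequalities with integer constants together with $\sum_i x_i=k$.

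The heart of the argument is to show that each maximal cell $K_w=(e_J+\sigma_w)\cap\Delta_{k,n}$, where $\sigma_w$ is the normal cone of $P$ at a vertex $w$, is a matroid polytope. I would use that $\sigma_w$ is a root cone, so by \Cref{lem:transitively_closed} its graph $\Gamma_{\sigma_w}$ is transitively closed and encodes a preposet on $[n]$. The facet normals of $\sigma_w$ are then the indicator vectors $\mathbf{1}_A$ of the order filters $A$ of this preposet, and the order filters of a preposet are closed under union and intersection. Consequently the inequalities defining $K_w$ --- the translated facets, of the form $\sum_{i\in A}x_i\le|A\cap J|$ for $A$ ranging over a family closed under union and intersection, together with the box constraints and $\sum_i x_i=k$ --- form a submodular system, presenting $K_w$ as a translate of a polymatroid base polytope. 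Such a polytope is a generalized permutohedron, hence has only root-directed edges, and being contained in $\Delta_{k,n}$ with integral vertices it is a matroid polytope. Since faces of matroid polytopes are again matroid polytopes, the whole subdivision is then matroidal.

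The main obstacle is precisely this cell-by-cell step: the edges that must be controlled are not those along the rays of $\sigma_w$ (these are roots because $P$ is alcoved) but the new edges created where a facet of $\Delta_{k,n}$ meets a wall of $\sigma_w$, and it is the submodular ring-family structure above that keeps them root-directed. The single-apex hypothesis is essential here, since it forces all cutting hyperplanes through one lattice point and makes the constants $|A\cap J|$ coherent; by contrast, for the octahedron $\Delta_{2,4}$ carrying the two blades $((1,2,3,4))_{e_1+e_3}$ and $((1,2,3,4))_{e_2+e_4}$ the two apexes conspire to realise the long diagonal $[e_{13},e_{24}]$ as an edge and thereby destroy matroidality. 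An alternative route to the same conclusion would be to reduce matroidality to the octahedral two-faces of $\Delta_{k,n}$ and to check that $(\Sigma_P)_{e_J}$ never slices such a face along a long diagonal, which by restriction becomes a small local verification in the spirit of Theorem~\hyperref[thm:B]{B}. Finally, the vertex-figure statement follows immediately: the tangent cone of $\Delta_{k,n}$ at $e_J$ is the cone over $\Delta_{k-1,k}\times\Delta_{1,n-k}$, so the cones of $\Sigma_P$ meeting a neighbourhood of the apex cut this vertex figure into the induced subdivision.
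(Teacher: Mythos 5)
Your proposal is correct in substance, but it proves the key claim by a genuinely different mechanism than the paper. The paper's own proof is very short: it notes that each cell is a cone of $(\Sigma_P)_{e_J}$ (a root cone) cut by the $2n$ box inequalities $0\le x_j\le 1$, and then simply \emph{asserts} that this intersection introduces no new edge directions, so every cell is a matroid polytope. Your argument supplies the structural reason behind precisely that assertion: you dualize each normal cone $\sigma_w$ (transitively closed by Lemma~\ref{lem:transitively_closed}, hence a preposet) into the order-filter inequalities $\sum_{i\in A}x_i\le|A\cap J|$, observe that order filters form a ring family on which $A\mapsto|A\cap J|$ is modular, and invoke submodular/polymatroid theory to conclude that each cell is an integral base polytope, hence has only root-directed edges. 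What the paper's route buys is brevity; what yours buys is an actual proof of the geometric assertion the paper leaves unargued, plus an explicit identification of the cells as polymatroid base polytopes. Two points should be tightened. First, the order-filter inequalities together with the box constraints do \emph{not} literally form one ring family (singletons are not closed under union), so ``form a submodular system'' is loose as stated; the clean fix is the standard convolution theorem (Fujishige): the intersection of the base polyhedron of an integer-valued submodular function on a ring family with an integer box is again an integral base polyhedron, after which the root-edge characterization of base polyhedra and the GGMS criterion finish the cell-by-cell step. Second, not every order filter of the preposet is a facet normal of $\sigma_w$ (only suitably connected ones are), but since every filter inequality is valid on $\sigma_w$, listing all of them is a harmless redundancy and your ring-family argument goes through unchanged. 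Your concluding tangent-cone remark covers the vertex-figure statement, which the paper itself states without argument.
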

\begin{proof}
	We need to show that no new directions beyond the existing $e_i-e_j$ are introduced after intersecting $(\Sigma_P)_{e_J}$ with $\Delta_{k,n}$.  But, given any cone $C\in (\Sigma_P)_{e_J}$ it follows that $C\cap \Delta_{k,n}$ is cut out by $2n$ additional inequalities of the form $0\le x_j\le 1$ for $j=1,\ldots, n$.  Note that some cones will have higher codimension intersections with $\Delta_{k,n}$.  This intersection does not introduce any new edge directions, hence every polytope in the subdivision is a matroid polytope.  
\end{proof}

In this language, the results of this paper give a criterion to detect polymatroidal blade arrangements for different blades all at one point.
The next question is to combine the two constructions, by varying both the cyclic orders $\mathbf{S}_j \in\osp(n)$ and the translations $v_j \in \cH_n$.
We hope to return to this question in future research.

     \begin{figure}[h!t]
	\centering
	\includegraphics[width=1\linewidth]{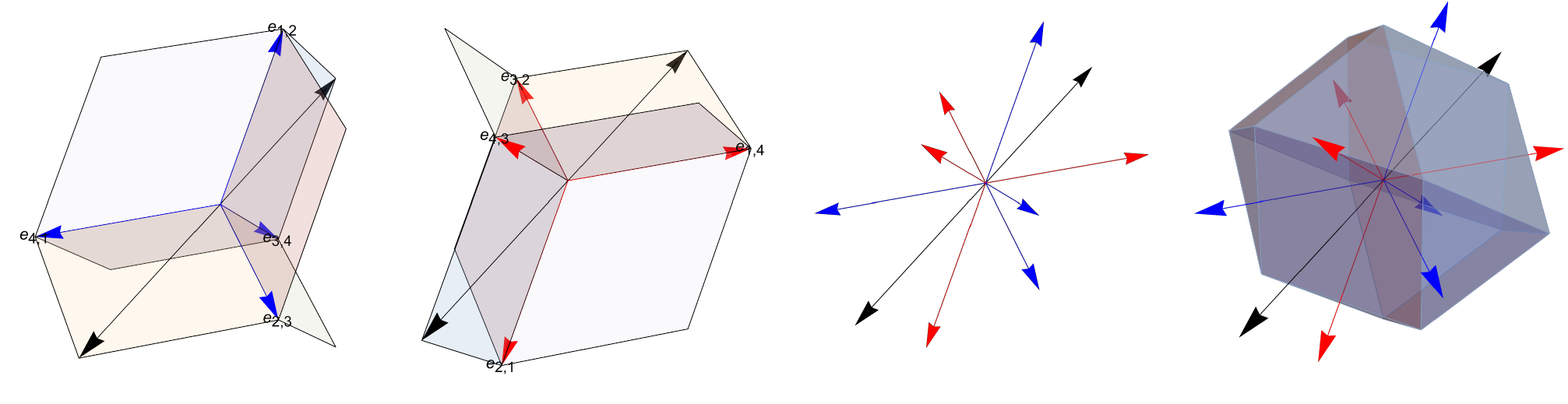}
	\caption{Left: the blade given by $((1,2,3,4))$.  Middle: the blade given by $((3,2,1,4))$.  Their common intersection is the one-dimensional subspace spanned by $e_1-e_2+e_3-e_4$; it is depicted as the black arrows.  Right: the superposition, i.e. the normal fan to the Minkowski sum of the two alcoved simplices.  Modulo a linear transformation, the Newton polytope shown is the root polytope, the convex hull of all roots $e_i-e_j$.}
	\label{fig:incompatibleBladesdim3}
\end{figure}

\begin{theorem}
    Suppose that $\mathbf{S}_1,\ldots, \mathbf{S}_d \in \osp(n)$ is a pairwise compatible collection of ordered set partitions. 
    Then the blade arrangement $(\S_1)_v,\dots (\S_d)_v$ is polymatroidal for any lattice point $v\in \cH_n$.
    Moreover, choosing $v$ to be a vertex of the hypersimplex $\Delta_{k,n}$ this induces a compatible collection of (coarsest) multi-split matroid subdivisions of $\Delta_{k,n}$, and in particular this yields a cone in the Dressian $\text{Dr}(k,n)$.
\end{theorem}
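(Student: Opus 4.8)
The plan is to reduce the entire statement to Theorem~\hyperref[thm:A]{A} together with the Proposition relating alcoved polytopes to matroid subdivisions of hypersimplices. Since $\S_1,\dots,\S_d$ are pairwise compatible, Theorem~\hyperref[thm:A]{A} guarantees that the Minkowski sum $P:=\Delta_{\S_1}+\dots+\Delta_{\S_d}$ is again alcoved. By definition of a blade, the superposition of $(\S_1)_v,\dots,(\S_d)_v$ placed at the common point $v$ is the codimension-one skeleton of the common refinement of the normal fans $\Sigma_{\S_1},\dots,\Sigma_{\S_d}$; this common refinement is exactly $\Sigma_P$ translated to $v$. Thus the whole arrangement is governed by the single alcoved polytope $P$.

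To show the arrangement is polymatroidal, I would apply the Proposition on $(\Sigma_P)_{e_J}\cap\Delta_{k,n}$ to this $P$: for every hypersimplex tile $\Delta_{k,n}$ from \Cref{lem: hypersimplex tiling} having $v$ as a vertex, the intersection $(\Sigma_P)_v\cap\Delta_{k,n}$ is a matroid subdivision. Hence every cell of the superposition inside such a tile is a matroid polytope, i.e.\ a generalized permutohedron; since $v$ is a lattice point and the hypersimplex vertices are $0/1$-vectors, every cell has integer vertices and the arrangement is polymatroidal.

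For the \emph{moreover} part I would fix $v$ to be a vertex $e_J$ of a single hypersimplex $\Delta_{k,n}$. Applying the same Proposition to the single simplex $P=\Delta_{\S_i}$ shows that each individual blade induces a matroid subdivision $(\Sigma_{\S_i})_{e_J}\cap\Delta_{k,n}$; by the blade description of~\cite{Ear22} this is a multi-split, and it is coarsest because $\Delta_{\S_i}$ is Minkowski-indecomposable, so its normal fan admits no nontrivial coarsening. The subdivision induced by $P$ is the common refinement of these $d$ multi-splits and is matroidal by the previous paragraph. In the language of the Dressian, each multi-split is a ray of $\text{Dr}(k,n)$ represented by a tropical Pl\"ucker vector $\pi_i$, the Minkowski sum corresponds to the sum $\pi_1+\dots+\pi_d$, and matroidality of the common refinement says exactly that this sum is again a tropical Pl\"ucker vector. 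Therefore $\pi_1,\dots,\pi_d$ span a cone of $\text{Dr}(k,n)$, which is the asserted compatible collection of multi-splits.

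The main obstacle I anticipate is bookkeeping rather than a genuine difficulty: one must verify that the superposition at a single point really is the normal fan of the Minkowski sum (so that Theorem~\hyperref[thm:A]{A} applies verbatim), and that the Dressian notion of a compatible collection of multi-splits spanning a cone is captured precisely by the sum of tropical Pl\"ucker vectors remaining a tropical Pl\"ucker vector. Once the identification of the superposition with $\Sigma_P$ is in place, the alcoved-to-matroid translation supplied by the Proposition does essentially all of the work, and both assertions are reformulations of the single fact that $P$ is alcoved.
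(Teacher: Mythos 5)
Your proposal is correct and takes essentially the approach the paper intends: the paper states this theorem without a separate proof, treating it as the direct combination of Theorem A (pairwise compatibility implies the Minkowski sum $\Delta_{\S_1}+\dots+\Delta_{\S_d}$ is alcoved) with the preceding Proposition (the translated normal fan of an alcoved polytope intersected with $\Delta_{k,n}$ is a matroid subdivision), which is exactly the reduction you carry out. Your added bookkeeping --- identifying the superposition at a single point with the normal fan of the Minkowski sum, and translating matroidality of the common refinement into the secondary-fan/Dressian language --- fills in precisely the steps the paper leaves implicit.
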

An interesting further question concerns realizability.
\begin{question}
    When are the induced subdivisions realizable, i.e., when are they induced by a realizable tropical Pl\"ucker vector?
\end{question}
According to the explicit construction of the weight vectors in \cite[Section 2.1]{Ear22}, any single blade induces a regular matroid subdivision.  But this is not the case in general for arrangements of more than one blade at different points.

For an example of a blade arrangement which induces a non-realizable tropical Pl\"ucker vector, recall that the Fano matroid polytope is a maximal face of a matroid subdivision that is induced by a non-realizable tropical Pl\"ucker vector in the Dressian~$\text{Dr}(3,7)$.  It is induced by the arrangement of seven affine hyperplanes.  In this case simply a collection of seven compatible two-splits of $\Delta_{3,7}$, induced by (the sum of) the seven tropical Pl\"ucker vectors
$$e_{123},e_{145},e_{167},e_{246},e_{257},e_{347},e_{356} \in \mathbb{R}^{\binom{7}{3}}.$$
As is generally true, such a subdivision can be induced in many different ways by blade arrangements on the vertices of $\Delta_{3,7}$.

\subsection*{Acknowledgments}

The authors would like to thank Federico Ardila, Christian Haase, Thomas Lam, Alex Postnikov, Raman Sanyal, and Benjamin Schr\"{o}ter for fruitful discussions.
The main part of this research was carried out while the authors stayed as Oberwolfach Research Fellows at the Oberwolfach Research Institute for Mathematics.

N.E. was partially supported by the European Union (ERC, UNIVERSE PLUS, 101118787).  Views and opinions expressed are however those of the author(s) only and do not necessarily reflect those of the European Union or the European Research Council Executive Agency. Neither the European Union nor the granting authority can be held responsible for them.
L.K. was partially supported by the DFG -- SFB-TRR 358/1 2023 -- 491392403 and SPP 2458 -- 539866293.
Part of the research was carried out while L.K. was an Erik Ellentuck fellow at the Institute for Advanced Study.
L.M. was partially supported by  DFG SPP 2458 -- 539974215 as well as SNSF grant -- 200021E\_224099.

\renewcommand*{\bibfont}{\small}
\printbibliography

\end{document}